\documentclass[a4paper,10pt]{article}
\usepackage{amsmath,amsfonts,amssymb,amsthm,amsbsy,enumerate,thmtools,verbatim}
\usepackage{fullpage}
\usepackage{hyperref,aliascnt}

\makeatletter
\def\tagform@#1{\maketag@@@{\ignorespaces#1\unskip\@@italiccorr}}
\let\orgtheequation\theequation
\def\theequation{(\orgtheequation)}
\makeatother
\DeclareMathOperator{\Span}{\textrm{Span}}
\providecommand{\cF}{\mathcal{F}}
\providecommand{\cG}{\mathcal{G}}
\providecommand{\cH}{\mathcal{H}}
\DeclareMathOperator*{\EE}{\mathbb{E}}
\providecommand{\VV}{\mathbb{V}}
\providecommand{\RR}{\mathbb{R}}
\providecommand{\normal}{\mathcal{N}}
\providecommand{\rand}{\mathcal{R}}
\providecommand{\pset}[1]{2^{#1}}
\providecommand{\randx}{U(\pset{[n]})}
\providecommand{\charf}[1]{\chi_{#1}}
\providecommand{\cond}[1]{\mathbf{1}_{#1}}
\providecommand{\eqdef}{\triangleq}
\providecommand{\upto}[2]{#1 \pm #2}
\providecommand{\error}{\epsilon_1}
\providecommand{\errub}{\epsilon_0}
\providecommand{\errorh}{\epsilon_2}

\providecommand{\canonicalexample}
{\vspace{-10pt} \[
\begin{aligned}
& \mspace{10mu} \begin{array}{c}
\overbrace{\hphantom{1-\frac{1}{n}\kern2\tabcolsep\cdots\kern1\tabcolsep1-\frac{1}{n}}}^{\frac{n}{2}}
\end{array} \\[-8pt]
&\begin{pmatrix}

1-\frac{1}{n} & \cdots & 1 - \frac{1}{n} & \frac{1}{n}- 1 & \cdots & \frac{1}{n} -1 \\
-\frac{1}{n} & \cdots &- \frac{1}{n} & \frac{1}{n} & \cdots & \frac{1}{n} \\
-\frac{1}{n} & \cdots & -\frac{1}{n} & \frac{1}{n} & \cdots & \frac{1}{n}\\
\vdots && \vdots & \vdots && \vdots \\
-\frac{1}{n} & \cdots & -\frac{1}{n} & \frac{1}{n} & \cdots & \frac{1}{n}
\end{pmatrix}
\end{aligned}
\]}

\newtheorem{theorem}{Theorem}
\newtheorem{conjecture}{Conjecture}
\newtheorem{lemma}{Lemma}
\newtheorem*{classic}{Theorem}
\newtheorem*{remark}{Remark}
\newaliascnt{lemmaa}{lemma}
\newtheorem{corollary}[lemmaa]{Corollary}
\newcommand{\sgn}{\textrm{sgn}}

\newcommand{\remove}[1]{}


\title{A stability result for balanced dictatorships in $S_n$}
\author{David Ellis\footnote{Queen Mary, University of London, UK.}, Yuval Filmus\footnote{University of Toronto, Canada. Research supported by the Canadian Friends of the Hebrew University / University of Toronto Permanent Endowment.}, and Ehud Friedgut\footnote{Weizmann Institute of Science, Israel. Research supported in part by I.S.F. grant 0398246, and BSF grant 2010247.}}
\date{May 2013}
\begin{document}

\maketitle

\begin{abstract}
We prove that a balanced Boolean function on \(S_n\) whose Fourier transform is highly concentrated on the first two irreducible representations of \(S_n\), is close in structure to a dictatorship, a function which is determined by the image or pre-image of a single element. As a corollary, we obtain a stability result concerning extremal isoperimetric sets in the Cayley graph on $S_n$ generated by the transpositions.

Our proof works in the case where the expectation of the function is bounded away from 0 and 1. In contrast, \cite{EFF1} deals with Boolean functions of expectation $O(1/n)$ whose Fourier transform is highly concentrated on the first two irreducible representations of $S_n$. These need not be close to dictatorships; rather, they must be close to a union of a constant number of cosets of point-stabilizers.\\

\textbf{Keywords:} Fourier transform, stability, symmetric group.
\end{abstract}

\section{Introduction}
\subsection{Background}\label{sec:background}
This paper (together with \cite{EFF1} and \cite{EFF3}) is part of a trilogy dealing with stability and `quasi-stability' results concerning Boolean functions on the symmetric group, which are of `low complexity', in a Fourier-theoretic sense.

Let us begin with some notation and definitions that will enable us to present the Fourier-theoretic context of our results. Following this, the paper will be essentially Fourier-free, since Lemmas \ref{lem:TinU} and \ref{lem:TspansU} will translate the relevant Fourier notion into a more combinatorial one.

We write $[n] := \{1,2,\ldots,n\}$, and we let $S_n$ denote the symmetric group on $[n]$. If $i,j \in [n]$, we write $T_{ij} := \{\pi \in S_n:\ \pi(i)=j \}$. We call the $T_{ij}$'s the {\em 1-cosets} of $S_n$, since they are cosets of point-stabilizers. Similarly, for $t>1$, and for two ordered $t$-tuples of distinct elements of $[n]$, $I=(i_1,\ldots, i_t)$ and $J=(j_1,\ldots, j_t)$, we write 
$T_{IJ}:= \{\pi \in S_n:\ \pi(I)=J\}$. We call the $T_{IJ}$'s the {\em $t$-cosets} of $S_n$. If $\mathcal{F} \subset S_n$, we write $\chi_{\mathcal{F}}$ for its characteristic function, i.e.\ the Boolean function on $S_n$ with $\chi_{\mathcal{F}}(\pi)=1$ iff $\pi \in \cF$. Abusing notation slightly, we will often use $T_{ij}$ and $T_{IJ}$ to denote their own characteristic functions.

We say that a Boolean function $f\colon S_n \rightarrow \{0,1\}$ is a {\em dictatorship} if there exists $i \in [n]$ and $X \subset [n]$ such that $f(\pi) = \chi_{\{\pi(i) \in X\}}$ for all $\pi \in S_n$, or $f(\pi) = \chi_{\{\pi^{-1}(i) \in X\}}$ for all $\pi \in S_n$, i.e.\ iff $f$ is determined by the image or the preimage of just one element of $[n]$. It is easy to see that a Boolean function $f$ on $S_n$ is a dictatorship if and only if it is the characteristic function of a disjoint union of $1$-cosets, i.e.\ a sum of $T_{i,j}$'s.

If $f:S_n \to \mathbb{R}$, the {\em Fourier transform} of $f$ at an irreducible representation $\rho$ of $S_n$ is defined by
$$\hat{f}(\rho) = \frac{1}{n!}\sum_{\pi \in S_n} f(\pi)\rho(\pi).$$
Recall that the equivalence classes of irreducible representations of $S_n$ are indexed by partitions of $n$. We refer the reader to \cite{sagan} for background on the representation theory of $S_n$, and to \cite{terras} for background on the Fourier transform on non-Abelian groups.

For any non-negative integer $t$, we let $U_t$ denote the vector space of real-valued functions on $S_n$ whose Fourier transform is supported on irreducible representations indexed by partitions of $n$, whose largest part has size at least $n-t$. (Note that a partition $\lambda$ of $n$ has largest part of size at least $n-t$ if and only if $ \lambda \succeq (n-t,1^t)$, where $\succeq$ denotes the lexicographical order on partitions of $n$.) If $f$ is a real-valued function on $S_n$, we define the {\em degree} of $f$ to be the minimum $t$ such that $f \in U_t$. This is a measure of the complexity of $f$, analogous to the degree of a Boolean function on $\{0,1\}^n$. Indeed, it is precisely the minimum possible total degree of a polynomial in the $T_{ij}$'s which is equal to $f$.

Note that $U_0$ is the space of functions whose Fourier transform is supported on the trivial representation --- i.e., the space of constant functions. The space $U_1$, which is the main subject of this paper, is the space of functions whose Fourier transform is supported on the two irreducible constituents of the permutation representation. As promised, we now de-Fourierize this definition.

First, an easy fact:
\begin{lemma}\label{lem:TinU}
For any $t \in \mathbb{N}$, the indicators of $t$-cosets (the functions $T_{IJ}$), are in $U_t$.
\end{lemma}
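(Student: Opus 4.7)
The plan is to recognize $T_{IJ}$ as a matrix coefficient of a natural permutation representation and then identify the irreducible constituents of that representation.

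Let $M_t$ denote the permutation representation of $S_n$ on the set $X_t$ of ordered $t$-tuples of distinct elements of $[n]$; that is, $M_t$ has basis $\{e_K : K \in X_t\}$, and $S_n$ acts by $\sigma \cdot e_K = e_{\sigma(K)}$. The first step is the observation
\[
T_{IJ}(\sigma) \;=\; \mathbf{1}[\sigma(I)=J] \;=\; \langle \sigma \cdot e_I,\, e_J\rangle,
\]
so $T_{IJ}$ is a matrix coefficient of $M_t$. By the Peter--Weyl theorem for finite groups, the matrix coefficients of $M_t$ lie in the sum of the $\lambda$-isotypic components of the regular representation over those $\lambda$ for which $S^\lambda$ appears in $M_t$; equivalently, the Fourier transform of any such matrix coefficient is supported on those $\lambda$. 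So the task reduces to identifying which Specht modules $S^\lambda$ occur in $M_t$.

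Next I would use the identification $M_t \cong \mathrm{Ind}_{S_{n-t}}^{S_n} \mathbf{1}$, where $S_{n-t}$ is the pointwise stabilizer of $I$; this is immediate because $X_t$ is a transitive $S_n$-set with stabilizer $S_{n-t}$. By Frobenius reciprocity, $S^\lambda$ appears in $M_t$ if and only if the trivial representation of $S_{n-t}$ appears in $\mathrm{Res}^{S_n}_{S_{n-t}} S^\lambda$. Iterating the branching rule for $S_k \supset S_{k-1}$ (which allows the removal of any single corner cell), a trivial summand $(n-t)$ of $S_{n-t}$ is reachable from $\lambda$ precisely when one can strip off $t$ corner cells from $\lambda$ so as to leave the single-row shape $(n-t)$. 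This is possible if and only if $\lambda_1 \geq n-t$, i.e., $\lambda \succeq (n-t,1^t)$ in lex order.

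Combining the two steps yields that the Fourier transform of $T_{IJ}$ is supported on partitions with $\lambda_1 \geq n-t$, so $T_{IJ} \in U_t$. The main technical point is the multiplicity statement for $M_t$; an alternative, equally clean route is Young's rule, which gives $M_t \cong M^{(n-t,1^t)} = \bigoplus_\lambda K_{\lambda,(n-t,1^t)}\, S^\lambda$, and a quick check (every $1$ in a semistandard filling of content $(n-t,1^t)$ must lie in row $1$) shows $K_{\lambda,(n-t,1^t)} > 0$ iff $\lambda_1 \geq n-t$. Either way, the only real obstacle is invoking the correct branching/Kostka fact; the rest is bookkeeping.
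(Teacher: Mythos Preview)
Your argument is correct and is the standard representation-theoretic justification: identifying $T_{IJ}$ as a matrix coefficient of the permutation module $M^{(n-t,1^t)}\cong \mathrm{Ind}_{S_{n-t}}^{S_n}\mathbf{1}$, and then invoking Young's rule (or equivalently Frobenius reciprocity plus the branching rule) to see that its irreducible constituents are exactly the $S^\lambda$ with $\lambda_1\geq n-t$.

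There is nothing to compare against: the paper states this lemma as ``an easy fact'' and gives no proof. Your write-up supplies exactly the background argument that the paper leaves implicit. One very minor point of exposition: your Kostka check that every $1$ must lie in row~$1$ gives $\lambda_1\geq n-t$; you might add one sentence for the converse (e.g.\ fill row~$1$ with the $n-t$ ones and then place $2,\ldots,t+1$ in the remaining cells in reading order, which is automatically semistandard since those entries are distinct). But this is standard and would not be expected in a paper that treats the lemma as folklore.
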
    

Next, a slightly more intricate fact, observed and proved in \cite{EFP}:
\begin{lemma}\label{lem:TspansU}
For any $t \in \mathbb{N}$, the $T_{IJ}$'s span $U_t$.
\end{lemma}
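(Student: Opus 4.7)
Let $W_t := \Span(T_{IJ})$ denote the span of all the $t$-coset indicators. The inclusion $W_t \subseteq U_t$ is the content of Lemma \ref{lem:TinU}, so the plan is to prove $W_t \supseteq U_t$ by exploiting the two-sided $(S_n \times S_n)$-structure of the regular representation $\RR[S_n]$ together with Young's rule.

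First I would verify that $W_t$ is closed under both the left and right regular actions, since left translation by $\rho$ sends $T_{IJ}$ to $T_{I,\rho(J)}$ and right translation sends it to $T_{I',J}$ for a suitable reordering $I'$ of $I$. Thus $W_t$ is an $(S_n \times S_n)$-submodule of $\RR[S_n]$. Under the two-sided action, $\RR[S_n] = \bigoplus_\lambda S^\lambda \boxtimes S^\lambda$ is the decomposition into pairwise non-isomorphic irreducibles, so $W_t$ is automatically a direct sum of a subcollection of these summands; it then suffices to show that each summand indexed by $\lambda$ with $\lambda_1 \geq n-t$ appears in $W_t$.

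Next I would fix $A = \{1,\ldots,t\} \subseteq [n]$ and consider the subspace $V_A \subseteq \RR[S_n]$ of functions depending only on $\sigma|_A$. Since $V_A$ is the span of the $T_{IJ}$'s with $I$ an ordering of $A$, we have $V_A \subseteq W_t$. A direct check shows that $V_A$ is closed under the left regular action and is isomorphic, as a left $S_n$-module, to the permutation representation $M^{(n-t,1^t)}$ on the ordered injections $A \hookrightarrow [n]$. By Young's rule, $M^{(n-t,1^t)}$ contains at least one copy of the Specht module $S^\lambda$ for every partition $\lambda \vdash n$ with $\lambda \trianglerighteq (n-t,1^t)$ in dominance, and for this particular shape a short partial-sum calculation shows that the dominance condition reduces to $\lambda_1 \geq n-t$, which coincides with the lex condition $\lambda \succeq (n-t,1^t)$ appearing in the definition of $U_t$.

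The final step is to upgrade this one-sided containment to the full $(S_n \times S_n)$-containment. The left $S^\lambda$-isotypic component of $\RR[S_n]$ is exactly the $(S_n \times S_n)$-irreducible $S^\lambda \boxtimes S^\lambda$, so the $(S_n \times S_n)$-invariant subspace $W_t \cap (S^\lambda \boxtimes S^\lambda)$ is either zero or everything; by the previous paragraph it is non-zero. Hence $S^\lambda \boxtimes S^\lambda \subseteq W_t$ for every $\lambda$ with $\lambda_1 \geq n-t$, giving $W_t \supseteq U_t$ and closing the proof. The main obstacle---everything else being routine identification of permutation modules and Young's rule---is precisely this last inference: one must pass from the left-module embedding $V_A \hookrightarrow W_t$ (which a priori only produces \emph{one} copy of each relevant $S^\lambda$) to full control of the right action, and it is the irreducibility of $S^\lambda \boxtimes S^\lambda$ under $S_n \times S_n$ that makes this automatic.
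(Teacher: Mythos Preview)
The paper does not actually prove Lemma~\ref{lem:TspansU}; it merely records it as ``observed and proved in~\cite{EFP}'' and moves on. So there is no in-paper proof to compare against. That said, your argument is correct and is essentially the standard representation-theoretic proof one finds in (or reconstructs from) \cite{EFP}: identify $W_t$ as an $(S_n\times S_n)$-submodule, realise the permutation module $M^{(n-t,1^t)}$ inside it via $V_A$, invoke Young's rule to see every relevant Specht module occurs, and then use irreducibility of the two-sided components $S^\lambda\boxtimes S^\lambda$ to upgrade from ``one left-copy'' to ``the whole isotypic block''. Your check that $\lambda\trianglerighteq(n-t,1^t)$ is equivalent to $\lambda_1\geq n-t$ is also correct (the partial sums beyond the first are forced because nonzero parts of a partition are each at least~$1$), and this indeed coincides with the lex condition the paper uses. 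The step you flag as the crux---passing from a single left-embedding to the full two-sided block---is handled exactly right: $W_t$ is $(S_n\times S_n)$-stable, and in the Wedderburn decomposition each $S^\lambda\boxtimes S^\lambda$ is simple, so nonzero intersection forces full containment. There is no gap.
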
   
\noindent In this paper, which deals only with $U_1$, we will henceforth use the definition $U_1 = \Span\{T_{ij}:\ i,j \in [n]\}$.

Finally, we recall a theorem from \cite{EFP}, which characterizes the Boolean functions in $U_1$.
\begin{theorem}[Ellis, Friedgut, Pilpel]
\label{thm:char}
Let $f: S_n \rightarrow \{0,1\}$ be in $U_1$. Then $f$ is a dictatorship. (Equivalently, $f$ is the characteristic function of a disjoint union of $1$-cosets.)
\end{theorem} 

The goal of the current paper, together with \cite{EFF1}, is to provide stability versions of Theorem \ref{thm:char}. This is in the spirit of similar projects in the Abelian case, which have proved extremely useful and applicable, see e.g.\ \cite{Bourgain}, \cite{FKN}, \cite{FriedgutJunta}, \cite{KindlerODonnell}, \cite{KindlerSafra} and \cite{NisanSzegedy}. The general idea in applications is to prove results in extremal combinatorics using Fourier analysis, and then use the Fourier stability results in order to deduce combinatorial stability results. A good example of this is Theorem \ref{thm:stableiso} in this paper, where we characterize the almost-extremal sets for the edge-isoperimetric inequality in the transposition graph on $S_n$ (the Cayley graph on $S_n$ generated by the transpositions). See \cite{EFF1} for more about applications in the symmetric group setting.

We remark that if $t \geq 2$, then a Boolean function in $U_t$ is not necessarily the characteristic function of a union of $t$-cosets. Theorem 27 in \cite{EFP} states that a Boolean function in $U_t$ is the characteristic function of a disjoint union of $t$-cosets, but this is false; a counterexample, and the error in the proof, is pointed out by the second author in \cite{F-note}. A counterexample when $t=2$ is as follows. Let $n \geq 8$. For any permutation $\pi \in S_n$, define $x = x(\pi) \in \{0,1\}^4$ by $x_i = \chi_{\{\pi(i) \in [4]\}}$, and consider the function
$$f:S_n \to \{0,1\}; \quad \pi \mapsto \chi_{\{x_1 \geq x_2 \geq x_3 \geq x_4 \text{ or } x_1 \leq x_2 \leq x_3 \leq x_4\}}.$$
It can be checked that $f \in U_2$, but the value of $f$ clearly cannot be determined by fixing the images of at most two elements of $[n]$, so neither $f$ nor $1-f$ is a union of 2-cosets. It is easy to use $f$ to construct a counterexample for each $t \geq 3$, by considering a product of $f$ with the characteristic function of the pointwise stabilizer of a $(t-2)$-set. We note that the main application of Theorem 27 in \cite{EFP} was to characterize (for large $n$) the $t$-intersecting families in $S_n$ of maximum size (i.e., to characterize the cases of equality in the Deza-Frankl conjecture); fortunately, this characterization follows immediately e.g.\ from the Hilton-Milner type result of the first author in \cite{tstability}, where the proof does not depend on Theorem 27 in \cite{EFP} (and indeed predates the latter).

The division between the three papers in our trilogy is as follows: in \cite{EFF1}, we deal with Boolean functions which are close to $U_1$, and have expectation $O(1/n)$. We prove that such a functions must be close to a sum of dictatorships --- equivalently, close to the characteristic function of a union of 1-cosets. In the current paper, we prove that Boolean functions that are close to $U_1$, and whose expectation is bounded away from 0 and 1, must be close to a single dictatorship. Finally, in \cite{EFF3}, we deal with Boolean functions close to $U_t$, with expectation $O(n^{-t})$; we prove that they must be close to the characteristic function of a union of $t$-cosets. (The latter is perhaps especially interesting, in view of the fact that a Boolean function in $U_t$ with expectation $O(n^{-t})$ is not necessarily the characteristic function of a union of $t$-cosets.) The term `quasi-stability' in the titles of the other two papers refers to the fact that a Boolean function such as $T_{11}+T_{22}-T_{11}\cdot T_{22}$ is 
$O(1/n^2)$ close to $U_1$, and is indeed $O(1/n^2)$ close to $T_{11}+T_{22}$, which is a sum of two dictatorships, but is not $O(1/n^2)$ close to any single dictatorship. In the case studied in this paper, we have {\em bona fide} stability, as the functions in question turn out to be close to a dictator. For both ranges of expectation we have studied, however, the Boolean functions which are close to $U_1$ are close to the characteristic function of a union of 1-cosets. Interestingly, the proof in this paper is quite different from the proof in \cite{EFF1}. It would be interesting to find a common proof that covers the complete spectrum of possible values of $\mathbb{E}[f]$.

The outline of the paper is as follows. In the rest of this section, we set up our notation, state our main result, and outline the proof. In section  \ref{sec2}, we prove the main theorem for the case of functions with expectation $=1/2$. Next, in section \ref{sec:general-case}, we adapt the proof to deal with functions with expectation bounded away from 0 and 1. Finally, in section \ref{sec4}, we give an application of our main theorem: a characterization of the almost-extremal sets for the edge-isoperimetric inequality for the transposition graph on $S_n$.

\subsection{Notation} \label{sec:definitions}
We now outline our notation systematically. If $\cF \subset S_n$, we let $\charf{\cF}$ denote the {\em characteristic function} of $\cF$, i.e.\ $\charf{\cF}:S_n\to\{0,1\}$ with $\charf{\cF}(\pi)=1$ iff $\pi \in \cF$. If $B$ is a statement, its {\em indicator} $\cond{B}$ is equal to~$1$ if $B$ is true and~$0$ if $B$ is false.

We write $[n]:=\{1,2,\ldots,n\}$. Let $S_n$ denote the symmetric group on $[n]$, i.e.\ the group of all permutations of $[n]$. For each $i,j \in [n]$, we define
\[ T_{ij} = \{ \pi \in S_n : \pi(i) = j \} \]
to be the set of all permutations sending $i$ to $j$; we call these the {\em 1-cosets} of $S_n$, as they are the cosets of stabilisers of points. Abusing notation slightly, we will also use $T_{ij}$ to denote its own characteristic vector, more properly written as $\charf{T_{ij}}$.

We define $U_1(n)$ to be the subspace of $\mathbb{R}^{S_n}$ spanned by $\{T_{ij} : i,j \in [n] \}$. When $n$ is understood, we abbreviate this to $U_1$.

We equip \(\mathbb{R}^{S_n}\) with the inner product induced by the uniform probability measure on \(S_n\):
\[\langle f,g \rangle = \frac{1}{n!}\sum_{\pi \in S_n} f(\pi) g(\pi).\]
The {\em expectation} of a real-valued function on $S_n$ will mean the expectation with respect to the uniform probability measure, i.e.
$$\mathbb{E}[f] = \frac{1}{n!}\sum_{\pi \in S_n} f(\pi).$$
We let $|| \cdot ||_2$ denote the induced Euclidean norm; i.e.
\[||f||_2 = \sqrt{\mathbb{E}[f^2]} = \sqrt{\frac{1}{n!}\sum_{\pi \in S_n} f(\pi)^2}.\]
The distance between functions, or between a function and a subspace, will mean the Euclidean distance as defined by this norm.

Throughout, if \(u\) and \(v\) are functions of several variables, the notation \(u = O(v)\) will mean that there exists an absolute constant \(C\) (not depending upon any of the variables) such that \(|u| \leq C|v|\) pointwise.

The notation $\upto{x}{\epsilon}$ is shorthand for the closed interval $[x-\epsilon,x+\epsilon]$. If $y \in \upto{x}{\epsilon}$, then we say that {\em $y$ is $\epsilon$-close to $x$}. If $y \notin \upto{x}{\epsilon}$, then we say that {\em $y$ is $\epsilon$-far from $x$}. For a set $S$, we say that {\em $x$ is $\epsilon$-close to $S$} if $|x-y| \leq \epsilon$ for some $y \in S$. Otherwise, we say that {\em $x$ is $\epsilon$-far from $S$}.

We will be dealing throughout with functions on finite probability spaces (i.e., with random variables); we will frequently refer to these simply as `functions' (rather than as `random variables'), when the underlying probability space is understood.
\newpage

\subsection{Main result} \label{sec:main}
Our main goal in this paper is to prove the following theorem.
\begin{restatable}{theorem}{maintheorem} \label{thm:main}
 Let $\cF \subset S_n$ be a family of permutations with size $|\cF| = c\cdot n!$, satisfying
 \[ \EE[(f-f_1)^2] = \error, \]
where $f = 2\chi_\cF-1$, and $f_1$ is the orthogonal projection of $f$ onto $U_1$. Then there exists a family $\cG \subset S_n$ which is a union of $dn$ disjoint 1-cosets, such that
\[
|d - c| = O\left(\error^{1/7} + \frac{1}{n^{1/3}}\right)
\quad \text{and} \quad
\frac{|\cG \triangle \cF|}{n!} = O\left(\frac{1}{\eta} \left( \error^{1/7} + \frac{1}{n^{1/3}}\right)\right),
\]
where $\eta = \min\{c,1-c\}$.
\end{restatable}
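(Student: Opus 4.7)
My plan is to analyze the matrix $(M_{ij})_{i,j\in [n]}$ of conditional expectations $M_{ij} := \EE[f\mid \pi(i)=j]$, which automatically takes values in $[-1,1]$ since $f\in\{-1,+1\}$. Writing $f_1$ in the canonical form $f_1 = \mu + \sum_{ij}b_{ij}T_{ij}$ with vanishing row- and column-sums (where $\mu = 2c-1$) gives $M_{ij} = \mu + \tfrac{n}{n-1}b_{ij}$, whence $|b_{ij}|\le 2$ and $\sum_{ij}b_{ij}^2 = (n-1)(1-\mu^2-\error)$. A Boolean function in $U_1$ is exactly a dictatorship, and corresponds to $b$ having the rank-one form $\alpha\otimes\beta$ in which $\alpha$ has one entry of size $\tfrac{n-1}{n}$ (the dictator coordinate) and $n-1$ entries equal to $-\tfrac{1}{n}$; essentially all of the Frobenius mass of $b$ then sits on a single row (pre-image dictatorship) or, by symmetry, on a single column (image dictatorship). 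The goal is to show that $b$ is quantitatively close to such a concentrated rank-one matrix.

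The hardest step is forcing this concentration on a single row or column. Specialize first to the balanced case $\mu = 0$. From $f^2 \equiv 1$ one gets $f_1^2 - 1 = -2f_1\delta - \delta^2$ with $\delta := f-f_1$ and $\EE[\delta^2] = \error$; combining $|b_{ij}|\le 2$ with the pointwise bound $|f_1|\le 2n+1$ yields $\|f_1^2-1\|_2^2 = O(n^2\error)$. Now $f_1^2$ lies in $U_2$, and its projection onto $U_2\ominus U_1$ has squared $L^2$-norm controlled by the same $O(n^2\error)$. This projection is computed by unfolding
\[
\bigl(\textstyle\sum_{ij} b_{ij}T_{ij}\bigr)^{2} = \sum_{ij}b_{ij}^2 T_{ij} + \sum_{i\ne k,\, j\ne l}b_{ij}b_{kl}\,T_{(i,k),(j,l)}
\]
and projecting the off-diagonal 2-coset sum onto $U_2\ominus U_1$; this yields an explicit quadratic functional in the matrix $b$ whose smallness forces either the row-energy vector $R_i := \sum_j b_{ij}^2$ or the column-energy vector $C_j := \sum_i b_{ij}^2$ to concentrate its mass on a single coordinate. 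The exponent $\tfrac{1}{7}$ in the final bound arises from balancing the losses incurred in the intermediate Markov and Cauchy-Schwarz passes, and the $n^{-1/3}$ term appears where the pointwise bound $|b_{ij}|\le 2$ is used to convert aggregate $L^2$-control into single-row/column control.

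Once a distinguished row $i^*$ is identified (the column case being symmetric), take $J := \{j : M_{i^* j} > 0\}$ and set $\cG := \bigsqcup_{j\in J}T_{i^* j}$, a union of $|J|$ disjoint 1-cosets. The concentration of $R_{i^*}$ gives $\tfrac{1}{n}\sum_j(1-M_{i^* j}^2) = O(\error^{1/7}+n^{-1/3})$, so most $j$ satisfy $|M_{i^* j}|\approx 1$ --- meaning $\cF\cap T_{i^* j}$ is either almost full or almost empty according to the sign of $M_{i^* j}$. A Markov-type estimate then yields $|\cF\triangle\cG|/n!$ of the claimed size, and $|d-c|$ follows from comparing $\EE[f] = 2c-1$ with $\tfrac{1}{n}\sum_j M_{i^* j}$. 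The unbalanced case $\mu\ne 0$ reuses the same skeleton throughout, with the $1/\eta$ factor in $|\cF\triangle\cG|/n!$ appearing in the final Markov step, where a relative estimate on the fraction of $T_{i^* j}$ inside $\cF$ must be rescaled against the global density $\min(c,1-c)=\eta$.
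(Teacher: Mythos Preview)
Your proposal sketches an algebraic route in the spirit of the hypercontractivity proof of the Friedgut--Kalai--Naor theorem on the cube: bound $\|f_1^2-1\|_2^2$, project onto $U_2\ominus U_1$, and argue that smallness of the resulting quadratic functional in $b$ forces the Frobenius mass of $b$ to concentrate on a single row or column. This is \emph{not} how the paper proceeds, and the gap in your outline is precisely at the step you gloss over.

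Concretely, your bound $\|f_1^2-1\|_2^2=O(n^2\error)$ rests on the crude pointwise estimate $|f_1|\le O(n)$. In the regime that matters (say $\error$ a small constant, or even $\error\sim n^{-1}$), this gives a bound of order $n$ or $n^2$ on a quantity whose natural scale is $O(1)$---it carries no information. You would need something like $\|f_1^2-1\|_2^2=O(\error)$, which on the cube follows from degree-one hypercontractivity ($\|f_1\|_4\lesssim\|f_1\|_2$), but on $S_n$ no such bound is at hand for the $\|\delta\|_4$ term, and $\delta=f-f_1$ is high-degree. Second, even granting a useful bound, you assert without proof that the $U_2\ominus U_1$ component of $\sum_{i\ne k,\,j\ne l}b_{ij}b_{kl}T_{(i,k),(j,l)}$ yields a quadratic functional whose smallness forces $R_i=\sum_j b_{ij}^2$ (or $C_j$) to concentrate on one index. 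On the cube this step is a two-line computation because the characters are orthonormal and the level-2 weight of $(\sum_i\hat g_i x_i)^2$ is exactly $2\sum_{i<j}\hat g_i^2\hat g_j^2$; on $S_n$ the $T_{(i,k),(j,l)}$ are far from orthogonal, the projection onto $U_2\ominus U_1$ mixes rows and columns, and the resulting functional is not obviously a clean energy-concentration witness. You have not computed it, and the claim that it forces concentration is the entire content of the theorem. Finally, the assertion that the exponent $1/7$ ``arises from balancing the losses'' is not an argument: the paper's $1/7$ comes from a specific seven-way split of the error budget across Lemmas~\ref{lem:typical-restriction}--\ref{lem:diagonal}, and there is no reason a different mechanism would reproduce it.

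For comparison, the paper's route is entirely probabilistic and combinatorial. It fixes a random restriction $(X,Y)$, writes $f_1|_{T_{X,Y}}=g_1+g_2$ as a sum of two \emph{independent} random variables, and uses the near-Booleanity of the sum to force one summand to be nearly constant and the other nearly Boolean (Lemma~\ref{lem:decomposition}). Translating to a random permutation, this says most generalized diagonals of $(a_{ij})$ carry exactly one large entry; a Berry--Esseen step (Lemma~\ref{lem:diagonal}) rules out the all-small case. An induction on $n$ (Lemmas~\ref{lem:strong:0a}--\ref{lem:strong}) then locates a single row or column with a $(1-O(\error^{1/7}))$-fraction of large entries. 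None of this uses $U_2$ or squares of $f_1$.
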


\begin{remark}
It is convenient to work with the $\pm 1$-valued function $f$, rather than the $0/1$-valued function $\chi_{\mathcal{F}}$. Note that if $(\chi_{\mathcal{F}})_1$ denotes the orthogonal projection of $\chi_{\mathcal{F}}$ onto $U_1$, then the square of the Euclidean distance of $\chi_{\mathcal{F}}$ from $U_1$ is
\[\EE[(\chi_\mathcal{F}-(\chi_{\mathcal{F}})_1)^2] = \tfrac{1}{4}\EE[(f-f_1)^2].\]
Throughout the proof, a {\em Boolean function} will mean a function taking values in $\{\pm 1\}$, rather than $\{0,1\}$.
\end{remark}


For the entire proof, we will make the assumptions
\begin{gather}
\label{eq:n-assumption}
n \geq 4, \\
\label{eq:e-assumption}
\frac{1}{n^{7/3}} \leq \error < \errub,
\end{gather}
where $\errub > 0$ depends only upon $c$. Later, we will show how to get rid of these assumptions. During the proof, we will use the phrase \emph{since $\error$ is small enough, $P$ holds} to mean that for some $\errub >0$, the statement $P$ follows from $\error < \errub$.

For pedagogical reasons, we will first assume that $c = 1/2$. This assumption does not affect the proof very much, but it simplifies the expressions appearing therein. After completing the proof in this case, we will show how to extend it to general~$c$, carefully noting the relation between $\errub$ and $c$.

\subsection{Proof overview} \label{sec:overview}
We adopt a simple canonical way to express $f_1$ as a linear combination of the form
\[ f_1 = \sum_{i,j} a_{ij} T_{ij}; \]
we then study the matrix of coefficients $(a_{ij})$. This offers a nice visualization of the function, due to the observation that 
 \[ f_1(\pi) = \sum_{i=1}^n a_{i\pi(i)}, \] 
 i.e. $f_1(\pi)$ is equal to the sum of the entries on a {\em generalised diagonal} of the matrix $(a_{ij})$. (A generalised diagonal of an $n \times n$ matrix is a set of $n$ entries with one entry from each row and one from each column, so corresponds to a permutation of $\{1,2,\ldots,n\}$.)
 
Note that the $T_{ij}$'s are linearly dependent (the dimension of $U_1$ is only $(n-1)^2+1$, whereas there are $n^2$ different $T_{ij}$'s), so there are many possible ways to represent $f_1$ in such a manner. It turns out that a particularly useful choice (when $c=1/2$) is
\begin{equation} \tag{\ref{eq:a-formula}}
 a_{ij} = (n-1) \langle f, T_{ij} \rangle.
\end{equation}

\newpage
To illustrate this, here is the matrix corresponding to the dictatorship $\cF = \{\pi \in S_n : 1 \leq \pi(1) \leq n/2 \}$ (where $n$ is even):
\canonicalexample

This matrix exemplifies the usefulness of our choice $  a_{ij} = (n-1) \langle f, T_{ij} \rangle $: the entries that are significant for our dictatorship (row 1, which depends on the image of 1) are all close in absolute value to 1, whereas all other entries are close to 0.
The idea of the proof is to discover some properties of the matrix $(a_{ij})$, and then show that they imply
that it looks roughly like the matrix above --- namely, that it has precisely one row or column in which almost half the entries are very close to $1$ and almost half the entries are very close to $-1$, and that almost all the other entries in the matrix are very close to $0$.

The proof breaks down into two main parts. In the first part, we show that for almost all $\pi \in S_n$, the generalised diagonal defined by $\pi$, namely $\{a_{i\pi(i)} : 1 \leq i \leq n\}$, has precisely one entry which is `large' (close to 1 or $-1$), and all the rest of its entries are small. In the second part, we deduce that~$(a_{ij})$ must have either a row or a column, almost all of whose entries are large. This will enable us to complete the proof.

\subsubsection*{Part 1}
\textbf {Step 1 (\autoref{sec:restrictions}, \autoref{sec:decomposition}).} Consider any two sets $X,Y \subset \{1,\ldots,n\}$ with $|X|=|Y|$, and the corresponding set of permutations
\[ T_{X,Y} = \{ \pi \in S_n : \pi(X) = Y \}. \]
When calculating $f_1$ on $T_{X,Y}$, we only need to look at the submatrices of~$(a_{ij})$ defined by $X\times Y$ and by $\overline{X}\times \overline{Y}$. So it is natural to define, for $\pi_1$ a bijection from $X$ to $Y$ and $\pi_2$ a bijection from $\overline{X}$ to $\overline{Y}$,
\[
 g_1(\pi_1) = \sum_{i \in X} a_{i\pi_1(i)}, \quad
 g_2(\pi_2) = \sum_{i \in \overline{X}} a_{i \pi_2(i)}, \quad
 g(\pi_1,\pi_2) = g_1(\pi_1) + g_2(\pi_2),
\]
where $(\pi_1,\pi_2)$ denotes the permutation of $S_n$ whose restrictions to $X$ and $\overline{X}$ are $\pi_1$ and $\pi_2$ respectively. Notice that $g$ is just the restriction of $f_1$ to $T_{X,Y}$. For most choices of $X,Y$, these functions are `well-behaved', meaning that all of the following hold (\autoref{sec:restrictions}):
\begin{itemize}
 \item For most permutations $\pi \in T_{X,Y}$, $g(\pi)$ is close to $\pm 1$.
 \item Furthermore, the function $g$ is close to $\pm 1$ in an $L^2$ sense.
 \item Both $\EE g_1$ and $\EE g_2$ are close to their expected value, $0$.
\end{itemize}
Next, we note that, crucially, the function $g$ (which can be viewed as a random variable) is the sum of two independent random variables $g_1,g_2$, and yet is concentrated near $1$ and $-1$. How can that happen? We show (\autoref{sec:decomposition}) that it must be the case that one of the $g_i$'s (say $g_1$) is concentrated around a constant $C$, and that the other (say $g_2$) is concentrated around two values, $-C-1$ and $-C+1$. Using the observations above, it follows that $C$ is very close to $0$.\\
\\
\textbf{Step 2 (\autoref{sec:partition}).} For any permutation $\pi \in S_n$, we consider all pairs $(X,Y)$ compatible with it, i.e. all pairs $(X,\pi(X))$. For most choices of $\pi$ and for most choices of compatible $(X,Y)$, it will be true that one of $g_1(\pi_1),g_2(\pi_2)$ is close to $0$, and the other is close to $\pm 1$. Note that
\[ g_1(\pi_1) = \sum_{i \in X} a_{i\pi(i)}, \quad g_2(\pi_2) = \sum_{i \in \overline{X}} a_{i\pi(i)}. \]
Put differently, for most permutations $\pi \in S_n$ it is true that for most ways of splitting the generalised diagonal 
 $D = \{ a_{i\pi(i)} : 1 \leq i \leq n \}$ into two parts, one part sums to roughly $0$, and the other to roughly $\pm 1$. That can only happen if almost all entries in $D$ are small, and one is close in magnitude to $1$. 

\subsubsection*{Part 2 (\autoref{sec:strong})}
This part uses induction on $n$ to prove the following claim. If an $n \times n$ matrix satisfies \emph{property $Q(\delta)$}, namely a $(1-\delta)$-fraction of its generalised diagonals have a single entry which is large in magnitude, then the matrix has a \emph{strong line} --- either a row or a column, a $(1-C\delta)$-fraction of whose entries are large. (Here, $C$ does not depend on $n$.)

\textbf{Base case.} When $n$ is small compared to $1/\delta$, we can prove directly that there is a line where {\em all} of the entries are large. 

\textbf{Induction step.} Given an $n\times n$ matrix $M$ satisfying $Q(\delta)$ and a set $X$ of $n/2$ rows, we can always find a set $Y$ of $n/2$ columns such that either $X\times Y$ or $\overline{X}\times \overline{Y}$ also satisfy $Q(\delta)$. The induction hypothesis shows that the relevant submatrix has a strong line. The strong lines for different choices of $X$ must be the same (on the same row or column of $M$), since otherwise the probability that a generalized diagonal passes through two large entries would be too big. Altogether, these strong lines constitute a line~$\ell$ which is almost as strong as required. A small bootstrapping argument shows that $\ell$ must indeed have the required number of large entries.

\subsubsection*{Culmination (\autoref{sec:culmination})}
At this stage of the proof, we know that the matrix ~$(a_{ij})$ has a line, say row $i$, almost all of whose entries are close either to $-1$ or to $1$. It follows that for most $j$, it holds that $(n-1)! \langle f, T_{ij} \rangle$ is close to~$0$ or to~$1$. The disjoint union of the 1-cosets corresponding to those entries close to~$1$ form a good approximation to~$\cF$.\\

Part 2 is largely independent of Part 1. Part 1 shows that most generalized diagonals of the matrix $(a_{ij})$ are composed of one large entry and $n-1$ small entries. Part 2 abstracts this situation, and deduces the existence of a strong line. The results of Part 2 work for any definition of which entries are large and which are small, and so are of independent interest.

\paragraph*{Glossary of terminology} \emph{Restrictions} are defined in the beginning of~\autoref{sec:restrictions}. \emph{Typical restrictions} are defined in the end of~\autoref{sec:restrictions}. \emph{Good restrictions} are defined in the beginning of~\autoref{sec:strong}. Functions which are \emph{almost Boolean} or \emph{almost close to~$C$} are defined in~\autoref{sec:restrictions}, just before \autoref{lem:typical-restriction}. \emph{Partitions}, \emph{good partitions} and \emph{good permutations} are defined in the beginning of~\autoref{sec:partition}. \emph{Small} and \emph{large} entries are defined in the beginning of~\autoref{sec:strong}. \emph{Strong lines} (as well as strong rows and columns) are defined in~\autoref{sec:strong}, just after \autoref{lem:good-half}.

\section{Proof when \texorpdfstring{$c=1/2$}{c=1/2}}\label{sec2}
\subsection{Matrix representation} \label{sec:matrix}
Let $\mathcal{F}$ and $f$ be as in the statement of the theorem. Since $f_1 \in U_1$, it can be represented as a linear combination of 1-cosets $T_{ij}$. We single out one such representation:
\begin{equation} \label{eq:a-formula}
 a_{ij} = (n-1) \langle f, T_{ij} \rangle.
\end{equation}

We start by showing that the $a_{ij}$ do indeed represent $f_1$.
\begin{lemma} \label{lem:a}
 We have
 \[ f_1 = \sum_{i,j} a_{ij} T_{ij}. \]
 Furthermore, each row and each column of the matrix $(a_{ij})$ sums to zero:
 \[ \sum_j a_{ij} = 0 \quad \forall i \in [n],\quad \sum_i a_{ij} = 0\quad \forall j \in [n].\]
 For each permutation $\pi \in S_n$, we have
 \[ f_1(\pi) = \sum_i a_{i\pi(i)}. \]
\end{lemma}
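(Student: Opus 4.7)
The plan is to verify the three assertions in a convenient order. The pointwise formula is immediate from the main identity, since $T_{ij}(\pi) = \mathbf{1}[\pi(i)=j]$ gives $\sum_{i,j} a_{ij} T_{ij}(\pi) = \sum_i a_{i\pi(i)}$. So the real content lies in the row/column sum condition and the identity $f_1 = \sum_{i,j} a_{ij} T_{ij}$.

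For the row and column sums, I would exploit the fact that $\sum_j T_{ij} \equiv 1$ (every permutation has a unique image $\pi(i)$), giving $\sum_j a_{ij} = (n-1)\langle f, \sum_j T_{ij}\rangle = (n-1)\EE[f]$. Since we are working under the assumption $c = 1/2$, $\EE[f] = 2c - 1 = 0$, and the column sums vanish by a symmetric argument. In particular, $\sum_{i,j} a_{ij} = 0$, a fact I will use below.

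For the main identity, set $h = \sum_{i,j} a_{ij} T_{ij}$; clearly $h \in U_1$, and by \autoref{lem:TspansU} the $T_{kl}$ span $U_1$, so $h = f_1$ will follow once I verify that $\langle h, T_{kl}\rangle = \langle f, T_{kl}\rangle$ for every pair $(k,l)$. The crucial input is the Gram matrix
\[
\langle T_{ij}, T_{kl}\rangle \;=\; \Pr[\pi(i)=j,\ \pi(k)=l] \;=\; \begin{cases} 1/n & \text{if } (i,j)=(k,l), \\ 0 & \text{if exactly one of } i=k,\ j=l \text{ holds}, \\ 1/(n(n-1)) & \text{if } i \neq k \text{ and } j \neq l. \end{cases}
\]
Thus $\langle h, T_{kl}\rangle = a_{kl}/n + (n(n-1))^{-1}\sum_{i\neq k,\, j\neq l} a_{ij}$, and inclusion--exclusion together with the zero row/column sums gives $\sum_{i\neq k,\, j\neq l} a_{ij} = \sum_{i,j} a_{ij} - \sum_j a_{kj} - \sum_i a_{il} + a_{kl} = a_{kl}$. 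Substituting yields $\langle h, T_{kl}\rangle = a_{kl}\bigl(\tfrac{1}{n} + \tfrac{1}{n(n-1)}\bigr) = a_{kl}/(n-1) = \langle f, T_{kl}\rangle$ by the definition of $a_{kl}$.

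I do not anticipate any real obstacle; the entire lemma is a bookkeeping check. The factor $(n-1)$ in the definition of $a_{ij}$ is engineered precisely to cancel the $1/(n-1)$ that emerges from the Gram-matrix reduction, and the zero row/column sums (which require $c = 1/2$) are exactly what is needed to make the off-diagonal contribution collapse to a single term.
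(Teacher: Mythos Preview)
Your proposal is correct and follows essentially the same approach as the paper: first derive the zero row/column sums from $\EE[f]=0$, then verify $f_1 = \sum_{i,j} a_{ij} T_{ij}$ by testing inner products against the spanning set $\{T_{kl}\}$ using the Gram matrix and inclusion--exclusion, and finally read off the pointwise formula. The only cosmetic differences are the order of presentation and that the paper makes the step $\langle f, T_{kl}\rangle = \langle f_1, T_{kl}\rangle$ (from $T_{kl}\in U_1$) explicit at the end of the computation, which you use implicitly.
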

\begin{proof}
The `real' proof of this fact uses the Fourier inversion formula, and the characters of the first two irreducible representations of $S_n$. This is how we derived the formula. However, to avoid digression, we offer a simpler, {\em ad hoc} argument.

The second statement follows from a simple calculation. For each $i$,
\[
 \sum_j \langle f,T_{ij} \rangle = \langle f,1 \rangle = 2 \langle \charf{\cF},1 \rangle - \langle 1,1 \rangle = 0,
\]
so
$$\sum_j a_{ij}  = 0.$$
Similarly, for each $j$,
\[
 \sum_i \langle f,T_{ij} \rangle = \langle f,1 \rangle = 0,
\]
so
$$\sum_i a_{ij}  = 0.$$

For the first statement, both sides of the equation are in $U_1$, so it is enough to show that both sides have the same inner product with each $T_{ij}$. Note that $\langle T_{ij},T_{ij} \rangle = 1/n$, $\langle T_{ij},T_{kl} \rangle = \tfrac{1}{n(n-1)}$ if $i \neq k$ and $j \neq l$, and $\langle T_{ij},T_{kl} \rangle = 0$ if $i \neq k$ or $j \neq l$. Therefore,
 \[
  \big\langle \sum_{k,l} a_{kl} T_{kl}, T_{ij} \big\rangle =
  \frac{a_{ij}}{n} + \frac{1}{n(n-1)} \sum_{\substack{k \neq i\\ l \neq j}} a_{kl} = 
  \frac{a_{ij}}{n} + \frac{a_{ij}}{n(n-1)} = \frac{a_{ij}}{n-1} = \langle f, T_{ij} \rangle = \langle f_1, T_{ij} \rangle,
 \]

using
\begin{equation}\label{eq:zerosum} \sum_{\substack{k \neq i\\ l \neq j}} a_{kl} = \sum_{k,l}a_{kl} - \sum_{l}a_{il}-\sum_{k} a_{kj}+a_{ij} = a_{ij}.\end{equation}

Finally, the formula for $f_1(\pi)$ follows immediately from the first statement.
\end{proof}

The preceding lemma shows that each value of $f_1$ is equal to the sum of a generalised diagonal in the matrix $(a_{ij})$.

Next, we calculate the $\ell^2$ norm of the vector formed by the entries $a_{ij}$.
\begin{lemma} \label{lem:a-l2}
 We have
 \[ \sum_{i,j} a_{ij}^2 = (n-1)(1-\error). \]
\end{lemma}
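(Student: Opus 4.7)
The plan is to compute $\|f_1\|_2^2$ in two different ways. By the Pythagorean theorem applied to the orthogonal decomposition $f = f_1 + (f-f_1)$, we have $\|f\|_2^2 = \|f_1\|_2^2 + \|f-f_1\|_2^2$. Since $f$ is $\pm 1$-valued, $\|f\|_2^2 = \EE[f^2] = 1$, and by hypothesis $\|f-f_1\|_2^2 = \error$. Hence $\|f_1\|_2^2 = 1 - \error$.

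Next I would compute $\|f_1\|_2^2$ by expanding using the representation $f_1 = \sum_{i,j} a_{ij} T_{ij}$ from Lemma \ref{lem:a}. Since $f_1$ is the orthogonal projection of $f$ onto $U_1$, we have $\langle f_1, f_1 \rangle = \langle f_1, f \rangle$. Expanding the latter gives
\[
\langle f_1, f \rangle = \sum_{i,j} a_{ij} \langle T_{ij}, f \rangle = \sum_{i,j} a_{ij} \cdot \frac{a_{ij}}{n-1} = \frac{1}{n-1}\sum_{i,j} a_{ij}^2,
\]
where I used the defining relation $a_{ij} = (n-1)\langle f, T_{ij}\rangle$ from (\ref{eq:a-formula}).

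Equating the two expressions for $\|f_1\|_2^2$ gives $\sum_{i,j} a_{ij}^2 = (n-1)(1-\error)$, which is the claim. There is no real obstacle here; the only thing to be careful about is invoking the projection property $\langle f_1, f_1 \rangle = \langle f_1, f \rangle$ at the right moment, rather than trying to expand $\langle f_1, f_1 \rangle$ directly using the nontrivial Gram matrix of the $T_{ij}$'s (which would work too, but would duplicate the calculation already performed in the proof of Lemma \ref{lem:a}).
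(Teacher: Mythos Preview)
Your proof is correct and follows the same overall strategy as the paper: compute $\|f_1\|_2^2$ in two ways, once via Pythagoras and once via the expansion $f_1 = \sum_{i,j} a_{ij} T_{ij}$. The only difference is in how the second computation is carried out. The paper expands $\langle f_1, f_1 \rangle = \sum_{i,j,k,l} a_{ij} a_{kl} \langle T_{ij}, T_{kl} \rangle$ directly, evaluates the Gram matrix entries $\langle T_{ij}, T_{kl} \rangle$, and then uses the identity \eqref{eq:zerosum} (that $\sum_{k\neq i,\,l\neq j} a_{kl} = a_{ij}$) to collapse the cross terms. Your shortcut $\langle f_1, f_1 \rangle = \langle f_1, f \rangle$ together with the defining relation \eqref{eq:a-formula} bypasses both the Gram matrix and \eqref{eq:zerosum} entirely, which is a genuine simplification; the paper's route, on the other hand, would still work even if the $a_{ij}$ were given by some other representation of $f_1$ satisfying the row- and column-sum conditions of Lemma~\ref{lem:a}, rather than specifically by \eqref{eq:a-formula}.
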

\begin{proof}
 Since $f_1$ is an orthogonal projection of $f$, we have
 \[
  1 = \|f\|_2^2 = \|f_1\|_2^2 + \|f-f_1\|_2^2 = \|f_1\|_2^2 + \error.
 \]
 Therefore, $\|f_1\|_2^2 = 1 - \error$. On the other hand, we have
 \begin{align*}
 \|f_1\|_2^2 &= \sum_{i,j,k,l} a_{ij} a_{kl} \langle T_{ij},T_{kl} \rangle \\ &=
 \frac{1}{n} \sum_{i,j} a_{ij}^2 + \frac{1}{n(n-1)} \sum_{i,j} \sum_{\substack{k \neq i\\ l \neq j}} a_{ij}a_{kl} \\ &=
 \frac{1}{n} \sum_{i,j} a_{ij}^2 + \frac{1}{n(n-1)} \sum_{i,j} a_{ij}^2 = \frac{1}{n-1} \sum_{i,j} a_{ij}^2,
 \end{align*}
using \ref{eq:zerosum}.
\end{proof}

\subsection{Random restrictions} \label{sec:restrictions}
For $X,Y \subset [n]$ of equal size, let $T_{X,Y}$ denote the set of all permutations sending $X$ to $Y$:
\[ T_{X,Y} = \{ \pi \in S_n : \pi(X) = Y) \}. \]
We call such a pair $(X,Y)$ a \emph{restriction}. Let $g(X,Y)$ denote the subvector of $f_1$ supported on $T_{X,Y}$. 

The final part of \autoref{lem:a} shows that every value of $f_1$ is the sum of a generalized diagonal of $(a_{ij})$. It is natural to decompose $g(X,Y)$ into two functions, one depending on the submatrix supported by $X \times Y$, the other depending on the submatrix supported by $\overline{X} \times \overline{Y}$:
\[ g_1(X,Y) = \sum_{i \in X, j \in Y} a_{ij} T_{ij}, \quad g_2(X,Y) = g_1(\overline{X},\overline{Y}). \]
(In the definition of $g_1$, $T_{ij}$ is, strictly speaking, the restriction of $\chi_{T_{ij}}$ to $T_{X,Y}$.) \autoref{lem:a} immediately implies that $g(X,Y) = g_1(X,Y) + g_2(X,Y)$. Note that $g_1(X,Y)$ and $g_2(X,Y)$ are both supported on $T_{X,Y}$.

We now define a probability distribution $\rand$ over the set of all restrictions, as follows. Each $i \in [n]$ is included in $X$ independently at random with probability $1/2$. Then, $Y$ is chosen uniformly at random from all sets of size $|X|$. Note that this definition is symmetric between $X$ and $Y$, and furthermore, $(X,Y)$ has the same distribution as $(\overline{X},\overline{Y})$.

Most of this subsection will be devoted to the study of properties of typical restrictions. We start by calculating the mean and variance of $\EE[g_1(X,Y)]$ when $(X,Y) \sim \rand$. This will enable us to show that $\EE[g_1(X,Y)]$ and $\EE[g_2(X,Y)]$ are typically small in magnitude.
\begin{lemma} \label{lem:e-moments}
 Let $X,Y \subset [n]$ with $|X|=|Y|$, and define 
 $$m(X,Y) = \EE_{T_{X,Y}}[g_1(X,Y)],$$
 where the expectation is with respect to the uniform probability measure on $T_{X,Y}$.
 
 If $(X,Y) \sim \mathcal{R}$, then the mean and variance of $m(X,Y)$ with respect to $\mathcal{R}$ satisfy
 $$\mathbb{E}_{\mathcal{R}} [m] = 0$$
  and
  $$\mathbb{V}_{\mathcal{R}}[m] \leq \tfrac{1}{2n}.$$
\end{lemma}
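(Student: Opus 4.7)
The plan is to use the explicit formula that, for a fixed restriction $(X,Y)$ with $|X| = s \geq 1$, the uniform measure on $T_{X,Y}$ induces the uniform measure on bijections $\pi_1 \colon X \to Y$, so
\[ m(X,Y) = \frac{1}{s} \sum_{i \in X,\, j \in Y} a_{ij}. \]
Computing the mean is then routine: condition on $|X| = s \geq 1$, note that $X$ and $Y$ are independent uniform $s$-subsets of $[n]$, and use the row/column zero-sum identities from \autoref{lem:a}, so that $\EE\bigl[\sum_{i \in X, j \in Y} a_{ij} \,\big|\, |X|=s\bigr] = (s/n)^2 \sum_{i,j} a_{ij} = 0$. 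The case $s = 0$ contributes $m = 0$.

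For the variance, the main computation is $\EE_{\mathcal{R}}[m^2]$. I would condition on $|X| = s$ and expand
\[ \EE\Biggl[\Bigl(\sum_{i \in X,\, j \in Y} a_{ij}\Bigr)^{\!2}\,\Big|\, |X|=s\Biggr] = \sum_{i,i',j,j'} a_{ij}\, a_{i'j'}\, \Pr[i,i' \in X]\cdot \Pr[j,j' \in Y], \]
splitting into the four cases $(i = i', j = j')$, $(i = i', j \neq j')$, $(i \neq i', j = j')$ and $(i \neq i', j \neq j')$. Using the inclusion probabilities $s/n$ and $s(s-1)/(n(n-1))$ together with the identities $\sum_{j' \neq j} a_{ij'} = -a_{ij}$, $\sum_{i' \neq i} a_{i'j} = -a_{ij}$ and equation \eqref{eq:zerosum}, the four terms combine telescopically as
\[ A \cdot \frac{s^2}{n^2(n-1)^2}\Bigl[(n-1) - (s-1)\Bigr]^{\!2} = \frac{A\, s^2 (n-s)^2}{n^2 (n-1)^2}, \]
where $A = \sum_{i,j} a_{ij}^2 = (n-1)(1-\error)$ by \autoref{lem:a-l2}. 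Dividing by $s^2$ yields
\[ \EE[m^2 \mid |X| = s] = \frac{(1-\error)(n-s)^2}{n^2(n-1)}. \]

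Finally, I would take expectation over $S = |X| \sim \mathrm{Bin}(n, 1/2)$ using $\EE[(n-S)^2] = (n/2)^2 + n/4 = n(n+1)/4$, obtaining
\[ \VV_{\mathcal{R}}[m] = \EE_{\mathcal{R}}[m^2] \leq \frac{(1-\error)(n+1)}{4n(n-1)} \leq \frac{1}{2n}, \]
where the last inequality uses $n \geq 4$ (equivalent to $n+1 \leq 2(n-1)$ for $n \geq 3$, and we have \eqref{eq:n-assumption}). The bulk of the work is the algebraic collapse of the four-case sum; the key observation making it succinct is that each coincidence pattern contributes a multiple of $A$ with sign and weight arranged so that the bracket factors as $(n-s)^2$, which in turn is exactly what cancels the $1/s^2$ from $m(X,Y)$'s denominator down to a $1/n$-scale variance. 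Nothing here is delicate beyond careful bookkeeping, so I do not expect a real obstacle.
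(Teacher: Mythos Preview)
Your proposal is correct and follows essentially the same approach as the paper: write $m(X,Y) = \frac{1}{|X|}\sum_{i\in X,\,j\in Y} a_{ij}$, expand the square into the four coincidence cases, collapse each to a multiple of $A=\sum a_{ij}^2$ via the row/column zero-sum identities and \eqref{eq:zerosum}, obtain the factorisation $(n-s)^2/(n-1)^2$, and then average over $|X|\sim\mathrm{Bin}(n,1/2)$. Your handling of the $s=0$ edge case (noting $m=0$ there and using $\leq$ rather than $=$ in the final variance line) is in fact slightly more careful than the paper, which tacitly applies the conditional formula at $s=0$ as well.
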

\begin{proof}
 We start with a formula for $m(X,Y)$:
\[
 m(X,Y) = \sum_{i\in X, j\in Y} a_{ij} \EE_{T_{X,Y}}[T_{ij}] = \frac{1}{|X|} \sum_{i\in X, j\in Y} a_{ij}.
\]
 Conditioned upon $|X|$, we have
\[
 \mathbb{E}_{\mathcal{R}}[m(X,Y)| |X|] = \frac{1}{|X|} \frac{|X|^2}{n^2} \sum_{i,j} a_{ij} = 0.
\]
 Hence, $\mathbb{E}_{\mathcal{R}}[m] = 0$.

 The next step is to calculate $\mathbb{E}_{\mathcal{R}}[m^2]$. Expanding the formula, we get
 \begin{align*}
  |X|^2 m(X,Y)^2 & = \sum_{i,j} \cond{\{i \in X, j \in Y\}} a_{ij}^2 +
  \sum_i \sum_{j \neq l} \cond{\{i \in X, j,l \in Y\}} a_{ij} a_{il}\\
  & + \sum_j \sum_{i \neq k} \cond{\{i,k \in X, j \in Y\}} a_{ij} a_{kj} +
 \sum_{\substack{k \neq i,\\ l \neq j}} \cond{\{i,k \in X, j,l \in Y\}} a_{ij} a_{kl}.
 \end{align*}
 Taking expectations, we get
 \begin{align*}
  |X|^2 \mathbb{E}_{\mathcal{R}}[m(X,Y)^2 | |X|] &=
  \Pr[i \in X \land j \in Y] \sum_{i,j} a_{ij}^2 +
  \Pr[i \in X \land j,l \in Y] \sum_i \sum_{l \neq j} a_{ij} a_{il} \\ &+
  \Pr[i,k \in X \land j \in Y] \sum_{k \neq i} \sum_j a_{ij} a_{kj} +
  \Pr[i,k \in X \land j,l \in Y] \sum_{\substack{k \neq i,\\ l \neq j}} a_{ij} a_{kl}.
 \end{align*}
 Using \ref{eq:zerosum}, together with
 \[
  \sum_{l \neq j} a_{il} = \sum_{k \neq i} a_{kj} = -a_{ij}\]
 (from \autoref{lem:a}) we obtain
 \begin{align*}
  n^2 \mathbb{E}_{\mathcal{R}}[m(X,Y)^2 | |X|]& =
  \sum_{i,j} a_{ij}^2 -
  2\frac{|X|-1}{n-1} \sum_{i,j} a_{ij}^2 +
  \frac{(|X|-1)^2}{(n-1)^2} \sum_{i,j} a_{ij}^2 \\
  &=
  \left(1 - \frac{|X|-1}{n-1}\right)^2 \sum_{i,j} a_{ij}^2.
 \end{align*}
 Taking expectations and using the estimate $\sum_{i,j} a_{ij}^2 \leq n-1$ provided by \autoref{lem:a-l2}, we conclude that
 \begin{align*}
  \mathbb{E}_{\mathcal{R}}[m(X,Y)^2] & = \frac{n+1}{4n(n-1)^2} \sum_{i,j} a_{ij}^2\\
  & \leq \frac{n+1}{4n(n-1)} \\
  & \leq \frac{1}{2n}.
 \end{align*}
\end{proof}

The following lemma states some properties that a random restriction enjoys with probability close to 1. The lemma uses the following nomenclature for functions on a probability space (a.k.a. random variables):
\begin{itemize}
 \item A function $\phi$ is \emph{$(\delta,\epsilon)$-almost Boolean} if with probability at least $1-\delta$, $\phi$ is $\epsilon$-close to~$\pm 1$. In symbols, \[ \Pr[|\phi| \in \upto{1}{\epsilon}] \geq 1 - \delta. \]
 \item A function $\phi$ is \emph{$(\delta,\epsilon)$-almost close to~$C$} if with probability at least $1-\delta$, $\phi$ is $\epsilon$-close to~$C$. In symbols, \[ \Pr[|\phi - C| \leq \epsilon] \geq 1 - \delta. \]
\end{itemize}

\begin{lemma} \label{lem:typical-restriction}
 Let $(X,Y) \sim \rand$. With probability at least $1-3\error^{1/7}$, $(X,Y)$ satisfies the following properties:
\begin{enumerate}[(a)]
 \item \label{lem:typical-restriction:bool} $g(X,Y)$ is $(\error^{4/7},\error^{1/7})$-almost Boolean.
 \item \label{lem:typical-restriction:exp} $\EE[g_1(X,Y)]$ and $\EE[g_2(X,Y)]$ are $\error^{1/7}$-close to zero.
 \item \label{lem:typical-restriction:l2} $\EE[(|g(X,Y)|-1)^2] \leq \error^{6/7}$.
\end{enumerate}
\end{lemma}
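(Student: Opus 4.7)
Parts (a) and (c) will follow from the global bound $\|f-f_1\|_2^2 = \error$ by an averaging argument, while (b) will follow from the second-moment calculation in \autoref{lem:e-moments} via Chebyshev. The key observation that makes the averaging step work is that the two-stage sampling ``pick $(X,Y) \sim \rand$, then pick $\pi$ uniformly in $T_{X,Y}$'' produces a uniformly random permutation in $S_n$. This is easy to verify: for each ordered pair $(\sigma, X)$ there is a unique compatible $Y = \sigma(X)$, and summing over $X$ the density $2^{-n}\binom{n}{|X|}^{-1}/(|X|!(n-|X|)!) = 2^{-n}/n!$ gives each $\sigma$ weight $1/n!$.

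\textbf{Proof of (c) and (a).} Since $g(X,Y)$ is simply $f_1$ restricted to $T_{X,Y}$, and since $\bigl||f_1|-1\bigr| \leq |f_1-f|$ holds pointwise (using $|f|=1$), the sampling observation gives
\[
\EE_\rand \EE_{T_{X,Y}}[(|g|-1)^2] = \EE[(|f_1|-1)^2] \leq \EE[(f_1-f)^2] = \error.
\]
Markov's inequality over $\rand$ then shows (c) holds except with probability at most $\error^{1/7}$. Whenever (c) holds, a second application of Markov inside $T_{X,Y}$ yields
\[
\Pr_{T_{X,Y}}\bigl[\,\bigl||g|-1\bigr|>\error^{1/7}\bigr] \leq \error^{6/7}/\error^{2/7}=\error^{4/7},
\]
which is precisely (a).

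\textbf{Proof of (b) and conclusion.} \autoref{lem:e-moments} provides $\EE_\rand[m]=0$ and $\VV_\rand[m]\leq 1/(2n)$ for $m(X,Y)=\EE_{T_{X,Y}}[g_1(X,Y)]$. Chebyshev gives
\[
\Pr_\rand[|m|>\error^{1/7}] \leq \frac{1}{2n\error^{2/7}} \leq \frac{1}{2n^{1/3}} \leq \frac{\error^{1/7}}{2},
\]
where the middle inequality uses the assumption $\error \geq n^{-7/3}$ from \eqref{eq:e-assumption}. The symmetry of $\rand$ under $(X,Y) \mapsto (\overline{X},\overline{Y})$ gives the same bound with $g_2$ in place of $g_1$. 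A union bound over the three bad events then yields total failure probability at most $3\error^{1/7}$.

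\textbf{Main obstacle.} There is no genuinely hard step here; the argument is a disciplined stack of Chebyshev and two iterated Markov applications. The only points deserving care are (i) recognizing the sampling equivalence between $\rand \otimes T_{X,Y}$ and the uniform measure on $S_n$, which is what lets the global $L^2$-error $\error$ be transported into a typical restriction, and (ii) checking that the peculiar exponents $1/7, 4/7, 6/7$ are chosen precisely so that the iterated Markov steps balance against the Chebyshev bound under the assumption $\error \geq n^{-7/3}$.
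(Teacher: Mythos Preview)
Your proof is correct and follows essentially the same route as the paper: the sampling equivalence, Markov's inequality for (c), Chebyshev plus the assumption $\error \geq n^{-7/3}$ for (b), and the symmetry $(X,Y)\leftrightarrow(\overline X,\overline Y)$ for $g_2$. The one minor difference is that you derive (a) as an immediate consequence of (c) by a second Markov step inside $T_{X,Y}$, whereas the paper proves (a) directly via an independent two-level averaging argument; your version is slightly more economical (indeed, the events ``(a) fails'' and ``(c) fails'' coincide in your setup, so you actually get $2\error^{1/7}$ rather than $3\error^{1/7}$).
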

\begin{proof}
 We claim that each of the different parts holds with probability at least $1-\error^{1/7}$. The lemma follows using a union bound. We will use the fact that selecting a random partition $(X,Y) \sim \mathcal{R}$ and then selecting a uniform random element in $T_{X,Y}$ is the same as choosing a uniform random permutation. This holds because for any $X$, the sets $(T_{X,Y}:\ |Y|=|X|)$ partition $S_n$.

 We first deal with part (a). Suppose for a contradiction that
 $$\Pr_{\mathcal{R}} \left[\Pr_{T_{X,Y}}[|g(X,Y)| \notin \upto{1}{\error^{1/7}}] \geq \error^{4/7}\right] > \error^{1/7}.$$
 It follows that $\Pr[|f_1| \notin \upto{1}{\error^{1/7}}] > \error^{5/7}$. This implies that
\[ \EE[(f_1-f)^2] \geq \EE[(|f_1|-1)^2] > \error^{5/7} (\error^{1/7})^2 = \error. \]
 This contradicts $\EE[(f_1-f)^2]=\error$, proving the claim for part (a).

 For part (b), we use \autoref{lem:e-moments}, which gives the mean and variance (with respect to $\mathcal{R}$) of $\EE_{T_{X,Y}}[g_1(X,Y)]$. We have
\[ \Pr_{\mathcal{R}}\left[\left|\EE_{T_{X,Y}}[g_1(X,Y)]\right| \geq \frac{1}{\error^{1/14}\sqrt{n}}\right] \leq \frac{\error^{1/7}n}{2n} = \frac{1}{2} \error^{1/7}, \]
using Chebyshev's inequality. Assumption~\ref{eq:e-assumption} states that $\error \geq 1/n^{7/3}$. Hence $\error^{3/14} \geq 1/\sqrt{n}$, and so $\error^{1/7} \geq \error^{-1/14}/\sqrt{n}$.
 Therefore,
 \[ \Pr_{\mathcal{R}}\left[\left|\EE_{T_{X,Y}}[g_1(X,Y)]\right| \geq \error^{1/7}\right] \leq \frac{1}{2} \error^{1/7}.\]
 Since $(\overline{X},\overline{Y}) \sim \rand$, the same holds for $g_2(X,Y)$. The claim for part (b) follows, using a union bound.

 For part (c), the starting point is
\[ \EE[(|f_1|-1)^2] \leq \EE[(f_1-f)^2] = \error. \]
 Therefore,
$$ \EE_{\mathcal{R}}\left[\EE_{T_{X,Y}}[(|g(X,Y)|-1)^2]\right] = \EE[(|f_1|-1)^2] \leq \error.$$
 The claim now follows from Markov's inequality.
\end{proof}

We call a restriction {\em typical} if it satisfies the properties (a), (b) and (c) in \autoref{lem:typical-restriction}.

\subsection{Decomposition under a typical restriction} \label{sec:decomposition}

In this subsection, we show that if $(X,Y)$ is a typical restriction (meaning a restriction satisfying the properties listed in \autoref{lem:typical-restriction}), then the functions $g_1(X,Y)$ and $g_2(X,Y)$ have a particularly simple structure: up to translation, one of them is almost constant, and the other is almost Boolean. 

Many of the lemmas in this subsection start by assuming that a particular restriction $(X,Y)$ is typical. In these lemmas, we will write $g,g_1,g_2$ for $g(X,Y),g_1(X,Y),g_2(X,Y)$.

The following technical lemma tackles the following situation. Suppose that some function $\phi$ is almost close to~$C_0$. Can we deduce that $C_0 \approx \EE \phi$? The lemma gives a sufficient condition (in the case $C_0=0$).
\begin{lemma} \label{lem:mean-mode}
 Suppose that a function $\phi$ on a probability space satisfies the following properties:
\begin{enumerate}[(a)]
 \item The function $\phi$ is $(p,\epsilon)$-almost close to 0.
 \item There exists $C \in \mathbb{R}$ such that $\EE[(|\phi + C|-1)^2] \leq \delta$.
\end{enumerate}
 Then 
 \[
  |\EE[\phi]| \leq 3\epsilon + 3p + 6\sqrt{\frac{\delta}{1-p}}. \qedhere
 \]
\end{lemma}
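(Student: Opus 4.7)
The plan is to first force $C$ close to $\pm 1$ using both hypotheses, and then decompose $\phi+C = \sgn(\phi+C)\,|\phi+C|$, exploiting that $\sgn(\phi+C)$ is essentially constant on the good event $A = \{|\phi|\leq\epsilon\}$ from hypothesis (a). Replacing $(\phi,C)$ with $(-\phi,-C)$ preserves both hypotheses and negates $\EE[\phi]$, so I may assume $C \geq 0$.

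\textbf{Step 1: $|C-1|\leq \beta := \epsilon + \sqrt{\delta/(1-p)}$.} On $A$, the reverse triangle inequality, applied twice, yields
\[
\bigl||\phi+C|-1\bigr| \;\geq\; |C-1| - \bigl||\phi+C|-C\bigr| \;\geq\; |C-1|-|\phi| \;\geq\; |C-1|-\epsilon.
\]
Squaring, restricting to $A$, and invoking hypothesis (b) gives $(1-p)\,(|C-1|-\epsilon)_+^2 \leq \delta$, whence the claim.

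\textbf{Step 2: main case $C>\epsilon$.} Here $\phi+C \geq C-\epsilon > 0$ on $A$, so $\sgn(\phi+C)=1$ on $A$. Writing $\psi := |\phi+C|-1$ (so $\EE[\psi^2]\leq\delta$),
\[
\EE[\phi+C] \;=\; \EE\bigl[\sgn(\phi+C)(1+\psi)\bigr] \;=\; \EE[\sgn(\phi+C)] + \EE[\sgn(\phi+C)\,\psi].
\]
The first term lies in $[1-2p,1]$: on $A$ the sign equals $1$ (contributing $\Pr[A]\geq 1-p$), and on $A^c$ the sign is bounded by $1$ in absolute value (contributing at most $p$ in magnitude). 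The second term is bounded by $\sqrt{\delta}$ via Cauchy--Schwarz. Hence $|\EE[\phi+C]-1|\leq 2p+\sqrt{\delta}$, and
\[
|\EE[\phi]| \;\leq\; |\EE[\phi+C]-1| + |1-C| \;\leq\; 2p + \sqrt{\delta} + \beta \;\leq\; \epsilon + 2p + 2\sqrt{\delta/(1-p)},
\]
well within the claimed bound. In the edge case $0\leq C\leq \epsilon$, combining $C\leq \epsilon$ with $C\geq 1-\beta$ forces $\sqrt{\delta/(1-p)}\geq 1-2\epsilon$, so the target bound $3\epsilon+3p+6\sqrt{\delta/(1-p)}$ comfortably absorbs the crude estimate $|\EE[\phi]|\leq \||\phi+C|\|_2 + C \leq 1+\sqrt{\delta}+\epsilon$.

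The main obstacle is obtaining the \emph{linear} $p$-dependence rather than $\sqrt{p}$: a naive split of $\EE[\phi]$ on $A$ versus $A^c$ followed by Cauchy--Schwarz produces an unwanted $\sqrt{p}\,\|\phi\|_2$ term, which is much weaker. Routing through $\sgn(\phi+C)$ in Step 2 circumvents this because the sign is uniformly bounded by $1$, even on $A^c$ where $\phi$ itself may not be controlled.
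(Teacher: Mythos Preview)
Your proof is correct and in fact cleaner than the paper's. Both arguments begin identically by establishing $|C-1|\leq\epsilon+\sqrt{\delta/(1-p)}$ (your derivation via the reverse triangle inequality is tidier than the paper's three-case split), but then diverge. The paper sets $\psi=\phi+C-1$ and splits $\EE[\psi]$ over the three events $\{|\phi|\leq\epsilon\}$, $\{|\phi|>\epsilon,\ |\phi+C|\leq 2\}$, and $\{|\phi+C|\geq 2\}$; the last of these is handled by the pointwise estimates $|t|\leq t^2$ for $|t|\geq 1$ and $|t-1|\leq 3||t|-1|$ for $t\leq -2$, yielding $|\EE[\phi]|\leq 3\epsilon+3p+2\sqrt{\delta/(1-p)}+4\delta$. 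Your decomposition $\phi+C=\sgn(\phi+C)\,(1+\psi)$ with $\psi=|\phi+C|-1$ is the key difference: since the sign is identically $1$ on $A$ (when $C>\epsilon$) and uniformly bounded by $1$ on $A^c$, you get the linear $p$-term directly, and a single Cauchy--Schwarz disposes of the $\psi$-contribution. This avoids the case analysis and the tailored pointwise inequalities, and actually yields a sharper bound $\epsilon+2p+2\sqrt{\delta/(1-p)}$ in the main case. The edge-case verification (that $1+\sqrt{\delta}+\epsilon$ is dominated by $3\epsilon+3p+6\sqrt{\delta/(1-p)}$ once $\sqrt{\delta/(1-p)}\geq 1-2\epsilon$) is routine but would benefit from one line of justification rather than ``comfortably absorbs''.
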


\begin{remark}
Condition (b) says that $\phi+C$ is almost Boolean in the $L^2$ sense. As we shall see, conditions (a) and (b) together imply that $C$ must be close to 1 or close to $-1$.
\end{remark}

\begin{proof}
 Without loss of generality, we may assume that $C \geq 0$. We start by establishing the bound
 \begin{equation} \label{eq:mean-mode-c-bound}
  |1-C| \leq \epsilon + \sqrt{\frac{\delta}{1-p}}.  
 \end{equation}
 We distinguish between three cases: $C < 1-\epsilon$, $C > 1+\epsilon$ and $|1-C| \leq \epsilon$. In the latter case, we already have the desired bound.

 Suppose $C < 1-\epsilon$. Whenever $|\phi| \leq \epsilon$, we have
 \[ 1 - |\phi + C| \geq 1 - C - \epsilon > 0. \]
 Since this happens with probability at least $1-p$, we deduce that $(1-p)(1-C-\epsilon)^2 \leq \delta$, verifying~\eqref{eq:mean-mode-c-bound}.
 
 Suppose next that $C > 1 + \epsilon$. Whenever $|\phi| \leq \epsilon$, we have
 \[ |\phi + C| - 1 \geq C - \epsilon - 1 > 0. \]
 Since this happens with probability at least $1-p$, we deduce that $(1-p)(C-\epsilon-1)^2 \leq \delta$, again verifying~\eqref{eq:mean-mode-c-bound}.
 This completes the proof of~\eqref{eq:mean-mode-c-bound}.

 When $|t| \geq 1$, we have $|t| \leq t^2$, and so
 \[
  \EE[||\phi + C|-1|\charf{\{\phi+C \geq 2\}}] \leq \EE[(|\phi+C|-1)^2\charf{\{\phi+C \geq 2\}}] \leq \delta.
 \]
 The triangle inequality implies that
 \[
  \left| \EE[(\phi+C-1)\charf{\{\phi+C \geq 2\}}] \right| \leq
  \EE[|\phi+C-1|\charf{\{\phi+C \geq 2\}}] =
  \EE[||\phi+C|-1|\charf{\{\phi+C \geq 2\}}] \leq \delta.
 \]
 When $t \leq -2$, we have $|t-1| = 1-t \leq 3(-t-1) = 3 (|t|-1) = 3||t|-1|$, and so, as before,
 \[
  \EE[|\phi+C-1|\charf{\{\phi+C \leq -2\}}] \leq 3\EE[||\phi+C|-1|\charf{\{\phi+C \leq -2\}}] \leq 3\delta.
 \]
 The triangle inequality implies that
 \[
  \left| \EE[(\phi+C-1)\charf{\{\phi+C \leq -2\}}] \right| \leq 3\delta.  
 \]
 Combining the two together, we get
 \[
  \left| \EE[(\phi+C-1)\charf{\{|\phi+C| \geq 2\}}] \right| \leq 4\delta.  
 \]
 
 Define $\psi = \phi + C - 1$. Rewriting the last inequality in terms of $\psi$, we have
 \[
  \left| \EE[\psi\charf{\{|\psi+1| \geq 2\}}] \right| \leq 4\delta.  
 \]
 When $|\psi+1| \leq 2$, $|\psi| \leq 3$. When $|\phi| \leq \epsilon$, we have $|\psi| \leq |1-C| + \epsilon \leq 2\epsilon + \sqrt{\delta/(1-p)}$. Therefore
 \begin{align*}
  |\EE[\psi]| &\leq
  \left| \EE[\psi\charf{\{|\phi| \leq \epsilon\}}] \right| +
  \left| \EE[\psi\charf{\{|\phi| > \epsilon \text{ and }|\psi+1| \leq 2\}}] \right| +
  \left| \EE[\psi\charf{\{|\psi+1| \geq 2\}}] \right| \\ &\leq
  2\epsilon + \sqrt{\frac{\delta}{1-p}} + 3p + 4\delta. 
\end{align*}
 We conclude that
 \[
  |\EE[\phi]| \leq |\EE[\psi]| + |1-C| \leq 3\epsilon + 2\sqrt{\frac{\delta}{1-p}} + 3p + 4\delta. \qedhere
 \]
\end{proof}

Our first key step is the following lemma, which uses the fact that $g_1(X,Y)$ and $g_2(X,Y)$ are independent pieces of $g(X,Y)$ to deduce that, up to translation, both are close to Boolean. Moreover, at least one of them is close to being constant.

We will use the following notation, when a restriction $(X,Y)$ is understood. For a permutation $\pi$, $\pi_1 = \pi|_X$ denotes its restriction to $X$, and $\pi_2 = \pi|_{\overline{X}}$ denotes its restriction to $\overline{X}$. Thus, $g_1$ depends only upon $\pi_1$, and $g_2$ depends only upon $\pi_2$.

\begin{lemma} \label{lem:deltas}
 Suppose $(X,Y)$ is a typical restriction. Choose $\alpha,\beta$ uniformly at random from $T_{X,Y}$. Then with probability at least $1-8\error^{2/7}$, one of the following three cases holds:
 \begin{enumerate}[(a)]
  \item $|g_1(\alpha_1) - g_1(\beta_1)|$ and $|g_2(\alpha_2) - g_2(\beta_2)| \leq 2\error^{1/7}$.
  \item $|g_1(\alpha_1) - g_1(\beta_1)| \leq 2\error^{1/7}$ and $|g_2(\alpha_2) - g_2(\beta_2)| \in \upto{2}{2\error^{1/7}}$.
  \item $|g_1(\alpha_1) - g_1(\beta_1)| \in \upto{2}{2\error^{1/7}}$ and $|g_2(\alpha_2) - g_2(\beta_2)| \leq 2\error^{1/7}$.
 \end{enumerate}
\end{lemma}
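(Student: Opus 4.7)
The approach exploits a \emph{swap trick} that uses the product structure of the uniform measure on $T_{X,Y}$. Because $\pi \mapsto (\pi_1, \pi_2)$ identifies $T_{X,Y}$ with $\mathrm{Bij}(X, Y) \times \mathrm{Bij}(\overline X, \overline Y)$, the four restrictions $\alpha_1, \beta_1, \alpha_2, \beta_2$ are mutually independent when $\alpha, \beta$ are drawn iid uniformly from $T_{X, Y}$. In particular, the swap partners $\alpha' = (\alpha_1, \beta_2)$ and $\beta' = (\beta_1, \alpha_2)$ are also iid uniform on $T_{X,Y}$. Applying the almost-Boolean property \ref{lem:typical-restriction}(a) of a typical restriction to each of the four marginals $\alpha, \beta, \alpha', \beta'$, a union bound shows that with probability at least $1 - 4\error^{4/7} \geq 1 - 8\error^{2/7}$ (for $\error \leq 1$), all four of $g(\alpha), g(\beta), g(\alpha'), g(\beta')$ lie within $\error^{1/7}$ of $\pm 1$. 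Call this the \emph{good event}; on it, every pairwise difference lies within $2\error^{1/7}$ of some element of $\{-2, 0, 2\}$.

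Setting $u = g_1(\alpha_1)$, $v = g_1(\beta_1)$, $x = g_2(\alpha_2)$, $y = g_2(\beta_2)$, the four values of $g$ are $u+x, v+y, u+y, v+x$ respectively, so reading off the four differences of interest yields
\[
\Delta_1 = g(\alpha) - g(\beta'), \quad \Delta_2 = g(\alpha) - g(\alpha'), \quad \Delta_1 + \Delta_2 = g(\alpha) - g(\beta), \quad \Delta_1 - \Delta_2 = g(\alpha') - g(\beta').
\]
So on the good event each of $\Delta_1, \Delta_2, \Delta_1 + \Delta_2, \Delta_1 - \Delta_2$ is within $2\error^{1/7}$ of $\{-2, 0, 2\}$. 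Writing $\Delta_i = d_i + \varepsilon_i$ with $d_i \in \{-2, 0, 2\}$ and $|\varepsilon_i| \leq 2\error^{1/7}$, the constraints on $\Delta_1 \pm \Delta_2$ force $d_1 \pm d_2 \in \{-2, 0, 2\}$, since once $\error$ is small enough that $6\error^{1/7} < 2$ the nearest element of $\{-2, 0, 2\}$ to an integer in $\{-4, -2, 0, 2, 4\}$ within that slack is unique. Among the nine pairs $(d_1, d_2) \in \{-2, 0, 2\}^2$, only $(0, 0), (0, \pm 2), (\pm 2, 0)$ satisfy both conditions; these correspond exactly to the three cases \textup{(a)}, \textup{(b)}, \textup{(c)} of the lemma (up to the stated $2\error^{1/7}$ slack in $|\Delta_1|$ and $|\Delta_2|$).

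The main conceptual point, and the step most deserving of care, is the swap itself: the single difference $g(\alpha) - g(\beta) = \Delta_1 + \Delta_2$ is not sufficient to rule out the spurious configurations $(\Delta_1, \Delta_2) \approx (\pm 2, \mp 2)$ in which both $|\Delta_1|$ and $|\Delta_2|$ are close to $2$ but their sum cancels to $0$. It is only the additional independent equation $g(\alpha') - g(\beta') = \Delta_1 - \Delta_2$, supplied by the swap trick, that eliminates those configurations. Once both equations are on the table, the remainder is a routine union bound plus the finite case check above.
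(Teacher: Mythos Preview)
Your proof is correct and follows essentially the same approach as the paper: both exploit the product structure of $T_{X,Y}$ to observe that the four ``mixed'' permutations $(\alpha_1,\alpha_2)$, $(\alpha_1,\beta_2)$, $(\beta_1,\alpha_2)$, $(\beta_1,\beta_2)$ are each marginally uniform, apply the almost-Boolean property to each via a union bound, and then carry out a finite case analysis to rule out the spurious $(\pm 2,\mp 2)$ configurations. Your presentation is in fact slightly cleaner: you apply typicality directly to the four marginals to get a $4\error^{4/7}$ failure probability, whereas the paper interposes a Markov-type step (splitting $\error^{4/7}$ as $\error^{2/7}\cdot\error^{2/7}$ over $\pi_1$ and $\pi_2$) that weakens the bound to $8\error^{2/7}$ before the union bound; and your abstract case check on $(d_1,d_2)\in\{-2,0,2\}^2$ is equivalent to, but more compact than, the paper's explicit sign-by-sign casework on $x,y,z,w$.
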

\begin{proof}
Typicality implies that $\Pr[||g|-1| > \error^{1/7}] < \error^{4/7}$. This implies that with probability at least $1-\error^{2/7}$ over the choice of $\pi_1$, it is true that $\Pr_{\pi_2}[||g(\pi)|-1| > \error^{1/7}] < \error^{2/7}$. So with probability at least $1-2\error^{2/7}$ over the choice of $(\pi_1,\pi_2)$, it is true that $|g(\pi)|$ is $\error^{1/7}$-close to~$1$. Thus with probability at least $1-8\error^{2/7}$ over the choice of $\alpha,\beta$, all of the following are $\error^{1/7}$-close in magnitude to~$1$:
\begin{align*}
x &= g_1(\alpha_1) + g_2(\alpha_2), & y &= g_1(\alpha_1) + g_2(\beta_2), \\ z &= g_1(\beta_1) + g_2(\alpha_2), & w &= g_1(\beta_1) + g_2(\beta_2).
\end{align*}
Since $\error$ is small enough, each of these four values is unambiguously close to either~$1$ or~$-1$.

If $x,y,z$ are all close to the same value, then
\[ |g_1(\alpha_1) - g_1(\beta_1)|,|g_2(\alpha_2) - g_2(\beta_2)| \leq 2\error^{1/7}.\]

If $x$ and $y$ are close to different values, then
\[ |g_2(\alpha_2) - g_2(\beta_2)| \in \upto{2}{2\error^{1/7}}. \]
Without loss of generality, we may assume that $x$ is close to~$1$ and $y$ is close to~$-1$. Then $g_2(\alpha_2) - g_2(\beta_2) \in \upto{2}{2\error^{1/7}}$. If $z$ is close to~$-1$, then $g_1(\alpha_1) - g_1(\beta_1) \in \upto{2}{2\error^{1/7}}$. But this implies that
\[ w = x - (g_1(\alpha_1) - g_1(\beta_1)) - (g_2(\alpha_2) - g_2(\beta_2)) \in \upto{-3}{5\error^{1/7}}. \]
Since $\error$ is small enough, $-3 + 5\error^{1/7} < -1-\error^{1/7}$, and we reach a contradiction. So when $x$ and $y$ are close to different values, $x$ and $z$ must be close to the same value. This implies that
\[ |g_1(\alpha_1) - g_1(\beta_1)| \leq 2\error^{1/7}.\]

If $x$ and $z$ are close to different values, then we similarly obtain
\[
|g_1(\alpha_1) - g_1(\beta_1)| \in \upto{2}{2\error^{1/7}}, \quad
|g_2(\alpha_2) - g_2(\beta_2)| \leq 2\error^{1/7}.
\]
These cases are exhaustive.
\end{proof}

The preceding lemma shows that for most choices of $\alpha,\beta$, either both $g_1$ and $g_2$ act as if they were constant, or one acts as if it were constant, and the other acts as if it were Boolean, up to translation. The following lemma, which is the main result of this section, shows that in fact, one is almost zero, and the other is almost Boolean.

\begin{lemma} \label{lem:decomposition}
 Suppose $(X,Y)$ are typical restrictions. Then either $g_1$ is $(3\error^{1/7},19\error^{1/7})$-almost close to zero and $g_2$ is $(4\error^{1/7},24\error^{1/7})$-almost Boolean, or the same is true with the roles of $g_1$ and $g_2$ reversed.
\end{lemma}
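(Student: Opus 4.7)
The basic strategy is to translate the three cases of \autoref{lem:deltas} into structural information about $g_1$ and $g_2$ separately, exploiting the fact that on $T_{X,Y}$ with the uniform measure, $g_1$ and $g_2$ are \emph{independent} random variables (since $\alpha_1 = \alpha|_X$ and $\alpha_2 = \alpha|_{\overline{X}}$ are independent for uniform $\alpha \in T_{X,Y}$, and similarly for $\beta$). Define
\[
u = \Pr\bigl[|g_1(\alpha_1) - g_1(\beta_1)| \leq 2\error^{1/7}\bigr], \quad v = \Pr\bigl[|g_1(\alpha_1) - g_1(\beta_1)| \in \upto{2}{2\error^{1/7}}\bigr],
\]
and $w = 1 - u - v$; define $u', v', w'$ analogously for $g_2$. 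By independence, the three cases of \autoref{lem:deltas} have probabilities $p_a = uu'$, $p_b = uv'$, $p_c = vu'$. Expanding $1 = (u+v+w)(u'+v'+w')$ and subtracting $p_a + p_b + p_c \geq 1 - 8\error^{2/7}$ exhibits the remaining six products (notably $vv'$, $wu'$, $uw'$) as nonnegative summands, each at most $8\error^{2/7}$.

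From $vv' \leq 8\error^{2/7}$ we deduce $\min(v,v') \leq \sqrt{8}\error^{1/7}$; without loss of generality $v \leq 3\error^{1/7}$, i.e.\ case (c) is rare. Combined with $w = O(\error^{2/7})$ (from $wu' \leq 8\error^{2/7}$, once we verify $u'$ is bounded below by, say, $1/2$), this yields the pairwise-closeness bound $\Pr[|g_1(\alpha_1) - g_1(\beta_1)| > 2\error^{1/7}] = v + w = O(\error^{1/7})$. A Markov argument then produces a pivot $\alpha_1^*$ for which $g_1$ is almost-close to $C_1 := g_1(\alpha_1^*)$. Having fixed such a pivot, typicality~(a) applied to $g(\alpha_1^*, \pi_2)$ forces $g_2$ to be concentrated around $\pm 1 - C_1$, i.e.\ almost Boolean up to the translation $C_1$.

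To conclude, one must show $|C_1| = O(\error^{1/7})$, so that $g_1$ is almost-close to $0$ and $g_2$ is genuinely almost Boolean. Here I would invoke \autoref{lem:mean-mode} with $\phi := g_1 - C_1$: hypothesis~(a) comes from the concentration just established, while hypothesis~(b) is obtained by using typicality~(c) and Markov to fix a ``test tail'' $\pi_2^*$ with $\EE_{\pi_1}[(|g_1(\pi_1) + g_2(\pi_2^*)| - 1)^2]$ small, then taking $C := C_1 + g_2(\pi_2^*)$ so that $\phi + C = g_1 + g_2(\pi_2^*)$ is almost Boolean in the $L^2$ sense. \autoref{lem:mean-mode} then bounds $|\EE[g_1] - C_1|$, and combining with typicality~(b) (which gives $|\EE[g_1]| \leq \error^{1/7}$) yields $|C_1| = O(\error^{1/7})$. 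The main obstacle is constant tracking: a naive Markov application on the pair-closeness bound delivers only an $O(\error^{1/14})$ failure probability, whereas the statement demands $O(\error^{1/7})$. Rectifying this forces one to trade a looser closeness parameter for a better probability --- which is the origin of the seemingly generous constants $19\error^{1/7}$ and $24\error^{1/7}$ in the conclusion.
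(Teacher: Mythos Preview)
Your approach has the right skeleton—use \autoref{lem:deltas} plus independence to force one of $g_1,g_2$ to cluster near a single value, then pin that value to $0$ via \autoref{lem:mean-mode} and typicality~(b),(c)—and matches the paper on the $g_1$ side. Two points deserve correction, though.

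First, the step ``typicality~(a) applied to $g(\alpha_1^*,\pi_2)$'' does not work as written. Typicality~(a) is a statement about the uniform measure on $T_{X,Y}$; the event $\{\pi_1=\alpha_1^*\}$ has measure $1/|X|!$, and conditioning on it can destroy the $\error^{4/7}$ bound entirely. The fix is easy and in fact \emph{simpler} than the paper's route: on the event $\{|g_1-C_1|\leq 2\error^{1/7}\}\cap\{||g|-1|\leq\error^{1/7}\}$, which has probability at least $1-3\error^{1/7}-\error^{4/7}$, one has $g_2=g-g_1$ within $3\error^{1/7}$ of $\pm 1-C_1$; once $|C_1|\leq 17\error^{1/7}$ is established this yields $g_2$ $(4\error^{1/7},20\error^{1/7})$-almost Boolean directly. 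By contrast, the paper treats $g_2$ by a second pass through \autoref{lem:deltas} (concentration on three values $\{C_3-2,C_3,C_3+2\}$, then on two adjacent ones), followed by a second application of \autoref{lem:mean-mode} together with a sign-balancing argument to force the resulting center $C_4$ to $0$.

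Second, your worry about ``only $O(\error^{1/14})$'' from Markov is misplaced. No tradeoff is needed: from $\EE_{\alpha}\bigl[\Pr_{\beta}[|g_1(\alpha_1)-g_1(\beta_1)|>2\error^{1/7}]\bigr]\leq 3\error^{1/7}$ you immediately get the existence of a pivot $\alpha_1^*$ with $\Pr_{\beta}[\cdot]\leq 3\error^{1/7}$, since the minimum is at most the mean. This is exactly how the paper obtains the $(3\error^{1/7},2\error^{1/7})$ closeness to $C_1$, and it is why the constants $19$ and $24$ arise from adding $|C_1|$ rather than from any square-root loss. Relatedly, your bound on $w$ needs no appeal to $u'\geq 1/2$: since $wu'+wv'+ww'\leq 24\error^{2/7}$ and $u'+v'+w'=1$, one has $w\leq 24\error^{2/7}$ unconditionally. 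The paper bypasses this bookkeeping altogether by noting that if both differences exceed $2\error^{1/7}$ then none of cases~(a)--(c) holds, so independence gives $(1-u)(1-u')\leq 8\error^{2/7}$ in a single stroke.
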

\begin{proof}
 Define
 \begin{align*}
  p_1 &= \Pr_{\alpha,\beta \in T_{X,Y}}[|g_1(\alpha_1)-g_1(\beta_1)| > 2\error^{1/7}], \\
  p_2 &= \Pr_{\alpha,\beta \in T_{X,Y}}[|g_2(\alpha_2)-g_2(\beta_2)| > 2\error^{1/7}].
 \end{align*}
 \autoref{lem:deltas} implies that $p_1 p_2 \leq 8\error^{2/7}$. Thus, either $p_1 \leq 3\error^{1/7}$ or $p_2 \leq 3\error^{1/7}$. Without loss of generality, we may assume that $p_1 \leq 3\error^{1/7}$.

 A simple averaging argument shows that for some choice of $\alpha$, we have
 \[ \Pr_{\beta \in T_{X,Y}}[|g_1(\alpha_1)-g_1(\beta_1)| \leq 2\error^{1/7}] \geq 1 - 3\error^{1/7}. \]
 Therefore, putting $C_1 = g_1(\alpha_1)$, we deduce that $g_1$ is $(3\error^{1/7},2\error^{1/7})$-almost close to~$C_1$.

 Typicality implies that $g = g_1 + g_2$ satisfies $\EE[(|g|-1)^2] \leq \error^{6/7}$. This must be true for some value~$C_2$ of~$g_2$. The function $g_1 - C_1$ is $(3\error^{1/7},2\error^{1/7})$-almost close to zero, and so we can apply \autoref{lem:mean-mode}, with the following parameters:
 \[ \phi \eqdef g_1 - C_1, \quad p \eqdef 3\error^{1/7}, \quad \epsilon \eqdef 2\error^{1/7}, \quad \delta \eqdef \error^{6/7}, \quad C \eqdef C_1 + C_2. \]
 Since $\error$ is small enough, \autoref{lem:mean-mode} implies that
 \[
  |\EE[g_1] - C_1| \leq 6\error^{1/7} + 9\error^{1/7} + 6\sqrt{\frac{\error^{6/7}}{1-3\error^{1/7}}} =
  15\error^{1/7} + O(\error^{3/7}) \leq 16\error^{1/7}.
 \]
 On the other hand, by typicality, $|\EE[g_1]| \leq \error^{1/7}$. Therefore
 \[ |C_1| \leq 17\error^{1/7}. \]
 We conclude that $g_1$ is $(3\error^{1/7},19\error^{1/7})$-almost close to zero.

 We now turn our gaze to $g_2$.
 \autoref{lem:deltas} implies that with probability at least $1-8\error^{2/7}$ over the choice of $\alpha,\beta$, it holds that $|g_2(\alpha_2)-g_2(\beta_2)| \in \upto{\{0,2\}}{2\error^{1/7}}$. A simple averaging argument shows that for some choice of $\alpha$, it holds that
 \[ \Pr_{\beta \in T_{X,Y}} [|g_2(\alpha_2)-g_2(\beta_2)| \notin \upto{\{0,2\}}{2\error^{1/7}}] \leq 8\error^{2/7}. \]
 Let $C_3 = g_2(\alpha_2)$. Then $g_2$ is concentrated on the three values $\{C_3-2,C_3,C_3+2\}$. Another application of \autoref{lem:deltas} will show that it is actually concentrated either on $\{C_3-2,C_3\}$ or on $\{C_3,C_3+2\}$.
 Define
 \begin{align*}
  q_1 &= \Pr_{\beta \in T_{X,Y}} [g_2(\beta_2) \in \upto{C_3+2}{2\error^{1/7}}], \\
  q_2 &= \Pr_{\gamma \in T_{X,Y}} [g_2(\gamma_2) \in \upto{C_3-2}{2\error^{1/7}}].
 \end{align*}
 When $g_2(\beta_2) \in \upto{C_3+2}{2\error^{1/7}}$ and $g_2(\gamma_2) \in \upto{C_3-2}{2\error^{1/7}}$, we have $|g_2(\beta_2) - g_2(\gamma_2)| \in \upto{4}{4\error^{1/7}}$. In particular, since $\error$ is small enough, in this case $\beta_2,\gamma_2$ satisfy none of the options presented by \autoref{lem:deltas}. Hence, we must have $q_1 q_2 \leq 8\error^{2/7}$. Therefore, either $q_1 \leq 3\error^{1/7}$ or $q_2 \leq 3\error^{1/7}$. Without loss of generality, we may assume that $q_1 \leq 3\error^{1/7}$. Putting $C_4 = C_3 - 1$, we conclude that $g_2 - C_4$ is $(4\error^{1/7},2\error^{1/7})$-almost Boolean. (Here, we used the estimate $3\error^{1/7} + 8\error^{2/7} \leq 4\error^{1/7}$, true since $\error$ is small enough.) Our task is now to show that $C_4$ is close to zero.

 Since $g_1$ is $(3\error^{1/7},19\error^{1/7})$-almost close to zero, it follows that $g - C_4$ is $(7\error^{1/7},21\error^{1/7})$-almost Boolean. By typicality, $g$ is $(\error^{4/7},\error^{1/7})$-almost Boolean. Therefore, with probability at least $1-7\error^{1/7}-\error^{4/7} \geq 1-8\error^{1/7}$ over the choice of $\pi \in T_{X,Y}$,
 \begin{equation} \label{eq:simul-bool}
  g(\pi) \in \upto{\{C_4 \pm 1\}}{21\error^{1/7}} \text{ and } g(\pi) \in \upto{\{\pm 1\}}{\error^{1/7}}. 
 \end{equation}
 Suppose that $\pi_+ \in T_{X,Y}$ satisfies~\eqref{eq:simul-bool} with $g(\pi_+)$ $\error^{1/7}$-close to~$1$. Then either $C_4$ is $22\error^{1/7}$-close to zero, or it is $22\error^{1/7}$-close to~$2$.
 Similarly, if $\pi_- \in T_{X,Y}$ satisfies~\eqref{eq:simul-bool} with $g(\pi_-)$ $\error^{1/7}$-close to~$-1$, then either $C_4$ is $22\error^{1/7}$-close to zero, or it is $22\error^{1/7}$-close to~$-2$. Since $\error$ is small enough, if such permutations $\pi_+,\pi_-$ exist, then we can conclude that $|C_4| \leq 22\error^{1/7}$. That would complete the proof of the lemma.

 It remains to rule out the case that for all permutations satisfying~\eqref{eq:simul-bool}, $g(\pi)$ has the same sign. That would imply that $g$ is $(8\error^{1/7},\error^{1/7})$-almost close to~$L$, where $L \in \{ \pm 1 \}$. We apply \autoref{lem:mean-mode}, with the following parameters:
 \[ \phi \eqdef g - C, \quad p \eqdef 8\error^{1/7}, \quad \epsilon \eqdef \error^{1/7}, \quad \delta \eqdef \error^{6/7}, \quad C \eqdef L. \]
 Since $\error$ is small enough, the lemma implies that
 \[
  |\EE[g]-L| \leq 3\error^{1/7} + 24\error^{1/7} + 6\sqrt{\frac{\error^{6/7}}{1-9\error^{1/7}}} \leq 28\error^{1/7}.
 \]
 Therefore $\EE[g]$ is $28\error^{1/7}$-close to $L$. On the other hand, typicality implies that $\EE[g]$ is $2\error^{1/7}$-close to zero. We deduce that $L$ is $30\error^{1/7}$-close to zero. Since $L \in \{ \pm 1 \}$ and $\error$ is small enough, this is a contradiction.
\end{proof}

\subsection{Random partitions} \label{sec:partition}
Subsection~\ref{sec:decomposition} deals with random restrictions $(X,Y)$. The main result, \autoref{lem:decomposition}, shows that with large probability, in the decomposition $g(X,Y) = g_1(X,Y) + g_2(X,Y)$, one of the functions is almost constant, and the other is almost Boolean. In this subsection, we switch the order of the random choices, and deduce a property of random permutations.

We will need the following classical theorem due to Esseen~\cite{Esseen}. For a modern proof, see~\cite[4.1.b]{ProbIneq}. Note we require Esseen's version, rather than Berry's slightly weaker result~\cite{Berry}.
\begin{theorem}[Berry-Esseen]
 Let $X_1,\ldots,X_n$ be independent random variables with finite third moments, and let $S$ be their sum. Define
 \[ \psi = \frac{\sum_{i=1}^n \EE[|X_i-\EE X_i|^3]}{\left(\sum_{i=1}^n \VV[X_i] \right)^{3/2}}. \]
 Let $N$ be a normal random variable with the same mean and variance as $S$. Then $S$ and $N$ are $C_0\psi$-close in distribution, where $C_0 < 1$ is an absolute constant. In other words, for every $t \in \RR$,
 \[ |\Pr[S < t] - \Pr[N < t]| \leq C_0 \psi.\]
\end{theorem}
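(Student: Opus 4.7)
The plan is to follow the classical Fourier-analytic route originally due to Esseen, reducing the problem to estimating the characteristic function of $S$ and comparing it to that of the Gaussian. First, I would normalize so that $\mathbb{E} S = 0$ and $\operatorname{Var} S = 1$ by rescaling each $X_i$ by $\sqrt{\sum_j \operatorname{Var} X_j}$; the quantity $\psi$ is invariant under this rescaling, because the cube in the denominator exactly cancels the rescaling of the third-moment numerator. After this reduction, the goal becomes $\sup_t |F_S(t) - \Phi(t)| \leq C_0 \psi$, where $\Phi$ is the standard normal CDF and $F_S$ the CDF of $S$.

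The core tool is Esseen's smoothing inequality: for any distribution $F$ and any $T>0$,
\[
\sup_t |F(t) - \Phi(t)| \;\leq\; \frac{1}{\pi} \int_{-T}^{T} \left| \frac{\phi_F(u) - e^{-u^2/2}}{u} \right| du \;+\; \frac{C_1}{T},
\]
where $\phi_F$ is the characteristic function of $F$ and $C_1$ absorbs a factor of $\sup|\Phi'| = 1/\sqrt{2\pi}$. This reduces everything to pointwise control of $|\phi_S(u) - e^{-u^2/2}|$.

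Since the $X_i$ are independent, $\phi_S(u) = \prod_i \phi_{X_i}(u)$. A Taylor expansion with integral remainder gives
\[
\phi_{X_i}(u) = 1 - \tfrac{\sigma_i^2 u^2}{2} + R_i(u), \qquad |R_i(u)| \leq \tfrac{|u|^3}{6}\, \rho_i,
\]
where $\sigma_i^2 = \operatorname{Var} X_i$ and $\rho_i = \mathbb{E}|X_i - \mathbb{E} X_i|^3$. Using Lyapunov's inequality $\sigma_i^3 \leq \rho_i$, the relation $\sum \sigma_i^2 = 1$ forces $\psi \geq 1/\sqrt{n}$-style control, and in the range $|u| \leq c/\psi^{1/3}$ (with $c$ a small absolute constant) every factor is bounded away from $0$. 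Taking logarithms and summing, one obtains
\[
\left| \log \phi_S(u) + \tfrac{u^2}{2} \right| \leq C_2 \,\psi\, |u|^3,
\]
and exponentiating yields $|\phi_S(u) - e^{-u^2/2}| \leq C_3\, \psi\, |u|^3\, e^{-u^2/4}$ in this same range. I would then pick $T$ of order $1/\psi$: the first term in the smoothing inequality becomes $O(\psi)$ after integrating, since the Gaussian factor $e^{-u^2/4}$ tames the $|u|^2$ growth from the $|u|^3/|u|$, while the boundary term $C_1/T$ is itself $O(\psi)$. Optimizing the constants gives an absolute $C_0$.

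The main obstacle is not the qualitative bound but the quantitative one: getting $C_0 < 1$ as stated requires tightening each step — the Taylor remainder, the smoothing constant, and the choice of $T$ — rather than settling for a conveniently loose estimate. Berry's original argument gave a much larger constant; Esseen's sharper analysis of the integral $\int |u|^{-1}|\phi_S(u) - e^{-u^2/2}|\, du$ is what pushes the constant below $1$. For our application, any careful implementation of the strategy above delivers the required form of the inequality, so I would simply cite the sharp version rather than redo this numerical optimization.
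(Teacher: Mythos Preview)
The paper does not prove this statement at all: it is quoted as a classical theorem, with citations to Esseen's original paper and to \cite{ProbIneq} for a modern proof, and then used as a black box in \autoref{lem:diagonal}. So there is nothing to compare your argument against; your final sentence (``simply cite the sharp version'') is exactly what the paper does.

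That said, your sketch of the standard Fourier-analytic proof is broadly right but has one internal inconsistency worth flagging. You assert the bound $|\phi_S(u)-e^{-u^2/2}|\le C_3\psi|u|^3 e^{-u^2/4}$ only on the range $|u|\le c/\psi^{1/3}$, yet you then choose $T$ of order $1/\psi$ in the smoothing inequality. Since $1/\psi$ can be much larger than $1/\psi^{1/3}$, the integral $\int_{-T}^{T}$ runs over a region where you have not established control of the integrand. The genuine argument either (i) shows that the characteristic-function estimate actually persists out to $|u|\le c/\psi$ (this uses that the \emph{sum} $\sum_i \rho_i|u|^3 = \psi|u|^3$ is what must be small relative to $u^2$, not each term individually), or (ii) chooses $T\asymp 1/\psi^{1/3}$ and accepts the weaker conclusion $O(\psi^{1/3})$, which is not what is stated here. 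If you ever need to carry this out rather than cite it, that gap between the two ranges is the step to watch.
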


Before stating the results, we need some definitions. If $(X,Y) \sim \rand$, then the marginal distribution of $X$ is $\randx$, the uniform distribution on the power set of $[n]$. With slight abuse of terminology, we call the subset $X \subset [n]$ a \emph{partition}, as it will correspond to the genuine partition $(X,X^c)$.

For a permutation $\pi \in S_n$ and a partition $X \subset [n]$, define
\[ P_1 := \sum_{i \in X} a_{i\pi(i)}, \quad P_2 := \sum_{i \in \overline{X}} a_{i\pi(i)}. \]
We say that a partition $X$ is {\em good for $\pi$} if either $P_1$ is $25\error^{1/7}$-close to zero and $P_2$ is $25\error^{1/7}$-close to $\pm 1$, or the same is true with the roles of $P_1$ and $P_2$ reversed. Otherwise, we say that $X$ is {\em bad for $\pi$}.
We say that the permutation $\pi \in S_n$ is \emph{good} if with probability at least $4/5$, a random partition $X$ is good for $\pi$. Otherwise, we say that $\pi$ is {\em bad}.

The following lemma shows that most permutations are good.

\begin{lemma} \label{lem:decomposition:perm}
 With probability at least $1-50 \error^{1/7}$, a random permutation $\pi \in S_n$ is good.
\end{lemma}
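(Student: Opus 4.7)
The plan is to exchange the order of sampling: instead of first drawing a random restriction $(X,Y) \sim \rand$ and then a permutation $\pi \in T_{X,Y}$, sample $X \subset [n]$ uniformly and $\pi \in S_n$ uniformly \emph{independently}. These two sampling procedures are equivalent: under the first, $X$ is uniform in $\pset{[n]}$ because each element is included independently with probability $1/2$, and conditional on $X$ the set $Y = \pi(X)$ is uniform over size-$|X|$ subsets while $\pi$ is uniform in $T_{X,Y}$. Thus $(X,\pi)$ has the same distribution under both. Moreover, whenever we identify $Y := \pi(X)$, the values $g_1(X,Y)(\pi)$ and $g_2(X,Y)(\pi)$ are precisely the sums $P_1 = \sum_{i\in X} a_{i\pi(i)}$ and $P_2 = \sum_{i \in \overline{X}} a_{i\pi(i)}$ appearing in the definition of \emph{good for $\pi$}.

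Next, I bound the joint probability that $X$ is good for $\pi$. By \autoref{lem:typical-restriction}, $(X,Y)$ is typical with probability at least $1 - 3\error^{1/7}$. For any typical restriction, \autoref{lem:decomposition} says that (up to swapping $g_1$ and $g_2$) the function $g_1$ is $(3\error^{1/7}, 19\error^{1/7})$-almost close to zero and $g_2$ is $(4\error^{1/7}, 24\error^{1/7})$-almost Boolean. A union bound over these two events shows that with probability at least $1 - 7\error^{1/7}$ over $\pi \in T_{X,Y}$, both $g_1$ is within $19\error^{1/7} \leq 25\error^{1/7}$ of zero and $g_2$ is within $24\error^{1/7} \leq 25\error^{1/7}$ of $\pm 1$; that is, $X$ is good for $\pi$. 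Combining the two estimates,
\[
\Pr_{X,\pi}[X \text{ is good for }\pi] \;\geq\; (1-3\error^{1/7})(1-7\error^{1/7}) \;\geq\; 1 - 10\error^{1/7}.
\]

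Finally, I apply Markov's inequality in the obvious way. Define $q(\pi) = \Pr_X[X \text{ is bad for }\pi]$. The previous estimate gives $\EE_\pi[q(\pi)] \leq 10\error^{1/7}$, so
\[
\Pr_\pi[\pi \text{ is bad}] \;=\; \Pr_\pi\!\left[q(\pi) > \tfrac{1}{5}\right] \;\leq\; 5 \cdot \EE_\pi[q(\pi)] \;\leq\; 50\error^{1/7},
\]
which is the desired bound. There is no serious obstacle here; the only delicate point is the sampling-equivalence observation in the first paragraph, which is what lets the per-restriction conclusion of \autoref{lem:decomposition} be transferred into a statement about typical permutations. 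The Berry--Esseen theorem quoted at the start of \autoref{sec:partition} is not needed for this particular lemma and will presumably be invoked later in the paper.
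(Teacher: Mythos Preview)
Your proof is correct and follows essentially the same approach as the paper: both establish the sampling equivalence between $(X,Y)\sim\rand$ followed by $\pi\in T_{X,Y}$ and independent uniform $(X,\pi)$, combine \autoref{lem:typical-restriction} with \autoref{lem:decomposition} to get the $1-10\error^{1/7}$ bound on $\Pr[X\text{ good for }\pi]$, and finish with Markov's inequality against the threshold $1/5$. Your remark that Berry--Esseen is not used here is accurate; the paper indeed reserves it for the subsequent lemma.
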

\begin{proof}
 By \autoref{lem:typical-restriction}, a restriction $(X,Y) \sim \mathcal{R}$ is typical with probability at least $1-3\error^{1/7}$. Suppose $(X,Y)$ is typical. Choose a uniform random permutation $\pi \in T_{X,Y}$. \autoref{lem:decomposition} shows that with probability at least $1-7\error^{1/7}$, $X$ is good for $\pi$. Hence, if we choose a restriction $(X,Y) \sim \rand$ and a permutation $\pi \in T_{X,Y}$ uniformly at random, then $X$ is good for $\pi$ with probability at least $1-10\error^{1/7}$.

 Given $X$, the sets $T_{X,Y}$ partition $S_n$. Therefore, the permutation $\pi$ chosen in the process above is chosen uniformly at random from $S_n$. Furthermore, by definition, the marginal distribution of $X$ is $\randx$. Therefore, if we first choose a permutation $\pi \in S_n$ uniformly at random, and then we choose $X \sim \randx$, then $X$ is good for $\pi$ with probability at least $1-10\error^{1/7}$. Thus, the average probability (over $\pi \in S_n$) that a random partition is bad is at most $10\error^{1/7}$:
 $$\underset{\pi \in S_n}{\mathbb{E}} \left[\Pr_{X \sim U(2^{[n]})}[X \textrm{ is bad for }\pi] \right] \leq 10 \error^{1/7}.$$
 Markov's inequality now implies that the probability that $\pi$ is bad is at most $50\error^{1/7}$:
 $$\Pr_{\pi \in S_n} \left[\Pr_{X \sim U(2^{[n]})}[X \textrm{ is bad for }\pi] > 1/5\right] < \frac{10\error^{1/7}}{1/5}=50 \error^{1/7}.$$
\end{proof}

The next lemma shows that if $\pi$ is a good permutation, then the generalized diagonal $a_{i\pi(i)}$ corresponding to $\pi$ has a special structure: one of its elements is `large', and the rest are `small'. This is, essentially, a consequence of the main statement of \cite{FKN}, but, for the sake of being self-contained, we give a full proof.

\begin{lemma} \label{lem:diagonal}
 Suppose $\pi \in S_n$ is a good permutation. Then for some $m \in [n]$, $|a_{m\pi(m)}|$ is $50\error^{1/7}$-close to~$\pm 1$, and for $i \neq m$, $|a_{i\pi(i)}| \leq 50\error^{1/7}$.
\end{lemma}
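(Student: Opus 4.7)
Write $b_i = a_{i\pi(i)}$ and $L = \sum_i b_i = f_1(\pi)$, so every subset $X \subseteq [n]$ determines $P_1(X) = \sum_{i \in X} b_i$ and $P_2(X^c) = L - P_1(X)$. The hypothesis that $\pi$ is good translates to the statement that, with probability at least $4/5$ over a uniform random $X \subseteq [n]$, the value $P_1(X)$ lies in the target set
\[ S := [-25\error^{1/7}, 25\error^{1/7}] \cup [L - 25\error^{1/7}, L + 25\error^{1/7}]. \]
Taking any single good $X$ and using $P_1 + P_2 = L$ forces $|L|$ to lie within $50\error^{1/7}$ of $1$.

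The key move is a pairing argument. For any index $i$, a union bound shows that at least a $3/5$ fraction of the subsets $X$ have both $X$ and $X \triangle \{i\}$ good for $\pi$. For any such $X$, $P_1(X) - P_1(X \triangle \{i\}) = \pm b_i$ is a difference of two elements of $S$, so $b_i$ lies within $50\error^{1/7}$ of $\{-L, 0, L\}$. Call $i$ \emph{large} if $b_i$ is within $50\error^{1/7}$ of $\pm L$ (so $|b_i|$ is close to $1$) and \emph{small} otherwise (so $|b_i| \leq 50\error^{1/7}$); the conclusion reduces to showing that exactly one index is large.

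To rule out two large indices $m \neq m'$, I condition on the pair $(\cond{\{m \in X\}}, \cond{\{m' \in X\}}) \in \{0,1\}^2$. This shifts $P_1$ by one of the four values $\{0, b_m, b_{m'}, b_m + b_{m'}\}$, which are, up to errors of $O(\error^{1/7})$, three distinct numbers drawn from $\{0, \pm 1, \pm 2\}$ that span at least $2$. Since $S$ is contained in an interval of length $|L| + 50\error^{1/7} < 3/2$, for any realization $R := \sum_{i \notin \{m,m'\}} \cond{\{i \in X\}} b_i$ of the remaining sum, at least one of the four shifts forces $P_1 = R + \text{shift} \notin S$. Averaging over the four cases yields bad probability $\geq 1/4 > 1/5$, a contradiction. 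To rule out zero large indices, note that then $\max_i |b_i| \leq 50\error^{1/7}$, and Chebyshev's inequality applied to $P_1$ (whose mean $L/2 \approx 1/2$ is bounded away from $S$) forces $V := \sum b_i^2 = \Omega(1)$. Then the Berry--Esseen theorem applies with error $\psi = O(\max_i |b_i|/\sqrt{V}) = O(\error^{1/7})$, approximating $P_1$ by $N(L/2, V/4)$; this normal has bounded density, so it assigns only $O(\error^{1/7})$ mass to the measure-$O(\error^{1/7})$ set $S$, again contradicting $\Pr[P_1 \in S] \geq 4/5$ when $\error$ is small.

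Hence there is a unique large index $m$, giving $|a_{m\pi(m)}|$ close to $1$ and $|a_{i\pi(i)}| \leq 50\error^{1/7}$ for all $i \neq m$. The hard part will be the case of zero large indices, where one must combine Chebyshev (to bootstrap $V$ into a regime where Berry--Esseen is informative) with a density bound on the normal approximation; the case of two or more large indices is handled by a short pigeonhole argument on the four possible shifts.
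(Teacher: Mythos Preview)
Your proof is correct and follows essentially the same plan as the paper's: a pairing argument to classify each $b_i$ as small or large, Berry--Esseen (bootstrapped by a variance lower bound) to rule out the all-small case, and a four-way shift argument to rule out two large indices. Your endgame is in fact slightly cleaner in two places --- you bound $\Pr[N\in S]$ directly via the $O(1)$ density of the normal rather than the paper's three-interval bitonic comparison, and your ``at most one large'' step uses an averaged pigeonhole on the four shifts rather than the paper's existence of a single $Y$ with all four of $Y,Y\cup\{m\},Y\cup\{l\},Y\cup\{m,l\}$ good --- but these are cosmetic variations on the same argument.
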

\begin{proof}
The proof is inspired by one of the proofs in~\cite{FKN}. Considering what happens when an element `switches sides' allows us to group the elements $a_{i\pi(i)}$ into two groups: `small' elements (close to zero) and `large' elements (close to $\pm 1$). Similar considerations show that there can be at most one large element. The crucial part is showing that not all elements can be small. Indeed, in this case, the sum $P_1$ in the definition of goodness is approximately normal, and so it cannot be concentrated on the two values $\{0,1\}$ or $\{0,-1\}$. The formal proof is as follows.

\medskip

 Define $s_i = a_{i\pi(i)}$. Since $\pi$ is good, $S_0 := \sum_{i=1}^n s_i$ is $50\error^{1/7}$-close to either~$1$ or~$-1$. It cannot be close to both, since $\error$ is small enough. Choose $K \in \{ \pm 1\}$ so that $S_0$ is $50\error^{1/7}$-close to~$K$.

 Define $T(X) = \sum_{i \in X} s_i$. Let $X \sim \randx$, and put $T = T(X)$, so that $T$ is also a random variable. Note that $T = S_0/2 + \sum_{i=1}^n W_i$, where $W_i = s_i (\charf{i \in X} - 1/2)$. Clearly, $X$ is a good partition for $\pi$ if and only if $\overline{X}$ is a good partition for $\pi$. Since $X$ and $\overline{X}$ are equidistributed, given that $X$ is good for $\pi$, $T$ is $25\error^{1/7}$-close to zero with probability $1/2$, and $25\error^{1/7}$-close to $K$ with probability $1/2$. We conclude that with probability at least $2/5$, $T$ is $25\error^{1/7}$-close to zero, and with probability at least $2/5$, $T$ is $25\error^{1/7}$-close to~$K$.

 Consider any $s_i$. Since $2 \cdot 1/5 < 1$ (here, $1/5$ is an upper bound on the probability that a random partition is bad for $\pi$), there is some choice of $Y \subset [n]\setminus \{i\}$ such that both $Y$ and $Y \cup \{ i \}$ are good for $\pi$. Since $|T(Y) - T(Y\cup\{i\})| = |s_i|$, necessarily either $|s_i| \leq 50\error^{1/7}$ ($s_i$ is \emph{small}) or $|s_i|$ is $50\error^{1/7}$-close to $|K| = 1$ ($s_i$ is \emph{large}).

 We claim that not all the $s_i$ can be small. Assume, for the sake of contradiction, that $|s_i| \leq 50\error^{1/7}$ for all~$i$. Applying Berry-Esseen with $X_i=W_i$ shows that $T$ is $\psi$-close in distribution to a normal distribution $N \sim \normal(S_0/2,\sigma^2)$, where
\[
\sigma^2 = \frac{1}{4} \sum_{i=1}^n s_i^2, \quad \psi = \frac{\sum_{i=1}^n |s_i|^3}{\left(\sum_{i=1}^n s_i^2\right)^{3/2}}. 
\]
 The upper bound on $|s_i|$ implies that $|s_i|^3 \leq 50\error^{1/7} s_i^2$, and so
 \[
  \psi \leq \frac{50\error^{1/7} \sum_{i=1}^n s_i^2}{\left(\sum_{i=1}^n s_i^2\right)^{3/2}} =
  \frac{50\error^{1/7}}{\sqrt{\sum_{i=1}^n s_i^2}} = \frac{25\error^{1/7}}{\sigma}.
 \]
 We now obtain a lower bound on $\sigma$. With probability at least $4/5$, $T$ is $25\error^{1/7}$-close to zero or to $K$, and so its distance from its mean $S_0/2$ is at least $1/2 - 50\error^{1/7}$. Hence
 \[
  \sigma^2 = \VV[T] \geq \frac{4}{5} \left(\frac{1}{2} - 50\error^{1/7}\right)^2 = \Omega(1).
 \]
 This shows that $\psi = O(1)$. Concretely, when $\error^{1/7}$ is small enough, $\psi \leq 1/10$.

 For every interval $I$, Berry-Esseen shows that $|\Pr[T \in I] - \Pr[N \in I]| < 2\psi$. We consider three intervals: $I_1 = \upto{0}{25\error^{1/7}}$, $I_2 = \upto{K}{25\error^{1/7}}$, and $I_3$ is the interval `in between': when $K = 1$, $I_3 = (25\error^{1/7},1-25\error^{1/7})$, and when $K = -1$, $I_3 = (-1+25\error^{1/7},-25\error^{1/7})$. By assumption,
 \begin{align*}
 \Pr[T \in I_1] &\geq 2/5, & \Pr[T \in I_2] &\geq 2/5, & \Pr[T \in I_3] &\leq 1/5. \\
 \intertext{Since $2\psi \leq 1/5$, we deduce that}
 \Pr[N \in I_1] &\geq 1/5, & \Pr[N \in I_2] &\geq 1/5, & \Pr[N \in I_3] &\leq 2/5.
 \end{align*}

 The density of a normal distribution is bitonic (increasing and then decreasing), and so
 \begin{equation} \label{eq:bitonic}
  \frac{\Pr[N \in I_3]}{|I_3|} \geq \min\left(\frac{\Pr[N \in I_1]}{|I_1|}, \frac{\Pr[N \in I_2]}{|I_2|}\right).
 \end{equation}
 We have $|I_1| = |I_2| = 50\error^{1/7}$ and $|I_3| = 1-50\error^{1/7}$. Therefore, the left-hand side of \ref{eq:bitonic} is at most (say) $4/5$, and both terms on the right-hand side are at least (say) $1$ (since $\error$ is small enough), a contradiction.

 Concluding, there must be some $m$ such that $|s_m|$ is $25\error^{1/7}$-close to~$1$. We claim that there cannot be two such indices $m,l$. Suppose, for the sake of contradiction, that both $|s_m|$ and $|s_l|$ are $25\error^{1/7}$-close to~$1$. Since $4\cdot 1/5 < 1$, there is some choice of $Y \subset [n] \setminus \{m,l\}$ such that all of $Y, Y\cup \{m\}, Y\cup \{l\}, Y \cup \{m,l\}$ are good for $\pi$. Since $T(Y \cup \{m\}) = T(Y) + s_m$ and $T(Y \cup \{l\}) = T(Y) + s_l$, we see that $s_m,s_l$ must have the same sign. Since $T(Y \cup \{m,l\}) = T(Y) + s_m + s_l$, this implies that $Y$ and $Y \cup \{m,l\}$ cannot both be good. This contradiction shows that there can be at most one large $s_m$.
\end{proof}

\subsection{Strong lines} \label{sec:strong}
The previous section showed that if we pick a generalized diagonal at random in the matrix $(a_{ij})$, then with probability close to 1, we can designate exactly one element in it as `large'. Corollary \ref{cor:diagonal} restates this formally.

In this subsection, we say that an entry $a_{ij}$ is \emph{large} if $|a_{ij}|$ is $50\error^{1/7}$ close to~$1$. Otherwise, we say it is \emph{small}. Note that, contrary to the usage in \autoref{lem:diagonal}, small elements need not be close to~$0$.
While \autoref{lem:diagonal} allows us to deduce that most of the non-large elements in $(a_{ij})$ are actually close to~$0$, for what follows, it will be enough for us to just maintain a distinction between large elements and non-large elements.

Let $(X,Y)$ be a restriction. Denote by $A[X,Y]$ the submatrix $(a_{ij})_{i \in X, j \in Y}$. We say that a generalized diagonal in $A[X,Y]$ is \emph{good} if it contains exactly one large entry. We say that $(X,Y)$ is \emph{$q$-good} if with probability at least $1 - q$, a random generalized diagonal in $A[X,Y]$ is good.

\begin{corollary} \label{cor:diagonal}
 The restriction $([n],[n])$ is $50\error^{1/7}$-good.
\end{corollary}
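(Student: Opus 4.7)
The plan is to observe that the corollary is essentially the combination of the two main results of the previous subsection, Lemmas \ref{lem:decomposition:perm} and \ref{lem:diagonal}, once we align the terminology. A generalized diagonal in $A[[n],[n]]$ is by definition a set $\{a_{i\pi(i)} : i \in [n]\}$ for some permutation $\pi \in S_n$, and choosing a uniformly random such diagonal is the same as choosing $\pi$ uniformly at random from $S_n$. So the task reduces to showing that for all but a $50\error^{1/7}$-fraction of permutations $\pi$, the diagonal $\{a_{i\pi(i)}\}$ has exactly one large entry.

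First I would recall that by \autoref{lem:decomposition:perm}, a uniformly random $\pi \in S_n$ is good with probability at least $1 - 50\error^{1/7}$. Next, I would invoke \autoref{lem:diagonal}: whenever $\pi$ is good, there exists some $m \in [n]$ for which $|a_{m\pi(m)}|$ is $50\error^{1/7}$-close to~$1$ (so the entry $a_{m\pi(m)}$ is large by definition), while for all $i \neq m$ we have $|a_{i\pi(i)}| \leq 50\error^{1/7}$.

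The only thing left to check is that the non-$m$ entries are indeed \emph{small} in the sense of this subsection, that is, that $|a_{i\pi(i)}|$ is not $50\error^{1/7}$-close to~$1$. Since $\error$ is small enough (so that $50\error^{1/7} < 1/2$, say), the bound $|a_{i\pi(i)}| \leq 50\error^{1/7}$ yields $\bigl||a_{i\pi(i)}|-1\bigr| \geq 1 - 50\error^{1/7} > 50\error^{1/7}$, confirming that these entries are small. Hence for every good $\pi$, the diagonal corresponding to $\pi$ contains exactly one large entry, so it is good in the diagonal sense. Combining with the fraction of good permutations from \autoref{lem:decomposition:perm} gives that $([n],[n])$ is $50\error^{1/7}$-good, as claimed. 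There is no real obstacle: the corollary is really just a change of viewpoint, translating the per-permutation statement of \autoref{lem:diagonal} into a statement about the distribution of diagonals in the full matrix $(a_{ij})$, which is what \autoref{sec:strong} will need as its starting hypothesis.
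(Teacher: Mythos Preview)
Your argument is correct and matches the paper's own proof, which simply says the corollary is immediate from \autoref{lem:decomposition:perm} and \autoref{lem:diagonal}. The extra verification you include (that the entries with $|a_{i\pi(i)}| \leq 50\error^{1/7}$ are not large, using $\error$ small enough) is a reasonable detail the paper leaves implicit.
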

\begin{proof}
 Immediate from \autoref{lem:decomposition:perm} and \autoref{lem:diagonal}.
\end{proof}

Our goal is to deduce that $([n],[n])$ has a row or column which contains $(1-O(\error^{1/7}))n$ large entries. The general plan of attack is to prove this by induction on~$n$. We will have a separate argument for small values of $n$, and an inductive argument for large $n$. The latter will use the following lemma, which we will apply with $|X'| = \lfloor |X|/2 \rfloor$.
\begin{lemma} \label{lem:good-half}
 Suppose that $(X,Y)$ is $q$-good for $q < 1/2$. For every $X' \subset X$ there exists $Y' \subset Y$ with $|Y'|=|X'|$, such that either $(X',Y')$ or $(X \setminus X',Y \setminus Y')$ is $q$-good. Similarly, for every $Y' \subset Y$ there exists $X' \subset X$ with $|X'|=|Y'|$, such that either $(X',Y')$ or $(X \setminus X',Y \setminus Y')$ is $q$-good.
\end{lemma}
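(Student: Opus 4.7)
The plan is to pick a uniform random bijection $\pi\colon X \to Y$, condition on the random set $Y^\ast = \pi(X')$, and exploit the fact that conditioned on $Y^\ast = Y'$ the two halves $\pi|_{X'}$ and $\pi|_{X \setminus X'}$ are independent uniform random bijections onto $Y'$ and $Y \setminus Y'$. Fix $X' \subseteq X$, write $X'' = X \setminus X'$, and for each candidate $Y' \subseteq Y$ with $|Y'| = |X'|$, write $Y'' = Y \setminus Y'$ and let $\pi_1, \pi_2$ denote independent uniform random bijections $X' \to Y'$ and $X'' \to Y''$. Define
\[
 p_i(Y') = \Pr[\pi_i \text{ has exactly one large entry}], \qquad q_i(Y') = \Pr[\pi_i \text{ has no large entries}].
\]
Then $(X', Y')$ is $q$-good precisely when $p_1(Y') \geq 1-q$, and analogously for $(X'', Y'')$ and $p_2$; the task reduces to finding a single $Y'$ for which $\max(p_1(Y'), p_2(Y')) \geq 1-q$.

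The key observation is that a generalised diagonal $\pi$ of $A[X,Y]$ has exactly one large entry if and only if exactly one of $\pi_1, \pi_2$ contributes a large entry and the other contributes none. Since the ``exactly one large'' and ``no large'' events are disjoint, $q_i(Y') \leq 1 - p_i(Y')$; conditioning on $Y^\ast = Y'$ and using independence of $\pi_1$ and $\pi_2$,
\[
 \Pr[\pi \text{ good} \mid Y^\ast = Y'] = p_1(Y')q_2(Y') + q_1(Y')p_2(Y') \leq f(p_1(Y'), p_2(Y')),
\]
where $f(a,b) := a + b - 2ab$. On $[0, 1-q]^2$ with $q < 1/2$, the unique interior critical point of $f$ is $(1/2, 1/2)$ with value $1/2 < 1-q$, while on the edge $a = 1-q$ one has $f(1-q, b) = (1-q) + (2q-1)b$, a linear function with negative slope, and the three other edges are symmetric; hence $f \leq 1-q$ on the whole square, with equality only at the corners $(1-q, 0)$ and $(0, 1-q)$.

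Now suppose for contradiction that for every admissible $Y'$ one has $p_1(Y') < 1-q$ and $p_2(Y') < 1-q$. The calculus above then gives $\Pr[\pi \text{ good} \mid Y^\ast = Y'] < 1-q$ strictly for every $Y'$, and since there are only finitely many admissible $Y'$, averaging yields $\Pr[\pi \text{ good}] < 1-q$, contradicting the $q$-goodness of $(X, Y)$. This establishes the first half of the lemma; the second half (varying $X'$ for a given $Y'$) follows by applying the first half to the transposed matrix $A[X,Y]^{T}$. The only step requiring any care is the calculus on $f$, and it is precisely the hypothesis $q < 1/2$ that pushes the maximum of $f$ off the interior critical point onto the two corners — without this, the strict inequality we need in the contradiction step would fail.
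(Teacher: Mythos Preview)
Your proof is correct and follows essentially the same approach as the paper: both reduce to showing that $p_1(1-p_2)+(1-p_1)p_2 \geq 1-q$ forces $\max(p_1,p_2)\geq 1-q$, after first bounding $\Pr[\pi\text{ good}\mid Y^\ast=Y']\leq p_1(1-p_2)+(1-p_1)p_2$ via $q_i\leq 1-p_i$. The only differences are cosmetic: the paper picks a single $Y'$ with conditional probability $\geq 1-q$ directly (by averaging) rather than arguing by contradiction over all $Y'$, and it dispatches the inequality with the substitution $p_i=(1+\delta_i)/2$, which turns $p_1(1-p_2)+(1-p_1)p_2$ into $(1-\delta_1\delta_2)/2$ and makes the conclusion a one-liner, whereas you do a boundary analysis of $f(a,b)=a+b-2ab$ on $[0,1-q]^2$.
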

\begin{proof}
 By symmetry, we need only prove the first statement. Fix $X' \subset X$. Since the sets $(T_{X',Y'}:\ Y' \subset Y,\ |Y'|=|X'|)$ partition $T_{X,Y}$, the fact that $(X,Y)$ is $q$-good implies that for some choice of $Y'$, the probability that a random generalized diagonal corresponding to a permutation in $T_{X',Y'}$ is good is at least $1 - q$. Choose such a $Y'$.

 Let $p_1$ be the probability that a random generalized diagonal in $A[X',Y']$ is good, and let $p_2$ be the probability that a random generalized diagonal in $A[X \setminus X',Y \setminus Y']$ is good. Then $p_1 (1 - p_2) + (1 - p_1) p_2 \geq 1 - q$. We claim that this forces $\max(p_1,p_2) \geq 1 - q$.

 Indeed, let $p_1 = (1+\delta_1)/2$ and $p_2 = (1+\delta_2)/2$, where $|\delta_1|,|\delta_2| \leq 1$. We have
 \[  p_1(1-p_2) + (1-p_1)p_2 = \frac{1-\delta_1\delta_2}{2}. \]
 Since $1 - q > 1/2$, we must have $\delta_1 \delta_2 < 0$. Without loss of generality, we may assume that $\delta_1 > 0$. Since $1 \geq -\delta_2$, we have
 \[ p_1 = \frac{1+\delta_1}{2} \geq \frac{1-\delta_1\delta_2}{2} \geq 1-q,\]
 proving the lemma.
\end{proof}

Let $(X,Y)$ be a restriction. If $i \in X$, we say that row~$i$ is {\em $p$-strong for $(X,Y)$} if at least $(1-p)|Y|$ of the entries $\{a_{ij} : j \in Y\}$ are large. If $j \in Y$, we say that column~$j$ is {\em $p$-strong for $(X,Y)$} if at least $(1-p)|X|$ of the entries $\{a_{ij} : i \in X\}$ are large.

We say that $(X,Y)$ has a \emph{$p$-strong row (resp. column)} if some row (resp. column) is $p$-strong for $(X,Y)$. We say that $(X,Y)$ has a \emph{$p$-strong line} if it has either a $p$-strong row or a $p$-strong column.

Our goal is to show that $([n],[n])$ has a strong line. We start by showing that two strong lines must coincide.
\begin{lemma} \label{lem:two-strong}
 Suppose that $(X,Y)$ is $q$-good. Let $X_1,X_2 \subset X$, and let $Y_1,Y_2 \subset Y$ with $|Y_1|=|X_1|$ and $|Y_2|=|X_2|$. Suppose that $(X_1,Y_1)$ has a $p_1$-strong line, and that $(X_2,Y_2)$ has a $p_2$-strong line. If
 $(1-p_1)|X_1| > 1$, $(1-p_2)|X_2| > 1$ and
 \[
  (1-p_1)(1-p_2) \frac{|X_1| |X_2|}{|X|^2} \geq 4q
 \]
 then the strong lines must be the same (defined by the same row or by the same column).
\end{lemma}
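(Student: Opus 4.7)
The plan is to argue by contradiction: suppose the two strong lines are distinct. I will lower-bound the probability that a uniformly random $\pi \in T_{X,Y}$ produces a generalized diagonal in $A[X,Y]$ containing at least two large entries, and show this probability exceeds $q$. Since a good diagonal has exactly one large entry, this contradicts the hypothesis that $(X,Y)$ is $q$-good.

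There are three cases, according as the two strong lines are both rows, both columns, or one row and one column (the mixed case being automatically ``distinct'' since a row and a column are never the same line). In each case I would let $L_1, L_2$ denote the sets of large entries along the two strong lines, as subsets of $Y_i$ or $X_i$ as appropriate; the hypothesis $(1-p_i)|X_i|>1$ combined with integrality of $|L_i|$ forces $|L_i|\geq 2$.

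For the both-rows case, with strong rows $i_1^* \in X_1$ and $i_2^* \in X_2$ satisfying $i_1^* \neq i_2^*$, the event $\pi(i_1^*)\in L_1$ and $\pi(i_2^*)\in L_2$ already witnesses two distinct large entries on the diagonal. A direct count gives
\[
\Pr[\pi(i_1^*)\in L_1,\ \pi(i_2^*)\in L_2] = \frac{|L_1||L_2|-|L_1\cap L_2|}{|Y|(|Y|-1)};
\]
using $|L_1\cap L_2|\leq \min(|L_1|,|L_2|)$ and $\min(|L_1|,|L_2|)\geq 2$ this is at least $|L_1||L_2|/(2|Y|(|Y|-1))$, which by the hypothesis strictly exceeds $q$. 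The both-columns case is handled symmetrically by transposing.

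The mixed case is the most delicate, and is where the factor of $4$ in the hypothesis is actually needed. Let $i_1^*$ be the strong row and $j_2^*$ the strong column, with $L_1 \subseteq Y_1$ the columns for which $a_{i_1^*,j}$ is large and $L_2' \subseteq X_2$ the rows for which $a_{i,j_2^*}$ is large. I would count permutations $\pi \in T_{X,Y}$ with $\pi(i_1^*) \in L_1 \setminus \{j_2^*\}$ and $\pi(i^*) = j_2^*$ for some $i^* \in L_2' \setminus \{i_1^*\}$; the two exclusions remove possible ``collisions'' at $(i_1^*, j_2^*)$ where the two large entries would coincide, and for the remaining choices the two entries $(i_1^*,\pi(i_1^*))$ and $(i^*, j_2^*)$ are automatically distinct. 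This yields probability at least $(|L_1|-1)(|L_2'|-1)/(|Y|(|Y|-1))$; invoking $|L_i|\geq 2$ so that $|L_i|-1\geq|L_i|/2$, the bound becomes $(1-p_1)(1-p_2)|X_1||X_2|/(4|X|(|X|-1))$, which strictly exceeds $q$ exactly by the hypothesis. The main technical burden is this bookkeeping of collisions in the mixed case; the other two cases reduce to a routine two-entry count.
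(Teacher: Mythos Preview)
Your proof is correct and follows essentially the same approach as the paper's. The paper unifies your three cases by defining two entries to \emph{conflict} if they share a row or column, observing that in all cases the number of non-conflicting pairs from $L_1 \times L_2$ is at least $(t_1-1)(t_2-1)$, and then applying the same disjoint-events probability count and the bound $t_i - 1 \geq t_i/2$ (valid since $t_i \geq 2$) to reach the identical contradiction. Your explicit case split is just a slightly less compact version of the same argument.
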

\begin{proof}
 Suppose, for the sake of contradiction, that the two restrictions have different strong lines. Let $L_1 \subset X_1 \times Y_1$ consist of the first $t_1 = \lceil (1-p_1)|X_1| \rceil$ indices of large elements in the strong line of $(X_1,Y_1)$, and let $L_2$ consist of the first $t_2 = \lceil (1-p_2)|X_2| \rceil$ indices of large elements in the strong line of $(X_2,Y_2)$.

 Say that $(i_1,j_1) \in L_1$ and $(i_2,j_2) \in L_2$ \emph{conflict} if either $i_1 = i_2$ or $j_1 = j_2$ (or both). If $L_1$ is row~$i$ and $L_2$ is column~$j$, then an entry on $L_1$ not on column~$j$ never conflicts with an entry on $L_2$ not on row~$i$. Therefore, there are at least $(t_1-1)(t_2-1)$ non-conflicting pairs. If both $L_1$ and $L_2$ are rows (resp. columns), then two entries conflict only if they are on the same column (resp. row). Therefore, the number of non-conflicting pairs is at least $t_1t_2 - \min(t_1,t_2) \geq (t_1-1)(t_2-1)$. 

 For each non-conflicting pair, the probability that a random generalized diagonal in $A[X,Y]$ goes through both entries of the pair is $1/|X|(|X|-1)$. Since these events are all disjoint, it follows that
 \[
  \frac{(t_1-1)(t_2-1)}{|X|^2} < \frac{(t_1-1)(t_2-1)}{|X|(|X|-1)} \leq q.
 \]
 Since $t_1,t_2 \geq 2$, $t_1 - 1 \geq t_1/2$ and $t_2 - 1 \geq t_2/2$. Using $t_1/|X_1| \geq 1-p_1$ and $t_2/|X_2| \geq 1-p_2$, we deduce that
 \[
  (1-p_1)(1-p_2)\frac{|X_1| |X_2|}{|X|^2} < 4q,  
 \]
 contradicting our assumption.
\end{proof}
Note that the conditions $(1-p_1)|X_1| > 1$ and $(1-p_2)|X_2| > 1$ simply guarantee that each strong line has at least two large elements. (If one of the strong lines had only one large element, then it could be contained in the other strong line, and so there would be no contradiction.) 

Our next result says that if there is one strong line, then there cannot be many large entries outside the line. For the proof, we need the simplest case of Bonferroni's inequality.
\begin{classic}[Bonferroni]
 Let $A_1,\ldots,A_h$ be events. Then
 \[ \Pr[A_1 \lor \cdots \lor A_h] \geq \sum_i \Pr[A_i] - \sum_{i < j} \Pr[A_i \land A_j]. \]
\end{classic}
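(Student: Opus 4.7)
The plan is to reduce the inequality to a pointwise statement on the sample space. For each outcome $\omega$, let $N(\omega) = |\{i : \omega \in A_i\}|$ count how many of the events contain $\omega$. Then, reading each side of the inequality as an expectation, we have $\Pr[A_1 \lor \cdots \lor A_h] = \mathbb{E}[\mathbf{1}\{N \geq 1\}]$, $\sum_i \Pr[A_i] = \mathbb{E}[N]$, and $\sum_{i<j} \Pr[A_i \land A_j] = \mathbb{E}\bigl[\binom{N}{2}\bigr]$, the last identity holding because, fixing $\omega$, the indicator $\mathbf{1}\{\omega \in A_i \cap A_j\}$ summed over pairs $i<j$ is exactly the number of such pairs of events containing $\omega$, namely $\binom{N(\omega)}{2}$.

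By linearity of expectation, it then suffices to prove the pointwise inequality
\[
\mathbf{1}\{N \geq 1\} \;\geq\; N - \binom{N}{2}
\]
for every non-negative integer $N$. For $N=0$ both sides vanish; for $N=1$ both sides equal $1$; for $N=2$ the right-hand side equals $2-1=1$, matching the left; and for $N \geq 3$ one has $N - \binom{N}{2} = \tfrac{N(3-N)}{2} \leq 0$, so the inequality is automatic. Multiplying by the probability mass at $\omega$ and summing yields the claim.

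I do not anticipate a real obstacle, since the integer inequality above has only three nontrivial cases. As a sanity check / alternative route, one could also run induction on $h$: write $B = A_1 \cup \cdots \cup A_{h-1}$, apply inclusion-exclusion to get $\Pr[B \cup A_h] = \Pr[B] + \Pr[A_h] - \Pr[B \cap A_h]$, invoke the induction hypothesis on $\Pr[B]$, and bound $\Pr[B \cap A_h] = \Pr\bigl[\bigcup_{i<h}(A_i \cap A_h)\bigr] \leq \sum_{i<h}\Pr[A_i \cap A_h]$ by the union bound. Either proof is a few lines; the pointwise version is the cleanest and is the one I would write up.
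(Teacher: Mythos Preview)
Your proof is correct. Note, however, that the paper does not actually prove this statement: it is quoted as a classical result (Bonferroni's inequality) and invoked without argument, as a tool for the subsequent lemma. So there is no ``paper's own proof'' to compare against. Your pointwise-indicator argument is the standard one and would be entirely acceptable if a proof were required; the inductive alternative you sketch is also fine.
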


\begin{lemma} \label{lem:strong-scattered}
 Suppose that $(X,Y)$ is $q$-good and has a $p$-strong line. Let $m = |X|$, and let $\varrho = 2q/(1-p)$. If $m \geq 6$, $(1-p)m > 1$, $2\varrho m > 1$ and $\varrho \leq 1/2$, then that line is actually $(q + 3\varrho)$-strong.
\end{lemma}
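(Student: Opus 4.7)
I assume without loss of generality that the $p$-strong line is row $i^*$ of $A[X,Y]$, where $m=|X|$. Let $S\subseteq Y$ denote the set of columns $j$ for which $a_{i^*,j}$ is large, so $s=|S|\geq(1-p)m$, and set $W=Y\setminus S$ and $T=\{(i,j):i\neq i^*\text{ and }a_{i,j}\text{ is large}\}$. The goal is to show $|W|\leq(q+3\varrho)m$. My plan is (i) to derive a main inequality relating $|W|$ and $|T|$ from a two-way count of good diagonals whose row-$i^*$ entry lies in $W$, and (ii) to bound $|T|$ from above via the simplest case of Bonferroni applied to non-conflicting pairs of large entries with one in $S$ and one in $T$, whose joint occurrence on a diagonal forces the diagonal to carry at least two large entries.

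\textbf{The main counting inequality.} Let $D$ be a uniform generalized diagonal of $A[X,Y]$, identified with the corresponding permutation $\pi$. Since $\Pr[D\text{ is good}]\geq 1-q$ and $\Pr[\pi(i^*)\in S]=s/m$, the two-event inclusion-exclusion bound gives
\[
\Pr[D\text{ is good},\,\pi(i^*)\in W]\geq(1-q)-s/m=(|W|-qm)/m.
\]
Conversely, any such good diagonal has its unique large entry at some $(i,j)\in T$ with $j\neq\pi(i^*)$, and the number of diagonals through any two non-conflicting entries is $(m-2)!$, so summing over valid triples $(k,(i,j))$ yields
\[
\Pr[D\text{ is good},\,\pi(i^*)\in W]\leq\frac{|W||T|}{m(m-1)}.
\]
Combining these gives the main inequality
\[
|W|\Bigl(1-\frac{|T|}{m-1}\Bigr)\leq qm.
\]

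\textbf{Bounding $|T|$ via Bonferroni.} For each non-conflicting pair $(e_1,e_2)$ with $e_1\in S$ and $e_2\in T$, let $B_{e_1,e_2}$ be the event that $D$ contains both entries. Each such event forces $D$ to carry at least two large entries, so $D$ is not good, and therefore $\Pr[\bigcup B_{e_1,e_2}]\leq q$. The two-term Bonferroni inequality then yields
\[
q\geq\sum_{(e_1,e_2)}\Pr[B_{e_1,e_2}]\;-\;\sum_{\{(e_1,e_2),(e_1,e_2')\}}\Pr[B_{e_1,e_2}\cap B_{e_1,e_2'}],
\]
where the correction sum runs over unordered pairs of $S$-$T$-pairs that share the $S$-entry (pairs with distinct $S$-entries contribute $0$, since $D$ meets $S$ in at most one point). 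Inserting $\Pr[B_{e_1,e_2}]=1/(m(m-1))$ and $\Pr[B_{e_1,e_2}\cap B_{e_1,e_2'}]=1/(m(m-1)(m-2))$, together with the lower bound $P_{ST}\geq(s-1)|T|$ and the lower bound $s-1\geq(1-p)m-1$ coming from the hypothesis $(1-p)m>1$, I get a quadratic-in-$|T|$ inequality; solving it and using $m\geq 6$ and $\varrho\leq 1/2$ to absorb the quadratic correction term gives a bound of the shape $|T|/(m-1)\leq C\varrho$ for an explicit small absolute constant~$C$.

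\textbf{Conclusion and principal obstacle.} Substituting the bound $|T|/(m-1)\leq C\varrho$ back into the main inequality $|W|(1-|T|/(m-1))\leq qm$ gives $|W|\leq qm/(1-C\varrho)$, and a direct algebraic manipulation, using $\varrho=2q/(1-p)$ and $\varrho\leq 1/2$, upgrades this to the desired $|W|\leq(q+3\varrho)m$. The main obstacle will be the bookkeeping in the Bonferroni step: the quadratic correction in $|T|$ has to be absorbed cleanly so that the effective constant $C$ matches the factor~$3$ in front of $\varrho$, and it is precisely here that the full complement of hypotheses $m\geq 6$, $(1-p)m>1$, $2\varrho m>1$ and $\varrho\leq 1/2$ must be invoked (the last two in particular exclude the degenerate regimes where the strong line carries only a handful of large entries).
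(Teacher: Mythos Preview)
Your main counting inequality is correct and elegant, but the Bonferroni step for bounding $|T|$ has a genuine gap. The inequality you obtain,
\[
q \;\geq\; \frac{(s-1)\,|T|}{m(m-1)} \;-\; \frac{s\,\binom{|T|}{2}}{m(m-1)(m-2)},
\]
is a downward parabola in $|T|$; it constrains $|T|$ to lie \emph{outside} an interval, giving two branches. The small branch does yield $|T|/(m-1)\leq C\varrho$ with a workable constant, but the large branch (roughly $|T|\gtrsim 2(m-2)$) is not excluded by anything you have written, and nothing among the hypotheses $m\geq 6$, $(1-p)m>1$, $2\varrho m>1$, $\varrho\leq 1/2$ rules it out by itself. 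When $|T|$ is of order $m$ or larger the quadratic correction swamps the linear term and your Bonferroni lower bound becomes nonpositive, hence vacuous.

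The paper's proof avoids this difficulty by an extra structural ingredient you omit: it first invokes \autoref{lem:two-strong} (and it is precisely here that the hypotheses $(1-p)m>1$ and $2\varrho m>1$ are used, as the conditions $(1-p_1)|X_1|>1$ and $(1-p_2)|X_2|>1$ for the two competing lines) to deduce that no column can contain more than $2\varrho m$ large entries. Then, arguing by contradiction that row $i^*$ is not $(q+3\varrho)$-strong, one gets at least $3\varrho m$ large entries off the row, hence at least $\varrho m$ off the row and off any fixed column $j$. Bonferroni is applied to a \emph{fixed set of size about $\varrho m$} of such entries (conditioned on $\pi(i^*)=j$), so the quadratic correction is automatically of order $(\varrho m)^2/((m-1)(m-2))$ and is absorbed using $\varrho\leq 1/2$ and $m\geq 6$. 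There is no global bound on $|T|$ at all; the per-column bound from \autoref{lem:two-strong} is what makes the Bonferroni tame. Your sketch can be repaired by inserting this step, but as written the ``solving the quadratic'' move does not go through.
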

\begin{proof}
 Without loss of generality, we may assume that the $p$-strong line is row~$i$. Since $(X,Y)$ is $q$-good, a random element in $A[X,Y]$ is large with probability at least $(1-q)/m$. Therefore, $A[X,Y]$ contains at least $(1-q)m$ large entries.

 Suppose that row~$i$ is not $(q+3\varrho)$-strong. Then $A[X,Y]$ contains at least $3\varrho m$ large entries outside row~$i$. \autoref{lem:two-strong} implies that no other line can be $(1-2\varrho)$-strong. Therefore, no column can contain more than $2\varrho m$ large entries. So for any column~$j$, $A[X,Y]$ contains at least $\varrho m$ large entries outside row~$i$ and column~$j$. The probability that a random generalized diagonal in $A[X,Y]$ hits any single one of these, given that it hits a specific large entry in row~$i$, is $1/(m-1)$, and the probability that it hits any two of them is at most $1/(m-1)(m-2)$. Therefore, Bonferroni's inequality implies that the probability that a generalized diagonal in $A[X,Y]$ contains at least two large elements is at least
 \[
  (1-p) \left( \frac{\varrho m}{m-1} - \frac{(\varrho m + 1)(\varrho m)}{2(m-1)(m-2)} \right) =
  \frac{(1-p)\varrho m}{m-1} \left(1 - \frac{\varrho m + 1}{2(m-2)}\right) > q,
 \]
 since $\varrho m + 1 \leq m/2 + 1 \leq m - 2$. But this probability must be at most $q$, a contradiction.
\end{proof}

The following sequence of lemmas shows the existence of a strong line in $(X,Y)$, given that $(X,Y)$ is $q$-good for $q$ sufficiently small depending on $|X|$.
\begin{lemma} \label{lem:strong:0}
 Suppose that $(X,Y)$ is $q$-good for some $q < \tfrac{1}{m(m-1)}$, where $m = |X|$. Then $(X,Y)$ has a $0$-strong line.
\end{lemma}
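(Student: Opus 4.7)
The plan is to argue by contradiction. Assume $(X,Y)$ is $q$-good with $q<\tfrac{1}{m(m-1)}$ but has no $0$-strong line. Writing $B$ for the number of bad generalized diagonals in $A[X,Y]$, the $q$-goodness hypothesis gives $B\le q\cdot m!<(m-2)!$, so the strategy is to exhibit $(m-2)!$ bad diagonals and derive a contradiction.

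First I would observe that $A[X,Y]$ must contain at least one large entry $(i^*,j^*)$: otherwise every generalized diagonal has zero large entries and is bad, forcing $B=m!$. Since no row is $0$-strong, row $i^*$ contains a small entry $(i^*,j_1)$ with $j_1\ne j^*$; since no column is $0$-strong, column $j^*$ contains a small entry $(i_2,j^*)$ with $i_2\ne i^*$. Setting $i_1:=i^*$ and $j_2:=j^*$, the entries $(i_1,j_1)$ and $(i_2,j_2)$ are both small and lie in distinct rows and distinct columns, while the opposite corner $(i_1,j_2)=(i^*,j^*)$ is large. Let $\alpha$ denote the number of large entries among the pair $\{a_{i_1 j_2},a_{i_2 j_1}\}$; by construction $\alpha\ge 1$.

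Next, for each of the $(m-2)!$ bijections $\tau\colon X\setminus\{i_1,i_2\}\to Y\setminus\{j_1,j_2\}$ I would form two extensions $\sigma_0^\tau$ and $\sigma_1^\tau$ of $\tau$ to bijections $X\to Y$: the first sending $i_1\mapsto j_1,\ i_2\mapsto j_2$, the second sending $i_1\mapsto j_2,\ i_2\mapsto j_1$. Letting $h(\tau)$ be the number of large entries among $\{a_{i\tau(i)}:i\notin\{i_1,i_2\}\}$, the diagonal of $\sigma_0^\tau$ contains $h(\tau)$ large entries and the diagonal of $\sigma_1^\tau$ contains $h(\tau)+\alpha$. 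If both extensions were good, then simultaneously $h(\tau)=1$ and $h(\tau)+\alpha=1$, forcing $\alpha=0$ and contradicting $\alpha\ge 1$. Hence for every $\tau$ at least one of $\sigma_0^\tau,\sigma_1^\tau$ is bad, and since the $2(m-2)!$ extensions obtained this way are pairwise distinct (they differ either on $X\setminus\{i_1,i_2\}$ or at $i_1$), this yields $B\ge(m-2)!$, contradicting $B<(m-2)!$.

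The only delicate step is the construction of the anchor pair $(i_1,j_1),(i_2,j_2)$ of small entries pinned to a pre-existing large entry $(i^*,j^*)$, which is what guarantees $\alpha\ge 1$ and thereby rules out both extensions being good simultaneously. Once this configuration is in place, the counting reduces to the one-line observation that the two equations $h(\tau)=1$ and $h(\tau)+\alpha=1$ cannot both hold when $\alpha\ge 1$.
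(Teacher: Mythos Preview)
Your proof is correct and takes a genuinely different route from the paper's.

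The paper argues as follows. Using only the weaker bound $q<1/m$, a union bound over the $m$ cyclic shifts of a random permutation produces a permutation $\pi$ whose $m$ shifted diagonals $\{(i,\pi(i)+j):i\in[m]\}$ are all good; these diagonals partition the matrix, so each carries exactly one large entry. Fixing one of these large entries (say $a_{1,1}$), any other such large entry not lying on row~$1$ or column~$1$ would force at least $(m-2)!$ diagonals through two large entries, contradicting $q<\tfrac{1}{m(m-1)}$. Hence all $m$ designated large entries sit on row~$1$ or column~$1$, and a further two-entry collision rules out their being split between the two, yielding a $0$-strong line.

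Your argument bypasses the cyclic-shift partition entirely. Starting from any large entry $(i^*,j^*)$ and using only the failure of $0$-strength on its row and column, you manufacture a $2\times2$ block whose ``straight'' corners are small while at least one ``cross'' corner is large. The swap trick then shows that the two completions of any bijection on the remaining $m-2$ rows cannot both be good, immediately giving $(m-2)!$ bad diagonals.

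What each buys: the paper's approach gives slightly more structural information along the way (it actually locates $m$ large entries, one per shifted diagonal, and pins them to a single line), which is conceptually pleasant. Your approach is more local and more elementary---it needs no auxiliary partition of the matrix and no ``WLOG $\pi=\mathrm{id}$'' relabelling---and it makes the role of the threshold $\tfrac{1}{m(m-1)}$ completely transparent: it is exactly the reciprocal of the number of diagonals through a fixed pair of non-conflicting cells.
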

\begin{proof}
 Without loss of generality, we may assume that $X = Y = [m]$. Since $q < 1/m$, there must exist a permutation $\pi \in S_m$ such that all generalized diagonals of the form $\{ (i,\pi(i) + j) : i \in [m] \}$ are good. Without loss of generality, we may assume that $\pi$ is the identity, and that $a_{1,1}$ is large. For $j \in [m]$, let $a_{i,i+j}$ be the large entry on the corresponding diagonal. If $i \neq 1$ and $i + j \neq 1$, then a random generalized diagonal in $A[[m],[m]]$ passes through both $a_{1,1}$ and $a_{i,i+j}$ with probability $1/(m(m-1))$, contrary to our assumption. Therefore, all large entries are either on row~$1$ or on column~$1$. If there is an entry $a_{i,1}$ not on row~$1$ and an entry $a_{1,j}$ not on column~$1$, then we again reach a contradiction. It follows that either row~$1$ or column~$1$ consists of large elements only.
\end{proof}

We now improve this result using induction.

\begin{lemma} \label{lem:strong:0a}
 Suppose that $(X,Y)$ is $q$-good for some $q < \tfrac{1}{4m}$, where $m = |X|$. Then $(X,Y)$ has a $0$-strong line.
\end{lemma}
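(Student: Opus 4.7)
The plan is strong induction on $m = |X|$. The base case, $m \leq 5$, is handled directly by \autoref{lem:strong:0}: the inequality $1/(4m) \leq 1/(m(m-1))$ holds precisely in this range, so the hypothesis $q < 1/(4m)$ already implies the hypothesis $q < 1/(m(m-1))$ needed by the previous lemma.

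For the inductive step ($m \geq 6$), I would halve the problem. Pick any $X' \subset X$ with $|X'| = \lfloor m/2 \rfloor$ and apply \autoref{lem:good-half} to obtain $Y' \subset Y$ such that one of $(X', Y')$ or $(X \setminus X', Y \setminus Y')$ is $q$-good; relabel so that this $q$-good side is $(X_1, Y_1)$ with $m_1 := |X_1| \in \{\lfloor m/2 \rfloor, \lceil m/2 \rceil\}$. Since $m_1 < m$ and $q < 1/(4m) < 1/(4 m_1)$, the inductive hypothesis provides a $0$-strong line for $(X_1, Y_1)$, which by symmetry I may take to be a row $i^*$. Thus row $i^*$ of the full matrix $(a_{ij})$ already contains at least $m_1 \geq \lfloor m/2 \rfloor$ large entries, so the number $k$ of non-large entries in row $i^*$ of $(X, Y)$ satisfies $k \leq m - m_1 \leq \lceil m/2 \rceil$.

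To complete the proof I would show $k = 0$ by a direct counting argument. From $q$-goodness, the expected number of large entries on a random generalized diagonal is at least $1 - q$, hence the total count of large entries satisfies $L \geq m(1-q) > m - 1/4$ and so $L \geq m$ by integrality. If $k \geq 1$, then at least $L - (m - k) \geq k$ large entries lie outside row $i^*$; pick one, say $(i', j')$ with $i' \neq i^*$. Letting $J = \{j \in Y : a_{i^*j} \text{ is large}\}$ (so $|J| = m - k$), for every $j^* \in J \setminus \{j'\}$ any diagonal $\pi$ with $\pi(i^*) = j^*$ and $\pi(i') = j'$ contains two large entries and is bad; this yields at least $(m - k - 1)(m-2)!$ distinct bad diagonals. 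Comparing with the overall bad budget $q \cdot m! < m!/(4m) = (m-1)(m-2)!/4$ gives $k > 3(m-1)/4$. Since $\lceil m/2 \rceil \leq 3(m-1)/4$ whenever $m \geq 5$, this contradicts $k \leq \lceil m/2 \rceil$.

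The main obstacle is the tight arithmetic balancing these two bounds: the factor $3/4$ coming from the bad-diagonal count must strictly exceed the factor $1/2$ coming from the induction's half-sized strong line. This is precisely what makes $q < 1/(4m)$ the sharp threshold for the lemma, and also why direct application of \autoref{lem:strong-scattered} to bootstrap from a $(1 - m_1/m)$-strong line is not enough on its own in this regime — the condition $2\varrho m > 1$ there fails for $q$ close to $1/(4m)$, so one has to exploit the integrality of $k$ via an explicit two-large-entry counting, rather than by iterating the bootstrapping lemma.
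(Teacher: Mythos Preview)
Your argument is correct, but it follows a genuinely different route from the paper's proof.

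The paper varies the halving: for \emph{every} choice of $X' \subset X$ (and symmetrically $Y' \subset Y$) of size $\lfloor m/2\rfloor$ it applies \autoref{lem:good-half} and the inductive hypothesis to obtain a $0$-strong line in one of the two halves, and then invokes \autoref{lem:two-strong} to force all these strong lines to coincide in a single row (or column) $i$. It then shows that row~$i$ can have at most one small entry --- because any two small entries $a_{ij},a_{ik}$ could be separated by a suitable $Y'$, preventing row~$i$ from being $0$-strong in either half --- and finally rules out the ``exactly one small entry'' case by a short probability estimate.

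You instead use a \emph{single} halving to obtain a row $i^*$ with at least $\lfloor m/2\rfloor$ large entries, and then a direct two-large-entry count against the global bad-diagonal budget to force $k > 3(m-1)/4$, which for $m \geq 6$ contradicts $k \leq \lceil m/2\rceil$. Your route is more elementary: it avoids \autoref{lem:two-strong} entirely and needs only one application of the inductive hypothesis. The paper's route, on the other hand, is structurally closer to the proof of \autoref{lem:strong} that follows (where varying the halving and invoking \autoref{lem:two-strong} is essential), so it serves as a warm-up for that argument; it also pins down the strong line uniquely before the final count, whereas you commit to whichever row the single halving happens to produce. Both arguments exploit the same integrality observation $L \geq m$ at the end.
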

\begin{proof}
 The proof is by induction on~$m$. When $m \leq 5$, the claim follows from \autoref{lem:strong:0}, so suppose that $m \geq 6$.

 Let $X' \subset X$ be an arbitrary subset of size $s = \lfloor m/2 \rfloor$. \autoref{lem:good-half} shows that there exists $Y' \subset Y$ with $|Y'|=|X'|$, such that either $(X',Y')$ is $q$-good or $(X \setminus X',Y \setminus Y')$ is $q$-good. The induction hypothesis implies that either $(X',Y')$ or $(X \setminus X',Y \setminus Y')$ has a $0$-strong line. Similarly, if $Y' \subset Y$ is an arbitrary subset of size $s = \lfloor m/2 \rfloor$, then there exists $X' \subset X$ with $|X'|=|Y'|$ such that either $(X',Y')$ or $(X \setminus X',Y \setminus Y')$ has a 0-strong line.
 
Since $s \geq 3$, we have
 \[
  \left(\frac{s}{m}\right)^2 \geq \frac{9}{49} > \frac{1}{6} \geq \frac{1}{m} > 4q,
 \]
 so \autoref{lem:two-strong} implies that all 0-strong lines arising from different choices of $X'$ or $Y'$ must be defined by the same row or column --- say row~$i$.

 We claim that row~$i$ can have at most one small entry. If row~$i$ has at least two small entries $a_{ij},a_{ik}$, then there exists $Y' \subset Y$ with $|Y|=s$, such that $j \in Y'$ and $k \in Y \setminus Y'$. For any $X' \subset X$ with $|X'|=s$, row~$i$ can be $0$-strong in neither $(X',Y')$ nor $(X\setminus X',Y\setminus Y')$, a contradiction. Thus, row~$i$ has at most one small entry.

 Suppose that row~$i$ has exactly one small entry. Since $q < 1/4m < 1/m$, a random entry in $A[X,Y]$ is large with probability at least $(1-q)/m > 1/m - 1/m^2$, and so there must be at least $m$ large entries. Exactly $m-1$ of these are on row~$i$. Let $a_{kl}$ be another large entry. The probability that a random generalized diagonal passes through both $a_{kl}$ and one of the large entries on row~$i$ is at least
\[ \frac{1}{m} \left(1 - \frac{1}{m-1}\right) \geq \frac{4/5}{m} > \frac{1}{4m},\]
a contradiction. Hence, row~$i$ must be $0$-strong for $A[X,Y]$, completing the proof.
\end{proof}

We now use induction to tackle the case of large $|X|$.
\begin{lemma} \label{lem:strong}
 Suppose that $(X,Y)$ is $q$-good for $q < 1/50$. Then $(X,Y)$ has a $13q$-strong line. 
\end{lemma}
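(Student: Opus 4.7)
I propose proving \autoref{lem:strong} by induction on $m = |X|$. When $m$ is small enough that $q < 1/(4m)$ (which, since $q < 1/50$, covers at least $m \leq 12$) the base case follows from \autoref{lem:strong:0a}, which produces a $0$-strong line and hence certainly a $13q$-strong one. The induction thus reduces to the case $m \geq 1/(4q)$.

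For the inductive step I would use \autoref{lem:good-half} in both directions: for every $X' \subset X$ of size $\lfloor m/2 \rfloor$ (and symmetrically for every $Y' \subset Y$ of the same size), the lemma yields a matching complement so that at least one of $(X',Y'),(X \setminus X', Y \setminus Y')$ is $q$-good, and the induction hypothesis furnishes a $13q$-strong line inside it. I would then apply \autoref{lem:two-strong} to any pair of such sub-restrictions, with $p_1 = p_2 = 13q$ and $|X_i| = \lfloor m/2 \rfloor$; its size hypotheses $(1-13q)\lfloor m/2 \rfloor > 1$ and $(1-13q)^2 \lfloor m/2\rfloor^2/m^2 \geq 4q$ are routine to verify from $q < 1/50$ and $m \geq 1/(4q)$. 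The conclusion is that all the strong lines arising in this way must define the same line of $(X,Y)$, say row $i$ (the column case is symmetric). Crucially, whenever a $q$-good sub-restriction is produced by \autoref{lem:good-half}, that sub-restriction must then actually \emph{contain} row $i$ and have it as its $13q$-strong line---otherwise its own strong line would differ from row $i$, contradicting \autoref{lem:two-strong}.

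The second step converts this coincidence into a near-strong bound on row $i$. Writing $S_i = \{ j \in Y : a_{ij} \text{ is small} \}$, applying the second direction of \autoref{lem:good-half} to an arbitrary $Y' \subset Y$ of size $\lfloor m/2 \rfloor$ and using the observation above yields that either $|S_i \cap Y'| \leq 13q\lfloor m/2 \rfloor$ or $|S_i \cap (Y \setminus Y')| \leq 13q\lceil m/2 \rceil$. A balancing argument---choosing $Y'$ so that $|S_i \cap Y'|$ is as close as possible to $|S_i|/2$---shows that $|S_i| \geq 13qm + 2$ would violate both alternatives simultaneously, so $|S_i| \leq 13qm + 1$; that is, row $i$ is $(13q + 1/m)$-strong. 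A final application of \autoref{lem:strong-scattered} with $p = 13q + 1/m < 1/2$ bootstraps this: $\varrho = 2q/(1-p) < 4q$ and the conclusion gives that row $i$ is $(q + 3\varrho)$-strong, hence $13q$-strong. The technical conditions $m \geq 6$, $(1-p)m > 1$, $2\varrho m > 1$, $\varrho \leq 1/2$ all follow from $m \geq 1/(4q)$ and $q < 1/50$.

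The step I expect to be trickiest is the coincidence of strong lines: one has to verify the quantitative hypotheses of \autoref{lem:two-strong} for all the sub-restrictions in play and, more delicately, exploit the coincidence to force each $q$-good half produced by \autoref{lem:good-half} to contain row $i$, since this is what ultimately lets the counting argument for $|S_i|$ go through. Once this bookkeeping is settled, the balancing estimate on $|S_i|$ and the bootstrap via \autoref{lem:strong-scattered} are routine applications of the previously established lemmas.
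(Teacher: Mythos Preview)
Your proposal is correct and follows essentially the same approach as the paper's proof: induction on $m$ with \autoref{lem:strong:0a} as base case, \autoref{lem:good-half} plus the induction hypothesis to produce strong lines in half-sized sub-restrictions, \autoref{lem:two-strong} to force coincidence on a single line (say row~$i$), a splitting argument on $Y$ to bound the small entries of row~$i$ by $13qm+1$, and finally \autoref{lem:strong-scattered} to bootstrap $(13q+1/m)$-strong to $13q$-strong. Your explicit observation that each $q$-good half must actually \emph{contain} row~$i$ is a helpful clarification of a point the paper leaves implicit.
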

\begin{proof}
 The proof is by induction on $m := |X|$. When $m < 1/(4q)$, the statement of the lemma follows from \autoref{lem:strong:0a}, so suppose that $m \geq 1/(4q) \geq 12$.

 Let $X' \subset X$ be an arbitrary subset of size $s = \lfloor m/2 \rfloor \geq 6$. \autoref{lem:good-half} shows that there exists $Y' \subset Y$ with $|Y'|=|X'|$, such that either $(X',Y')$ is $q$-good or $(\overline{X'},\overline{Y'})$ is $q$-good. The induction hypothesis implies that either $(X',Y')$ or $(X \setminus X',Y \setminus Y')$ has a $13q$-strong line. Similarly, if $Y' \subset Y$ is an arbitrary subset of size $s = \lfloor m/2 \rfloor$, then there exists $X' \subset X$ with $|X'|=|Y'|$ such that either $(X',Y')$ or $(X \setminus X',Y \setminus Y')$ has a $13q$-strong line. 
 
 Since $(1-13q)s > 1$ and $(1-13q)^2 (s/m)^2 \geq 4q$, \autoref{lem:two-strong} implies that all the $13q$-strong lines arising from different choices of $X'$ or $Y'$ must be defined by the same row or column --- say row~$i$.

 We claim that row~$i$ has at most $\lfloor 13qm + 1 \rfloor$ small entries. Indeed, suppose it has at least $\lfloor 13qm + 2 \rfloor$ small entries. Then there exists a subset $Y' \subset Y$ with $|Y'|=s$, such that the $Y'$-part of row~$i$ contains at least $\lfloor 13qs + 1 \rfloor$ small entries and the $Y \setminus Y'$-part of it contains at least $\lfloor 13q(m-s)+1 \rfloor$ small entries. For any $X' \subset X$, row~$i$ is not $13q$-strong in either $(X',Y')$ nor $(X\setminus X',Y\setminus Y')$, a contradiction. Therefore, row~$i$ is a $(13q + 1/m)$-strong line for $(X,Y)$.

 Let $p = 13q + 1/m$. We are going to apply \autoref{lem:strong-scattered}. Clearly $m \geq 12$, and it is easy to check that $(1-p)m > 1$ and $\varrho \leq 1/2$. Slightly more delicately, we have
\[
 2\varrho m = \frac{4qm}{1 - 13q - 1/m} > 4qm \geq 1.
\]
 Hence, by \autoref{lem:strong-scattered}, row~$i$ is $(q+3\varrho)$-strong. Since $p < 1/2$, we have $q + 3\varrho < 13q$, so row~$i$ is $13q$-strong, completing the proof.
\end{proof}

It might seem that the condition $2\varrho m > 1$ is very tight. This will not matter for us, but in fact, one can prove a version of \autoref{lem:strong-scattered} with a weaker condition, at the cost of obtaining a worse guarantee on the strength of the line.

\begin{corollary} \label{cor:strong}
 There exists an $O(\error^{1/7})$-strong line for $([n],[n])$.
\end{corollary}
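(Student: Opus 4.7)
The plan is essentially to chain together the two main results established just above. By Corollary~\ref{cor:diagonal}, the restriction $([n],[n])$ is $50\error^{1/7}$-good; this is the Part~1 output, showing that almost every generalised diagonal of $(a_{ij})$ has exactly one large entry. I would then apply Lemma~\ref{lem:strong} with $X = Y = [n]$ and $q = 50\error^{1/7}$ to obtain a $13q = 650\error^{1/7}$-strong line, which is $O(\error^{1/7})$-strong as claimed.

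The only thing to check is the hypothesis $q < 1/50$ of Lemma~\ref{lem:strong}, i.e., $50\error^{1/7} < 1/50$, equivalently $\error < 1/(50 \cdot 50^7)$. This is fine: by assumption~\eqref{eq:e-assumption} we may take $\errub$ as small as we like (depending only on $c$, which here is $1/2$), and our assumption $\error < \errub$ together with the phrase ``since $\error$ is small enough'' licences exactly this kind of condition. So under the standing smallness assumption on $\error$, Lemma~\ref{lem:strong} applies directly.

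There is no real obstacle here; the corollary is a one-line combination of the previous results, and the only routine bookkeeping is absorbing the multiplicative constant $13 \cdot 50$ into the $O(\cdot)$ notation. In short, my proof would simply read: since $\error$ is small enough we have $50\error^{1/7} < 1/50$, so applying Lemma~\ref{lem:strong} to the $50\error^{1/7}$-good restriction $([n],[n])$ yields a $13\cdot 50\error^{1/7} = O(\error^{1/7})$-strong line.
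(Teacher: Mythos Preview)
Your proposal is correct and matches the paper's own proof, which simply says the corollary follows immediately from Corollary~\ref{cor:diagonal} and Lemma~\ref{lem:strong}. Your additional bookkeeping (checking $50\error^{1/7} < 1/50$ via the standing smallness assumption on $\error$) is exactly the routine detail the paper leaves implicit.
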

\begin{proof}
 Follows immediately from \autoref{cor:diagonal} and \autoref{lem:strong}.
\end{proof}

\subsection{Culmination of the proof} \label{sec:culmination}
 
In this section, we will see what \autoref{cor:strong} implies in terms of the original family $\cF$. Without loss of generality, we may assume for the rest of this section that the $O(\error^{1/7})$-strong line whose existence is guaranteed by \autoref{cor:strong} is row~$1$.

What the corollary implicitly says is that the matrix $(a_{ij})$ looks very like the canonical example shown in the introduction:
\canonicalexample
Indeed, the corollary shows that, without loss of generality, the first row consists mainly of elements which are very close to $\pm 1$. Since the line must sum to zero, we know that roughly half of these are close to $1$, and roughly half to $-1$. This information will enable us to deduce that $\mathcal{F}$ is close to a disjoint union of roughly $n/2$ cosets.

For $i,j \in [n]$, we define
$$\tau_{ij} := \frac{|\cF \cap T_{ij}|}{(n-1)!}.$$
By \eqref{eq:a-formula}, we have

\begin{equation} \label{eq:t-formula}
 \tau_{ij} = \frac{1}{2} + \frac{n}{2(n-1)} a_{ij}.
\end{equation}
 
So we have $\tau_{ij} \approx (a_{ij}+1)/2$. More precisely, we have the following.
\begin{lemma} \label{lem:t-rough}
 Each $\tau_{ij}$ is $1/n$-close to $(a_{ij}+1)/2$. If $a_{ij}$ is large, then $\tau_{ij}$ is $26\error^{1/7}$-close to $\{0,1\}$.
\end{lemma}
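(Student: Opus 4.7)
The proof is essentially a direct computation, combined with the bounds that have already been established. My plan proceeds in three short steps.

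First, I would verify the relation \eqref{eq:t-formula} between $\tau_{ij}$ and $a_{ij}$ by unpacking the definitions. Since $f = 2\charf{\cF}-1$ and $T_{ij}$ has exactly $(n-1)!$ elements, a direct calculation of the inner product gives
\[
\langle f, T_{ij}\rangle = \frac{1}{n!}\sum_{\pi \in T_{ij}} (2\charf{\cF}(\pi) - 1) = \frac{(n-1)!}{n!}\bigl(2\tau_{ij} - 1\bigr) = \frac{2\tau_{ij}-1}{n}.
\]
Multiplying by $n-1$ and solving for $\tau_{ij}$ immediately yields $\tau_{ij} = \tfrac{1}{2} + \tfrac{n}{2(n-1)} a_{ij}$.

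Second, for the $1/n$-closeness, I would subtract $(a_{ij}+1)/2$ from \eqref{eq:t-formula} to obtain
\[
\tau_{ij} - \frac{a_{ij}+1}{2} = \frac{a_{ij}}{2(n-1)}.
\]
Since $\tau_{ij} \in [0,1]$, the formula $a_{ij} = \tfrac{n-1}{n}(2\tau_{ij}-1)$ forces $|a_{ij}| \leq (n-1)/n$, so $|\tau_{ij} - (a_{ij}+1)/2| \leq 1/(2n) < 1/n$. This gives the first claim.

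Third, for the claim about large entries: if $a_{ij}$ is large, then by definition $|a_{ij}|$ is $50\error^{1/7}$-close to $1$, hence $a_{ij}$ itself is $50\error^{1/7}$-close to some $s \in \{\pm 1\}$, and so $(a_{ij}+1)/2$ is $25\error^{1/7}$-close to $(s+1)/2 \in \{0,1\}$. Combining this with the first claim, $\tau_{ij}$ is within $25\error^{1/7} + 1/(2n)$ of $\{0,1\}$. The only non-routine bit is checking that this is bounded by $26\error^{1/7}$, which reduces to $1/(2n) \leq \error^{1/7}$. This follows from assumption \eqref{eq:e-assumption} ($\error \geq 1/n^{7/3}$), which gives $\error^{1/7} \geq 1/n^{1/3} \geq 1/(2n)$ for $n \geq 4$ via \eqref{eq:n-assumption}.

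There is no real obstacle here; the lemma is a short bookkeeping statement that converts quantitative information about the matrix entries $a_{ij}$ (supplied by the earlier analysis) into information about the 1-coset densities $\tau_{ij}$, which is the language needed to extract the family $\cG$ in the next step.
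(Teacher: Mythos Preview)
Your proof is correct and follows essentially the same route as the paper: rewrite $\tau_{ij}$ via \eqref{eq:t-formula}, bound the remainder $a_{ij}/(2(n-1))$ using $|a_{ij}|\leq 1$, and then combine the $25\error^{1/7}$ error with the $O(1/n)$ error via assumption~\eqref{eq:e-assumption}. The only cosmetic differences are that you rederive \eqref{eq:t-formula} explicitly and use the slightly sharper bound $|a_{ij}|\leq (n-1)/n$ to get $1/(2n)$ instead of the paper's $1/(2(n-1))\leq 1/n$.
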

\begin{proof}
 The formula \eqref{eq:a-formula} for $a_{ij}$ implies that $|a_{ij}| \leq 1$. We have
 \[
 \tau_{ij} = \frac{a_{ij}+1}{2} + \frac{a_{ij}}{2(n-1)}. 
 \]
 The second term has absolute value at most $\tfrac{1}{2(n-1)} \leq \tfrac{1}{n}$, since $n \geq 2$.

 A large entry is $50\error^{1/7}$ close in magnitude to~$\pm 1$, and so $(a_{ij}+1)/2$ is $25\error^{1/7}$-close to $\{0,1\}$. Finally, assumption~\ref{eq:e-assumption} implies that $1/n \leq \error^{1/7}$.
\end{proof}

We are now almost ready to prove our main result.
\begin{lemma} \label{lem:large-entries}
 The number of $\tau_{1i}$ which are $26\error^{1/7}$-close to~$1$ is $O(\error^{1/7})n$-close to $n/2$.
\end{lemma}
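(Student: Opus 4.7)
The plan is to combine the strong row from \autoref{cor:strong} with the row-sum identity from \autoref{lem:a} and the large-entry translation in \autoref{lem:t-rough}. Essentially, almost every entry in row~$1$ is unambiguously close to $+1$ or $-1$, and the row must sum to zero, which forces the two populations to have nearly equal size.

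First I would invoke \autoref{cor:strong} to conclude that (without loss of generality, as already assumed) there is a set $B \subseteq [n]$ with $|B| = O(\error^{1/7}) n$ such that for every $j \in [n] \setminus B$ the entry $a_{1j}$ is large, i.e.\ $|a_{1j}|$ is $50\error^{1/7}$-close to~$1$. Since $\error$ is small enough, the intervals $\upto{1}{50\error^{1/7}}$ and $\upto{-1}{50\error^{1/7}}$ are disjoint, so the large entries in row~$1$ split unambiguously into $P = \{j \notin B : a_{1j} \in \upto{1}{50\error^{1/7}}\}$ and $N = \{j \notin B : a_{1j} \in \upto{-1}{50\error^{1/7}}\}$. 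By \autoref{lem:t-rough}, $j \in P$ implies $\tau_{1j} \in \upto{1}{26\error^{1/7}}$, while $j \in N$ implies $\tau_{1j} \in \upto{0}{26\error^{1/7}}$. Since $\error$ is small enough these neighbourhoods are disjoint, so writing $S = \{j : \tau_{1j} \in \upto{1}{26\error^{1/7}}\}$, we get $P \subseteq S \subseteq P \cup B$ and hence $\bigl||S| - |P|\bigr| \leq |B| = O(\error^{1/7}) n$.

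Next I would pin down $|P|$ using the row-sum identity $\sum_j a_{1j} = 0$ from \autoref{lem:a}. The formula \eqref{eq:a-formula} gives $|a_{1j}| = (n-1)|\langle f, T_{1j}\rangle| \leq (n-1)\cdot(1/n) < 1$. Splitting the row sum,
\[
 0 = \sum_{j\in P} a_{1j} + \sum_{j\in N} a_{1j} + \sum_{j\in B} a_{1j}
   = |P| - |N| + O(\error^{1/7}(|P|+|N|)) + O(|B|)
   = |P| - |N| + O(\error^{1/7} n),
\]
using that each term in the $P$-sum (resp.\ $N$-sum) is within $50\error^{1/7}$ of $+1$ (resp.\ $-1$), and that each term in the $B$-sum has magnitude at most~$1$. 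Combining this with $|P| + |N| = n - |B| = n - O(\error^{1/7}) n$ yields $|P| = n/2 + O(\error^{1/7}) n$, and therefore $|S| = n/2 + O(\error^{1/7}) n$, which is exactly the claim.

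I do not foresee a genuine obstacle: the lemma is essentially a bookkeeping consequence of the preceding machinery. The only subtlety is ensuring that the sign of the large entries in row~$1$ is unambiguous and that the two target intervals around $0$ and $1$ for $\tau_{1j}$ do not collide; both rely on $\error$ being small enough, an assumption we have at our disposal via~\ref{eq:e-assumption}.
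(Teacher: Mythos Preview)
Your argument is correct and follows essentially the same route as the paper: use the $O(\error^{1/7})$-strong row to ensure almost all $a_{1j}$ are large, then exploit the row-sum identity to balance the two signs. The only cosmetic difference is that the paper works directly with the $\tau_{1i}$ (which sum to $n/2$) and bounds $N_0,N_1$ separately, whereas you work with the $a_{1j}$ (which sum to $0$) and read off $|P|-|N|$ directly; the two are equivalent via \eqref{eq:t-formula}.
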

\begin{proof}
 Let $N_0$ be the number of $\tau_{1i}$ which are $26\error^{1/7}$-close to~$0$, and let $N_1$ be the number of $\tau_{1i}$ which are $26\error^{1/7}$-close to~$1$. \autoref{lem:a} shows that
 \[T:= \sum_{i=1}^n \tau_{1i} = \frac{n}{2}. \]
 On the other hand,
 \[ (1-26\error^{1/7}) N_1 \leq T \leq 26\error^{1/7} N_0 + (n - N_0) = n - (1-26\error^{1/7}) N_0. \]
 Substituting $T=n/2$, we obtain
 \[ N_0,N_1 \leq (1+O(\error^{1/7})) n/2. \]
 The fact that row 1 is an $O(\error^{1/7})$-strong line implies that $N_0 + N_1 = (1-O(\error^{1/7})) n$, and so
 \[N_1 \geq (1-O(\error^{1/7})) n - N_0 \geq (1-O(\error^{1/7}))n/2. \qedhere \]
\end{proof}

The main result easily follows.
\begin{corollary} \label{cor:main}
 Suppose that $n \geq 4$, and
 $$\frac{1}{n^{7/3}} \leq \epsilon_1 < \epsilon_0,$$
 where $\epsilon_0>0$ is an absolute constant. Let $\cF \subset S_n$ be a family of permutations with size $|\cF| = n!/2$, satisfying
 \[ \EE[(f-f_1)^2] = \error, \]
where $f = 2\chi_\cF-1$, and $f_1$ is the orthogonal projection of $f$ onto $U_1$. Then there exists a family $\cG \subset S_n$ which is a union of $\lfloor n/2 \rfloor$ disjoint 1-cosets, satisfying
 \[ |\cG \triangle \cF| \leq O(\error^{1/7}) n!. \]
\end{corollary}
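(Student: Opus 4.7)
The plan is to read off the family $\cG$ directly from the structure of the strong line produced by \autoref{cor:strong}. By \autoref{cor:strong} together with the discussion at the start of \autoref{sec:culmination}, we may assume without loss of generality that row~$1$ of the matrix $(a_{ij})$ is an $O(\error^{1/7})$-strong line. Set
\[ I = \{\, i \in [n] : \tau_{1i} \text{ is } 26\error^{1/7}\text{-close to } 1 \,\}, \quad \cG = \bigsqcup_{i \in I} T_{1i}. \]
Since the $T_{1i}$ are pairwise disjoint, $\cG$ is a genuine union of $|I|$ disjoint $1$-cosets, and \autoref{lem:large-entries} immediately gives $|I| = dn$ with $|d - 1/2| = O(\error^{1/7})$.

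It remains to bound $|\cG \triangle \cF|$. The plan is to split the symmetric difference by the value of $\pi(1)$ and estimate each piece using $\tau_{1j}$. For $i \in I$, the coset $T_{1i}$ contributes at most $(1-\tau_{1i})(n-1)! \leq 26\error^{1/7}(n-1)!$ permutations to $\cG \setminus \cF$, so
\[ |\cG \setminus \cF| \;\leq\; 26\error^{1/7} \cdot |I| \cdot (n-1)! \;=\; O(\error^{1/7})\, n!. \]
For $j \notin I$, split further according to whether $a_{1j}$ is large or small. If $a_{1j}$ is large with $\tau_{1j}$ $26\error^{1/7}$-close to $0$ (by \autoref{lem:t-rough}, the only other option for large entries), then $|T_{1j} \cap \cF| \leq 26\error^{1/7}(n-1)!$; summing over such $j$ contributes at most $O(\error^{1/7})\, n!$ to $|\cF \setminus \cG|$. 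The remaining $j \notin I$ are exactly the indices where $a_{1j}$ is small, and by the $O(\error^{1/7})$-strength of row~$1$ there are only $O(\error^{1/7})\, n$ such indices, each of which contributes at most $(n-1)!$ to $|\cF \setminus \cG|$, again yielding $O(\error^{1/7})\, n!$.

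Combining the two bounds gives $|\cF \triangle \cG| = O(\error^{1/7})\, n!$, as required. The main conceptual work has already been done in the preceding subsections: the single obstacle worth double-checking here is purely bookkeeping, namely verifying that the three categories of indices (members of $I$, non-large non-$I$ indices, and large non-$I$ indices) genuinely exhaust $[n]$ and that the bound $1/n \leq \error^{1/7}$ from assumption \eqref{eq:e-assumption} is used to absorb the small gap between $\tau_{1j}$ and $(a_{1j}+1)/2$ coming from \autoref{lem:t-rough}. No new lemmas are needed, and the overall loss of $O(\error^{1/7})$ is dictated by the weakest of the previously established estimates.
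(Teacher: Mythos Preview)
Your proof is correct and follows essentially the same approach as the paper: define $\cG$ as the union of the $T_{1i}$ for which $\tau_{1i}$ is $26\error^{1/7}$-close to~$1$, and invoke \autoref{lem:large-entries} for the size of this index set. The only minor difference is in the bookkeeping for $|\cF \triangle \cG|$: the paper lower-bounds $|\cF \cap \cG|$ and then uses the identity $|\cF \triangle \cG| = |\cF| + |\cG| - 2|\cF \cap \cG|$ together with the exact value $|\cF| = n!/2$, whereas you bound $|\cG \setminus \cF|$ and $|\cF \setminus \cG|$ separately by a short case split --- both are equally valid and equally short.
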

\begin{proof}
  Lemma \ref{lem:large-entries} implies that the set $S = \{ i : \tau_{1i} \geq 1 - 26\error^{1/7} \}$ has cardinality which is $O(\error^{1/7})n$-close to $n/2$. By assumption, $\epsilon_1 \geq n^{-7/3}$, and therefore
  $$||S| - \lfloor n/2 \rfloor| \leq ||S|-n/2|+1/2 \leq O(\epsilon_1^{1/7})n+1/2 \leq O(\epsilon_1^{1/7})n+n^{2/3} \leq O(\epsilon_1^{1/7})n.$$
  Define
  \[ \cG' = \bigcup_{i \in S} T_{1i}. \]
  Note that the $T_{1i}$ are pairwise disjoint, and so $|\cG'| = (n-1)!|S|$. By the definition of $S$, we have
  \[
   |\cF \cap \cG'| \geq (1-26\error^{1/7})(n-1)!|S| = (\tfrac{1}{2} - O(\error^{1/7}))n!.
  \]
  It follows that
  \begin{align*}
   |\cF \triangle \cG'| & = |\cF| + |\cG'| - 2|\cF \cap \cG| \\
   &\leq n!/2 + 
   (1 + O(\error^{1/7}))n!/2 - 2(\tfrac{1}{2} - O(\error^{1/7}))n!\\
   & = O(\error^{1/7}) n!.
  \end{align*}
  By adding or deleting
  $$||S|-\lfloor n/2 \rfloor| = O(\epsilon_1^{1/7})n$$
  $T_{1i}$'s from $\mathcal{G}'$, we may produce a family $\mathcal{G} \subset S_n$ which is a union of $\lfloor n/2 \rfloor$ disjoint 1-cosets, and satisfies $|\cF \triangle \cG| = O(\error^{1/7}) n!$, completing the proof.
\end{proof}

\section{Proof in the general case} \label{sec:general-case}
Up until now, we have only discussed the case $|\cF| = n!/2$. Much of the argument remains intact for general values of $c = |\cF|/n!$, although an additional argument is required in the culmination of the proof. Also, $\errub$ will now depend upon $c$. More concretely, let
\begin{equation} \label{eq:c-assumption}
\eta = \min\{c,1-c\}.
\end{equation}
As we shall see below, for the proof to go through, we will need $\errub = O(\eta^7)$. Indeed, we shall make the following assumption.

 \begin{equation} \label{eq:e-assumption:general} \tag{\ref{eq:e-assumption}'}
 \frac{1}{n^{7/3}} \leq \error \leq c_0 \eta^7, 
 \end{equation}

To explore all of these issues, let us follow the existing proof and see how it adapts for arbitrary $c$.

\paragraph{Matrix representation (\autoref{sec:matrix})}

The coefficients $a_{ij}$ for $c = 1/2$ were defined so that the following holds.
\begin{equation} \label{eq:f1-a}
f_1 = \sum_{i,j} a_{ij} T_{ij}.
\end{equation}
As we remarked in the proof of \autoref{lem:a}, our definition of $a_{ij}$ can be derived from this formula via Fourier inversion. For arbitrary $c$, the corresponding definition is
\begin{equation} \label{eq:a-formula:general} \tag{\ref{eq:a-formula}'}
 a_{ij} = (n-1) \langle f, T_{ij} \rangle - \frac{n-2}{n} (2c-1).
\end{equation}
Under this definition, \eqref{eq:f1-a} holds. Straightforward calculations yield the following updated versions of \autoref{lem:a} and \autoref{lem:a-l2}.

{
\renewcommand{\thelemma}{\ref{lem:a}'}
\addtocounter{lemma}{-1}
\begin{lemma} \label{lem:a:general} 
 We have
\[ f_1 = \sum_{i,j} a_{ij} T_{ij}. \]
 Furthermore,
\[
 \sum_j a_{ij} = 2c-1 \quad \forall i \in [n],\quad \sum_i a_{ij} = 2c-1\quad \forall j \in [n].
\]
 For each permutation $\pi$, we have:
\[
 f_1(\pi) = \sum_i a_{i\pi(i)}.
\]
\end{lemma}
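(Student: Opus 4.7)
The plan is to mimic the proof of \autoref{lem:a} almost verbatim, with the only genuine change being the bookkeeping of the nonzero constant $(2c-1)$ that now appears as the value of $\langle f, 1\rangle$. Throughout, I rely on three inner-product facts: $\sum_j T_{ij} = 1 = \sum_i T_{ij}$, the basic $\langle T_{ij}, T_{kl}\rangle$ computations used in \autoref{lem:a}, and $\langle f, 1 \rangle = \mathbb{E}[f] = 2c-1$.

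I would start with the row/column sum statement, since it underwrites everything else. For each $i$,
\[
\sum_j a_{ij} = (n-1)\sum_j \langle f, T_{ij}\rangle - (n-2)(2c-1) = (n-1)\langle f, 1\rangle - (n-2)(2c-1) = 2c-1,
\]
using $\sum_j T_{ij} = 1$. The argument for column sums is identical. As an immediate corollary (the analogue of the identity~\eqref{eq:zerosum} used in the original proof), summing over the complementary block gives
\[
\sum_{k \neq i,\ l \neq j} a_{kl} = n(2c-1) - 2(2c-1) + a_{ij} = (n-2)(2c-1) + a_{ij}.
\]

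For the representation formula, I would argue exactly as in \autoref{lem:a}: both $f_1$ and $\sum_{i,j} a_{ij} T_{ij}$ lie in $U_1$ (the former by definition, the latter because each $T_{ij}$ lies in $U_1$ by \autoref{lem:TinU}), so equality follows from checking inner products against each $T_{ij}$, which span $U_1$ by \autoref{lem:TspansU}. Using the three values $\langle T_{ij},T_{ij}\rangle = 1/n$, $\langle T_{ij},T_{kl}\rangle = 1/(n(n-1))$ for $i\neq k,\ j\neq l$, and $0$ otherwise, together with the complementary-block identity above, one computes
\[
\bigl\langle \textstyle\sum_{k,l} a_{kl} T_{kl},\, T_{ij} \bigr\rangle = \frac{a_{ij}}{n} + \frac{(n-2)(2c-1) + a_{ij}}{n(n-1)} = \frac{a_{ij}}{n-1} + \frac{n-2}{n(n-1)}(2c-1),
\]
which, by the very definition~\eqref{eq:a-formula:general}, equals $\langle f, T_{ij}\rangle = \langle f_1, T_{ij}\rangle$. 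This matches the derivation of the constant $\frac{n-2}{n}(2c-1)$ appearing in the corrected formula and simultaneously justifies it. The pointwise evaluation $f_1(\pi) = \sum_i a_{i\pi(i)}$ then follows instantly since $T_{ij}(\pi) = \mathbf{1}[\pi(i)=j]$.

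There is no real obstacle; the only thing to be careful about is not to confuse the two natural ways of organizing the calculation (one can either first guess the offset $-\frac{n-2}{n}(2c-1)$ from the requirement that row and column sums be ``nice,'' or one can derive it as what is forced by the inner-product identity). Either order is fine, but presenting the row/column sum statement first keeps the exposition parallel to the $c=1/2$ proof and makes the complementary-block identity available when the main computation is carried out.
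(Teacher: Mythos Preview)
Your proposal is correct and follows exactly the approach implicit in the paper, which merely notes that ``straightforward calculations yield the following updated versions of \autoref{lem:a}''; you have supplied precisely those calculations, with the correct analogue of~\eqref{eq:zerosum} and the right bookkeeping for the constant $\frac{n-2}{n}(2c-1)$.
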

}

{
\renewcommand{\thelemma}{\ref{lem:a-l2}'}
\addtocounter{lemma}{-1}
\begin{lemma} \label{lem:a-l2:general}
 We have
\[
 \sum_{i,j} a_{ij}^2 = (n-1)(1-\error) - (n-2)(2c-1)^2.
\]
\end{lemma}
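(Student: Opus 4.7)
The plan is to follow the proof of \autoref{lem:a-l2} essentially verbatim, picking up the single additional term that appears because the row and column sums of $(a_{ij})$ are now $2c-1$ rather than $0$. The starting point is the same orthogonality identity: since $f = 2\chi_\cF - 1$ takes values in $\{\pm 1\}$, we have $\|f\|_2^2 = 1$, and since $f_1$ is the orthogonal projection of $f$ onto $U_1$, this gives $\|f_1\|_2^2 = 1 - \error$, exactly as in the balanced case.

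Next, I would expand $\|f_1\|_2^2 = \sum_{i,j,k,l} a_{ij} a_{kl} \langle T_{ij}, T_{kl}\rangle$ using the same inner product values $\langle T_{ij}, T_{ij}\rangle = 1/n$, $\langle T_{ij}, T_{kl}\rangle = 1/(n(n-1))$ when $i \neq k$ and $j \neq l$, and $0$ otherwise. The only step that requires updating is the evaluation of the inner sum $\sum_{k \neq i,\, l \neq j} a_{kl}$. Applying \autoref{lem:a:general}, which now yields $\sum_l a_{il} = \sum_k a_{kj} = 2c-1$ and hence $\sum_{k,l} a_{kl} = n(2c-1)$, inclusion--exclusion gives
\[
 \sum_{\substack{k \neq i\\ l \neq j}} a_{kl} = n(2c-1) - 2(2c-1) + a_{ij} = (n-2)(2c-1) + a_{ij},
\]
replacing the identity $\sum_{k \neq i,\, l \neq j} a_{kl} = a_{ij}$ used in the $c=1/2$ case.

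Substituting this into the expansion of $\|f_1\|_2^2$ and using $\sum_{i,j} a_{ij} = n(2c-1)$ to evaluate the cross term should give
\[
 \|f_1\|_2^2 = \frac{1}{n}\sum_{i,j} a_{ij}^2 + \frac{(n-2)(2c-1)^2}{n-1} + \frac{1}{n(n-1)} \sum_{i,j} a_{ij}^2 = \frac{1}{n-1}\sum_{i,j}a_{ij}^2 + \frac{(n-2)(2c-1)^2}{n-1}.
\]
Equating this with $1-\error$ and clearing the factor of $n-1$ yields the claimed formula.

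There is no real obstacle here; the argument is purely mechanical bookkeeping, and the $(n-2)(2c-1)^2$ correction is the only trace that $c$ is no longer $1/2$. In particular, when $c = 1/2$ the correction vanishes and one recovers \autoref{lem:a-l2} exactly. The only thing worth double-checking is the sign of the correction: the row/column sums contribute positively to $\|f_1\|_2^2$ via the constant (degree-$0$) component of $f_1$, so the \emph{subtracted} $(n-2)(2c-1)^2$ on the right of the lemma statement is the right direction.
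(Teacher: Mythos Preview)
Your proposal is correct and follows exactly the approach the paper intends: the paper itself gives no explicit proof of \autoref{lem:a-l2:general}, stating only that ``straightforward calculations'' yield it as the updated version of \autoref{lem:a-l2}, and your argument is precisely that straightforward calculation carried out in full. The key updated identity $\sum_{k\neq i,\,l\neq j} a_{kl} = (n-2)(2c-1) + a_{ij}$ and the use of $\sum_{i,j} a_{ij} = n(2c-1)$ are exactly what is needed, and your final expression for $\|f_1\|_2^2$ checks out.
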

}

\paragraph{Random restrictions (\autoref{sec:restrictions})}

The proof of \autoref{lem:e-moments} becomes more cumbersome. Curiously enough, the variance of the random variable in question is actually maximized when $c = 1/2$, and so the bound on the variance holds true for arbitrary $c$. Here is the updated version.

{
\renewcommand{\thelemma}{\ref{lem:e-moments}'}
\addtocounter{lemma}{-1}
\begin{lemma} \label{lem:e-moments:general}
 If $(X,Y)$ is a restriction, define $m(X,Y) = \EE_{T_{X,Y}}[g_1(X,Y)]$. Let $(X,Y) \sim \rand$. Then $\EE_{\mathcal{R}}[m] = c - 1/2$ and $\VV_{\mathcal{R}}[m] \leq 1/2n$. 
\end{lemma}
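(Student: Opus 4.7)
\emph{Plan.} The proof follows the structure of \autoref{lem:e-moments} essentially verbatim, the only change being that the row and column sums of $(a_{ij})$ now equal $2c-1$ (by \autoref{lem:a:general}) rather than zero. The formula $m(X,Y) = \frac{1}{|X|}\sum_{i\in X, j\in Y} a_{ij}$ still holds. Conditioning on $|X|=s$, each pair $(i,j)$ lies in $X\times Y$ with probability $(s/n)^2$, so
\[
\EE_{\mathcal{R}}[m\mid |X|=s] = \frac{1}{s}\cdot\frac{s^2}{n^2}\sum_{i,j}a_{ij} = \frac{s(2c-1)}{n},
\]
and averaging over $|X|$ with $\EE[|X|]=n/2$ gives $\EE_{\mathcal{R}}[m] = c - 1/2$. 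For the variance I would use the law of total variance,
\[
\VV_{\mathcal{R}}[m] = \EE_{\mathcal{R}}[\VV[m\mid |X|]] + \VV_{\mathcal{R}}[\EE[m\mid |X|]],
\]
the second summand being $\frac{(2c-1)^2}{n^2}\VV[|X|] = \frac{(2c-1)^2}{4n}$.

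\emph{Conditional variance.} For the first summand I would repeat the expansion of $|X|^2m^2$ into the four cases $(i{=}k, j{=}l)$, $(i{=}k, j{\neq}l)$, $(i{\neq}k, j{=}l)$, $(i{\neq}k, j{\neq}l)$, and take conditional expectations with the probabilities $(s/n)^2$, $(s/n)\cdot s(s-1)/(n(n-1))$, likewise, and $(s(s-1)/(n(n-1)))^2$. Writing $A := \sum_{i,j}a_{ij}^2$ and $B := (2c-1)^2$, the four pair sums needed are
\[
\sum_{i,j}a_{ij}^2 = A, \qquad \sum_{i,\,j\neq l}a_{ij}a_{il} = nB - A = \sum_{j,\,i\neq k}a_{ij}a_{kj}, \qquad \sum_{\substack{i\neq k\\ j\neq l}}a_{ij}a_{kl} = n(n-2)B + A,
\]
each following from expanding $\bigl(\sum_j a_{ij}\bigr)^2 = B$ and summing appropriately. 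Substituting and then subtracting $(\EE[m\mid s])^2 = s^2B/n^2$, a small miracle of cancellation (setting $r := (s-1)/(n-1)$) collapses the expression to
\[
\VV[m\mid |X|=s] = \frac{(1-r)^2}{n^2}(A-B).
\]

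\emph{Final bound.} Taking expectations over $s\sim\mathrm{Binom}(n,1/2)$ gives $\EE[(1-r)^2] = n(n+1)/(4(n-1)^2)$, and substituting $A = (n-1)(1-\error) - (n-2)B$ from \autoref{lem:a-l2:general} yields $A - B = (n-1)(1-\error-B)$. Combining with $\VV_{\mathcal{R}}[\EE[m\mid |X|]] = B/(4n)$ and simplifying gives
\[
\VV_{\mathcal{R}}[m] = \frac{(n+1)(1-\error) - 2B}{4n(n-1)} \leq \frac{n+1}{4n(n-1)} \leq \frac{1}{2n},
\]
where the last inequality uses $n \geq 4$. The only obstacle is arithmetic bookkeeping; there is no conceptual novelty. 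The pleasant feature is that the contribution one might expect from the shifted centre $c-1/2$ is exactly annihilated by the $-2B$ term from the conditional computation, so the bound $1/(2n)$ survives unchanged (and is in fact strictly improved when $c\neq 1/2$).
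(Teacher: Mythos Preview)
Your proof is correct and follows essentially the same route as the paper: adapt the expansion in \autoref{lem:e-moments} using the new row/column sums $2c-1$ from \autoref{lem:a:general}, and feed in \autoref{lem:a-l2:general} at the end. Your use of the law of total variance is a minor organisational convenience; the paper (in the $c=1/2$ case) computes $\EE[m^2]$ directly, which coincides with the variance there since $\EE[m]=0$, and for general $c$ the paper simply states the final formula without showing the intermediate steps you have supplied.

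One small remark: your exact formula
\[
\VV_{\mathcal{R}}[m]=\frac{(n+1)(1-\error)-2(2c-1)^2}{4n(n-1)}
\]
differs from the paper's stated ``exact'' formula $\frac{n+1}{4n(n-1)} - \frac{(2c-1)^2}{2n(n-1)}$ by the factor $(1-\error)$ on the $(n+1)$ term. Your version is the genuinely exact one (it specialises correctly to the $c=1/2$ computation, which used $A=(n-1)(1-\error)$); the paper appears to have silently dropped the $\error$. Since your expression is smaller, the bound $\le 1/(2n)$ is unaffected.
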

\begin{proof}
 We only give the exact formula for $\VV_{\mathcal{R}}[m]$:
\[
 \VV_{\mathcal{R}}[m] = \frac{n+1}{4n(n-1)} - \frac{(2c-1)^2}{2n(n-1)} \leq \frac{1}{2n}. \qedhere
\]
\end{proof}
}

The proof of \autoref{lem:typical-restriction} remains the same, adjusting for the general value of $\EE_{\mathcal{R}}[m]$.
{
\renewcommand{\thelemma}{\ref{lem:typical-restriction}'}
\addtocounter{lemma}{-1}
\begin{lemma} \label{lem:typical-restriction:general}
 Let $(X,Y) \sim \rand$. With probability at least $1-3\error^{1/7}$, $(X,Y)$ satisfies the following properties:
\begin{enumerate}[(a)]
 \item $g(X,Y)$ is $(\error^{4/7},\error^{1/7})$-almost Boolean.
 \item $\EE[g_1(X,Y)]$ and $\EE[g_2(X,Y)]$ are $\error^{1/7}$-close to $c-1/2$.
 \item $\EE[(|g(X,Y)|-1)^2] \leq \error^{6/7}$.
\end{enumerate}
\end{lemma}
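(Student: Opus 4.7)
The proof will follow that of \autoref{lem:typical-restriction} almost word for word; the only substantive change is that the Chebyshev step in part (b) must be recentered around the new mean $c - 1/2$ provided by \autoref{lem:e-moments:general}'. As before, I will establish each of the three properties with probability at least $1 - \error^{1/7}$, and then apply a union bound.

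For part (a), the original argument never used $\EE[f]$. If the claim failed then, using the fact that drawing $(X,Y) \sim \rand$ followed by a uniform $\pi \in T_{X,Y}$ yields a uniform $\pi \in S_n$, we would obtain $\Pr[|f_1| \notin \upto{1}{\error^{1/7}}] > \error^{5/7}$; since $f$ is $\pm 1$-valued this forces $\EE[(f_1 - f)^2] > \error^{5/7}(\error^{1/7})^2 = \error$, a contradiction. For part (c), the identity $\EE[(|f_1|-1)^2] \leq \EE[(f_1-f)^2] = \error$ again uses only that $f \in \{\pm 1\}$, which is true regardless of~$c$; the tower $\EE_\rand[\EE_{T_{X,Y}}[(|g(X,Y)|-1)^2]] = \EE[(|f_1|-1)^2] \leq \error$ and Markov's inequality finish this part.

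For part (b), I will invoke \autoref{lem:e-moments:general}', which gives $\EE_\rand[m] = c - 1/2$ and $\VV_\rand[m] \leq 1/(2n)$. Chebyshev's inequality therefore yields
\[
\Pr_\rand\!\left[\,\bigl|m(X,Y) - (c-\tfrac{1}{2})\bigr| \geq \error^{-1/14}/\sqrt{n}\,\right] \leq \tfrac{1}{2}\error^{1/7}.
\]
Assumption~\eqref{eq:e-assumption:general} gives $\error \geq 1/n^{7/3}$, equivalently $\error^{3/14} \geq 1/\sqrt{n}$, so $\error^{-1/14}/\sqrt{n} \leq \error^{1/7}$. Since $(\overline{X},\overline{Y}) \sim \rand$ has the same distribution as $(X,Y)$, the identical bound holds for $g_2(X,Y)$, and a union bound yields the desired probability.

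There is no real obstacle here: the only thing to verify carefully is that the variance bound in \autoref{lem:e-moments:general}' is the same $1/(2n)$ as in the balanced case, so all the numerical constants propagate through unchanged. One can note in passing that the shift of the mean by $c - 1/2 = O(1)$ is absorbed harmlessly into the \emph{centering} of Chebyshev's inequality; no additional hypotheses on $c$ or $\error$ beyond \eqref{eq:e-assumption:general} are required at this stage, and the dependence of $\errub$ on~$c$ that will ultimately appear enters only later in the argument (e.g.\ in the culmination).
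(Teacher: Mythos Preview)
Your proposal is correct and follows exactly the paper's approach: the paper simply states that the proof of \autoref{lem:typical-restriction} remains the same, adjusting for the general value of $\EE_{\mathcal{R}}[m]$, and you have faithfully carried this out. One minor quibble with your closing parenthetical: the first place where the dependence of $\errub$ on $c$ actually enters is \autoref{lem:decomposition:general} (via the condition $\eta > 15\error^{1/7}$), not the culmination; but this has no bearing on the correctness of the present proof.
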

}

We redefine a \emph{typical restriction} as one satisfying these updated properties.

\paragraph{Decomposition under random restriction (\autoref{sec:decomposition})}

\autoref{lem:mean-mode} and \autoref{lem:deltas} are not affected by the value of $c$. The value of $c$ comes into play in the main result of this section, \autoref{lem:decomposition}, in two ways. Firstly, the result is affected by the change in \autoref{lem:e-moments}. Secondly, there is a hidden dependence of $\errub$ on $c$, namely
\begin{equation} \label{eq:errub:decomposition}
 \eta > 15 \error^{1/7}. 
\end{equation}

{
\renewcommand{\thelemma}{\ref{lem:decomposition}'}
\addtocounter{lemma}{-1}
\begin{lemma} \label{lem:decomposition:general}
 Suppose that $(X,Y)$ is a typical restriction. Then either $g_1$ is $(3\error^{1/7},19\error^{1/7})$-almost close to $c - 1/2$ and $g_2 + c - 1/2$ is $(4\error^{1/7},24\error^{1/7})$-almost Boolean, or the same is true with the roles of $g_1$ and $g_2$ reversed.
\end{lemma}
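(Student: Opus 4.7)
The plan is to follow the proof of \autoref{lem:decomposition} nearly verbatim, substituting $c - 1/2$ for $0$ wherever the mean of $g_i$ on a typical restriction was previously invoked, and tracking the one place where a new dependence on $\eta$ enters.

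\autoref{lem:deltas} is independent of $c$, so after choosing WLOG $p_1 \le 3\error^{1/7}$ and selecting $\alpha$ by averaging, we still obtain some $C_1 = g_1(\alpha_1)$ such that $g_1$ is $(3\error^{1/7}, 2\error^{1/7})$-almost close to $C_1$. Applying \autoref{lem:mean-mode} to $\phi := g_1 - C_1$ exactly as in the $c=1/2$ case gives $|\EE[g_1] - C_1| = O(\error^{1/7})$. The updated typicality statement \autoref{lem:typical-restriction:general}(b) now gives $|\EE[g_1] - (c-1/2)| \le \error^{1/7}$ instead of bounding $|\EE[g_1]|$, so combining yields $|C_1 - (c-1/2)| \le 17\error^{1/7}$ and $g_1$ is $(3\error^{1/7}, 19\error^{1/7})$-almost close to $c - 1/2$, as desired.

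A second application of \autoref{lem:deltas} together with the same averaging/case-analysis used in the $c=1/2$ proof produces some $C_4$ such that $g_2 - C_4$ is $(4\error^{1/7}, 2\error^{1/7})$-almost Boolean. It remains to show $|C_4 + (c - 1/2)| = O(\error^{1/7})$, which then gives that $g_2 + c - 1/2 = (g_2 - C_4) + (C_4 + c - 1/2)$ is almost Boolean with the stated parameters. Combining the three high-probability events --- typicality of $g$, $g_1$ almost close to $c - 1/2$, and $g_2 - C_4$ almost Boolean --- we see that for most $\pi \in T_{X,Y}$, $g(\pi)$ is simultaneously $O(\error^{1/7})$-close to one of $(c - 1/2 + C_4) \pm 1$ and $\error^{1/7}$-close to one of $\pm 1$. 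Hence either $c - 1/2 + C_4$ is $O(\error^{1/7})$-close to $0$ (the desired conclusion), or it is $O(\error^{1/7})$-close to $\pm 2$, in which case $g$ is $(O(\error^{1/7}), O(\error^{1/7}))$-almost close to a single value $L \in \{\pm 1\}$.

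The main obstacle, and the only place where the hypothesis on $\eta$ enters, is ruling out this last ``degenerate'' case. Applying \autoref{lem:mean-mode} with $\phi = g - L$ and $C = 0$ yields $|\EE[g] - L| = O(\error^{1/7})$, while typicality (summing parts (b) of \autoref{lem:typical-restriction:general} for $g_1$ and $g_2$) forces $|\EE[g] - (2c - 1)| \le 2\error^{1/7}$. Combining, $|L - (2c - 1)| = O(\error^{1/7})$; but since $L \in \{\pm 1\}$ we have $|L - (2c - 1)| \ge 2\eta$, so the degenerate case is excluded precisely when $\eta$ dominates the implicit constant times $\error^{1/7}$, matching the assumption \eqref{eq:errub:decomposition}. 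This quantitative ingredient, absent from the $c = 1/2$ case, is exactly what forces $\errub = O(\eta^7)$ in \eqref{eq:e-assumption:general}.
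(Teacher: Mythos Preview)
Your proposal is correct and follows essentially the same approach as the paper's own proof, which likewise says to repeat the $c=1/2$ argument verbatim, substituting $c-1/2$ for $0$ via the updated typicality \autoref{lem:typical-restriction:general}(b), and noting that the only new dependence on $\eta$ arises when ruling out the case that $g$ is almost close to a single $L\in\{\pm1\}$. One small slip: in that final application of \autoref{lem:mean-mode} you should take $C=L$ (so that $\phi+C=g$ and condition~(b) is supplied by typicality~(c), $\EE[(|g|-1)^2]\le\error^{6/7}$), not $C=0$; with that correction the bound $|\EE[g]-L|\le 28\error^{1/7}$ and the contradiction with $|L-(2c-1)|\ge 2\eta$ go through exactly as you wrote.
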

\begin{proof}
 At the very end of the proof, we need to rule out possibility that for all permutations $\pi \in S_n$ satisfying
\begin{equation} \tag{\ref{eq:simul-bool}}
  g(\pi) \in \upto{\{C_4 \pm 1\}}{21\error^{1/7}} \text{ and } g(\pi) \in \upto{\{\pm 1\}}{\error^{1/7}}, 
\end{equation}
the sign of $g(\pi)$ is the same. This would imply that $g$ is $(8\error^{1/7},\error^{1/7})$-almost close to $L \in \{\pm 1\}$. Applying \autoref{lem:mean-mode}, this in turn would imply that $\EE[g]$ is $28\error^{1/7}$-close to $L$. On the other hand, typicality implies that $\EE[g]$ is $2\error^{1/7}$-close to $2c-1$. In order to obtain a contradiction, we need to assume that $2c-1$ is not $30\error^{1/7}$-close to $\pm 1$. This is equivalent to~\eqref{eq:errub:decomposition}.
\end{proof}
}

\paragraph{Random partitions (\autoref{sec:partition})}

At the beginning of this section, we defined the concept of a \emph{good partition}, which we now need to update. For a permutation $\pi \in S_n$ and a partition $X \subset [n]$, define
\[ P_1 := \sum_{i \in X} a_{i\pi(i)}, \quad P_2 := \sum_{i \in \overline{X}} a_{i\pi(i)}. \]
We say that the partition $X$ is {\em good for $\pi$} if either $P_1$ is $25\error^{1/7}$-close to $c-1/2$ and $P_2$ is $25\error^{1/7}$-close to $\{-c-1/2,3/2-c\}$, or the same is true with the roles of $P_1$ and $P_2$ reversed. We say that the permutation $\pi \in S_n$ is \emph{good} if with probability at least $4/5$, a random partition $X \sim \randx$ is good for $\pi$ (this is the same definition as before). With the updated definition, \autoref{lem:decomposition:perm} remains the same.

As for \autoref{lem:diagonal}, apart from slightly updating the statement, there is also a hidden dependence of $\errub$ upon $c$, namely
\begin{equation} \label{eq:errub:partition}
 \eta = \Omega(\error^{1/7}).
\end{equation}

{
\renewcommand{\thelemma}{\ref{lem:diagonal}'}
\addtocounter{lemma}{-1}
\begin{lemma} \label{lem:diagonal:general}
 Suppose that $\pi \in S_n$ is a good permutation. Then for some $m \in [n]$, $|a_{m\pi(m)}|$ is $50\error^{1/7}$-close to~$\{2c,2(1-c)\}$, and for $i \neq m$, $|a_{i\pi(i)}| \leq 50\error^{1/7}$.
\end{lemma}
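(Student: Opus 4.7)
The strategy is to mimic the proof of \autoref{lem:diagonal}, carefully tracking how the three values $0, 1, -1$ arising in the $c = 1/2$ analysis generalize to $\{c-1/2,\, 3/2-c,\, -c-1/2\}$ (the means and $\pm 1$-shifts of the Boolean decomposition provided by \autoref{lem:decomposition:general}). Write $s_i = a_{i\pi(i)}$, $T(X) = \sum_{i \in X} s_i$, and $S_0 = T([n]) = f_1(\pi)$. For any good partition $X$ we have $T(X) + T(\overline{X}) = S_0$ close to $(c-1/2) + (-c-1/2) = -1$ or $(c-1/2) + (3/2-c) = 1$, so $S_0$ is $50\error^{1/7}$-close to some fixed $K \in \{\pm 1\}$. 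Set $V_K = \{c - 1/2,\, K - c + 1/2\}$ and $d_K := |K - 2c + 1|$; note $d_1 = 2(1-c)$ and $d_{-1} = 2c$, so $d_K \in \{2c, 2(1-c)\}$ and $d_K \geq 2\eta$.

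First, the switching argument gives the $\{2c, 2(1-c)\}$-dichotomy. For each $i$ there is some $Y \subset [n]\setminus\{i\}$ with both $Y$ and $Y \cup \{i\}$ good for $\pi$, and since both $T(Y), T(Y\cup\{i\})$ are $25\error^{1/7}$-close to elements of $V_K$, one reads off that $|s_i|$ is $50\error^{1/7}$-close to $0$ (small) or to $d_K$ (large). The hypothesis $\error \leq c_0 \eta^7$, for a suitable absolute constant $c_0$, ensures these cases are unambiguous and that $d_K \in \{2c, 2(1-c)\}$ as required.

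Next I would rule out the possibility that every $s_i$ is small, by the same Berry-Esseen argument as in \autoref{lem:diagonal}. Writing $T = S_0/2 + \sum_i s_i(\cond{\{i \in X\}} - 1/2)$ with $X \sim \randx$, the mean is $\approx K/2$. Since $T$ lies at distance $\geq \eta - O(\error^{1/7})$ from its mean with probability at least $4/5$ (the two modes of $V_K$ are each at distance $\eta$ from $K/2$, so $\sigma = \Omega(\eta)$, and the Berry-Esseen error is $\psi = O(\error^{1/7}/\eta)$, which is small under $\error \leq c_0 \eta^7$. Taking $I_1, I_2$ of length $50\error^{1/7}$ around the two modes and $I_3$ the interval between them (of length $\geq d_K - 50\error^{1/7}$), bitonicity of the normal density forces
\[
\frac{\Pr[N \in I_3]}{|I_3|} \;\geq\; \min\left(\frac{\Pr[N \in I_1]}{|I_1|},\frac{\Pr[N \in I_2]}{|I_2|}\right),
\]
which together with the probability bounds $\Pr[T \in I_1], \Pr[T \in I_2] \geq 2/5$ and $\Pr[T \in I_3] \leq 1/5$ yields a contradiction provided $d_K \gtrsim \error^{1/7}$; this is exactly what $\error \leq c_0 \eta^7$ delivers.

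Finally, to show at most one $s_i$ is large, suppose $|s_m|, |s_l|$ are both $50\error^{1/7}$-close to $d_K$; since $4 \cdot (1/5) < 1$, pick $Y \subset [n]\setminus\{m,l\}$ with $Y, Y\cup\{m\}, Y\cup\{l\}, Y\cup\{m,l\}$ all good. Say $T(Y) \approx \min V_K$; then $T(Y\cup\{m\}), T(Y\cup\{l\}) \in V_K$ forces $s_m, s_l \approx +d_K$, so $T(Y\cup\{m,l\}) \approx \min V_K + 2d_K = \max V_K + d_K$ sits at distance $\Omega(\eta)$ from $V_K$, contradicting goodness of $Y \cup \{m, l\}$. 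The main obstacle throughout is parameter bookkeeping near $c \in \{0, 1\}$: each step requires $d_K$ (hence $\eta$) to dominate various $O(\error^{1/7})$ terms, and $\error \leq c_0 \eta^7$ is precisely the combined hypothesis needed to make all three steps above succeed simultaneously.
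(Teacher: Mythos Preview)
Your proposal is correct and follows essentially the same approach as the paper: the paper likewise adapts the proof of \autoref{lem:diagonal} by redefining ``large'' as $50\error^{1/7}$-close to $\{2c,2(1-c)\}$, replaces the $\sigma^2$ lower bound by $\tfrac{4}{5}(\eta-50\error^{1/7})^2$ and the $|I_3|$ lower bound by $2\eta-50\error^{1/7}$, and notes that the uniqueness-of-the-large-entry argument goes through unchanged. Your write-up is in fact slightly more explicit than the paper's (you introduce $V_K$, $d_K$ and spell out the final two-large-entries contradiction), but the underlying ideas are identical; the only minor imprecision is that the two values in $V_K$ sit at distance $c$ or $1-c$ (hence $\geq \eta$) from $K/2$, not exactly $\eta$, which only helps.
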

\begin{proof}
 We redefine `large' elements as those which are $50\error^{1/7}$-close in magnitude to $\{2c,2(1-c)\}$. Analyzing what happens when a single $s_i$ switches over, we deduce as in the original proof that each $s_i$ is either small or large.

 For the Berry-Esseen argument, we need a lower bound on $\sigma^2$. In the original proof, we deduced such a bound from the fact that with probability at least $4/5$, it holds that $|T-S_0/2| \geq 1/2 - 50\error^{1/7}$. The same argument shows that with probability at least $4/5$, it holds that $|T-S_0/2| \geq \eta - 50\error^{1/7}$, and therefore
\[ \sigma^2 \geq \frac{4}{5} (\eta - 50\error^{1/7})^2. \]
 This implies that
\[ \psi = \frac{O(\error^{1/7})}{\eta - 50\error^{1/7}}, \]
 where the implied constant does not depend upon $c$. Condition~\eqref{eq:errub:partition} guarantees that (say) $\psi \leq 1/10$.

 The intervals $I_1,I_2$ retain their length, while for $I_3$ we get the guarantee
\[ |I_3| \geq 2\eta - 50\error^{1/7}. \] 
 Recall inequality~\eqref{eq:bitonic}, from which we would like to derive a contradiction:
 \begin{equation} \tag{\ref{eq:bitonic}}
  \frac{\Pr[N \in I_3]}{|I_3|} \geq \min\left(\frac{\Pr[N \in I_1]}{|I_1|}, \frac{\Pr[N \in I_2]}{|I_2|}\right).
 \end{equation}
 The left-hand side is at most (roughly) $1/(5\eta)$, while the right-hand side is $\Omega(\error^{-1/7})$. We get a contradiction if $1/\eta = O(\error^{-1/7})$, which is the same condition as~\eqref{eq:errub:partition}.

 The rest of the proof goes through without change.
\end{proof}
}

\paragraph{Strong lines (\autoref{sec:strong})}

As we mentioned in the introduction, this part is almost completely independent of the rest of the proof. All we have to do is redefine a \emph{large entry} so that it conforms to the specification of \autoref{lem:diagonal:general}, that is, $|a_{ij}|$ is $50\error^{1/7}$-close to $\{2c,2(1-c)\}$. With this small change, all the results in this section carry through.

\paragraph{Culmination of the proof (\autoref{sec:culmination})}

This section requires a small overhaul. Whereas for $c = 1/2$, a large element was always close to $\pm 1$, now all we know is that it is close in magnitude to $\{2c,2(1-c)\}$. Its actual value is therefore close to one of the values $\{2c,2(1-c),-2c,-2(1-c)\}$. Defining $\tau_{ij}$ as before, this means that $\tau_{ij}$ is close to one of the values $\{0,1,2c,2c-1\}$. Of these, one is always outside $[0,1]$ and so cannot occur, and one is a `medium' value, 
\[
 \gamma := \begin{cases} 2c & \text{if } c < 1/2, \\ 2c-1 & \text{if } c > 1/2 \end{cases},
\]
lying inside the interval $(0,1)$. An additional argument is needed to show that such medium values do not actually occur in large quantities on the strong line.

We first rearrange the formula~\eqref{eq:a-formula:general}:
\begin{equation} \label{eq:t-formula:general} \tag{\ref{eq:t-formula}'}
 \tau_{ij} := \frac{|\cF \cap T_{ij}|}{(n-1)!} = \frac{2(n-2)c+1}{2(n-1)} + \frac{n}{2(n-1)} a_{ij}.
\end{equation}
Roughly, we have $\tau_{ij} \approx a_{ij}/2 + c$. More precisely, we have the following analogue of \autoref{lem:t-rough}.

{
\renewcommand{\thelemma}{\ref{lem:t-rough}'}
\addtocounter{lemma}{-1}
\begin{lemma} \label{lem:t-rough:general}
 Each $\tau_{ij}$ is $2/n$-close to $a_{ij}/2 + c$.

 If $a_{ij}$ is large, then $\tau_{ij}$ is $26\error^{1/7}$-close to $\{0,1,\gamma\}$.
\end{lemma}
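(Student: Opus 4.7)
For the first statement, I would simply rearrange~\eqref{eq:t-formula:general}:
\[
\tau_{ij} - \left(\tfrac{a_{ij}}{2} + c\right) \;=\; \frac{1 - 2c + a_{ij}}{2(n-1)}.
\]
To bound the right-hand side, note that $|\langle f, T_{ij}\rangle| \le 1/n$ (because $f$ is $\pm 1$-valued and $|T_{ij}|/n! = 1/n$), so~\eqref{eq:a-formula:general} gives $|a_{ij}| \le (n-1)/n + (n-2)/n = (2n-3)/n < 2$. Combined with $|1-2c|\le 1$, this yields $|\tau_{ij} - (a_{ij}/2 + c)| \le 3/(2n) \le 2/n$, proving the first claim.

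\textbf{Second statement.} When $a_{ij}$ is large, the definition of ``large'' (as redefined for general $c$ in \autoref{sec:strong}) says $|a_{ij}|$ is $50\error^{1/7}$-close to $\{2c,\, 2(1-c)\}$; dividing by $2$ and adding $c$ shows that $a_{ij}/2 + c$ is $25\error^{1/7}$-close to the four-element set
\[
\{2c-1,\; 0,\; 2c,\; 1\}.
\]
The first part of the lemma then gives $\tau_{ij}$ within $25\error^{1/7} + 2/n$ of this set. The bound $2/n \le \error^{1/7}$ is a direct consequence of assumption~\eqref{eq:e-assumption:general} and $n \ge 4$: indeed $(2/n)^7 = 128/n^7 \le 1/n^{7/3} \le \error$. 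So $\tau_{ij}$ is $26\error^{1/7}$-close to $\{2c-1,\, 0,\, 2c,\, 1\}$.

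\textbf{Eliminating the spurious value.} Since $\tau_{ij}\in[0,1]$, exactly one member of $\{2c-1,0,2c,1\}$ lies strictly outside $[0,1]$ (the case $c=1/2$ having been handled already): it is $2c-1<0$ when $c<1/2$, and $2c>1$ when $c>1/2$. Either way, the bad value sits at distance exactly $2\eta$ from $[0,1]$. Choosing the absolute constant $c_0$ in~\eqref{eq:e-assumption:general} small enough (e.g.\ $c_0 \le 13^{-7}$) guarantees $26\error^{1/7} < 2\eta$, so $\tau_{ij}$ cannot be $26\error^{1/7}$-close to the spurious value. The three surviving candidates are precisely $\{0,1,\gamma\}$, completing the proof. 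There is no substantial conceptual obstacle here; the one thing to watch is the way the assumption $\error \le c_0\eta^7$ enters to kill the out-of-range value, which is exactly the dependence of $\errub$ on $\eta$ that the paper has been accumulating throughout.
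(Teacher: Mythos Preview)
Your first claim and most of the second are correct. The gap is in the ``eliminating the spurious value'' step: the distance from the spurious value to $[0,1]$ is $|1-2c|$, not $2\eta$. (For $c<1/2$ the spurious value is $2c-1$, at distance $1-2c$ from $[0,1]$; for $c>1/2$ it is $2c$, at distance $2c-1$.) In particular, when $c$ is near $1/2$ this distance is tiny even though $\eta$ is near $1/2$, so no choice of the absolute constant $c_0$ in~\eqref{eq:e-assumption:general} can force $26\error^{1/7}$ below it uniformly in $c$. Your proposed mechanism therefore does not eliminate the spurious value.

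The fix is simpler and requires no use of the bound $\error \le c_0\eta^7$ at all. Since $\tau_{ij}\in[0,1]$ by definition, if $\tau_{ij}$ is $26\error^{1/7}$-close to the spurious value $v\notin[0,1]$, then it is automatically at least as close to the nearest endpoint of $[0,1]$: e.g.\ for $c<1/2$ and $v=2c-1<0$, the inequality $|\tau_{ij}-v|\le 26\error^{1/7}$ together with $\tau_{ij}\ge 0>v$ gives $0\le \tau_{ij}\le v+26\error^{1/7}<26\error^{1/7}$, so $\tau_{ij}$ is $26\error^{1/7}$-close to $0$. The case $c>1/2$ is symmetric with endpoint $1$. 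This is exactly what the paper means when it says the out-of-range value ``cannot occur''.
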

}

As before, without loss of generality, we may assume that row 1 is the strong line. Before proving the analogue of \autoref{lem:large-entries}, we need to show that for any two `reasonable' large entries on row 1, either both are close to $\{0,1\}$, or both are close to $\gamma$. Since most entries turn out to be `reasonable', this implies a dichotomy: either most entries are close to $\{0,1\}$, or most are close to $\gamma$. Since the row sums to roughly $cn$, the second case cannot occur.

For $j \in [n]$, let $r(j)$ be the probability that a random generalized diagonal passing through $a_{1j}$ is good. We say that an entry $a_{1j}$ is \emph{reasonable} if $a_{1j}$ is large, $r(j) \geq 4/5$ and $g(\{1\},\{j\})$ is $(1/5,\error^{1/7})$-almost Boolean. (Recall that $g(\{1\},\{j\})$ is the function $f_1$ restricted to permutations in $T_{1j}$.)

\begin{lemma} \label{lem:middle-value}
 Assume that $\gamma$ is $156\error^{1/7}$-far from $\{0,1\}$. Let $j,k \in [n]$ be such that $a_{1j}$ and $a_{1k}$ are reasonable. Either both $\tau_{1j}$ and $\tau_{1k}$ are $26\error^{1/7}$-close to $\gamma$, or neither of them are.
\end{lemma}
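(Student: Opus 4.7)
My plan is to argue by contradiction: assume, without loss of generality (by swapping $j$ and $k$), that $\tau_{1j}$ is $26\error^{1/7}$-close to $\gamma$ while $\tau_{1k}$ is $26\error^{1/7}$-close to $\{0,1\}$. Note that by \autoref{lem:t-rough:general} and the hypothesis that $\gamma$ is $156\error^{1/7}$-far from $\{0,1\}$, each of $\tau_{1j},\tau_{1k}$ does fall into one and only one of the clusters ``close to $\gamma$'' and ``close to $\{0,1\}$.'' The key tool is the bijection $\phi \colon T_{1j} \to T_{1k}$ given by $\phi(\pi) = (jk)\circ\pi$; for each $\pi$, the permutation $\pi' := \phi(\pi)$ agrees with $\pi$ on every coordinate except $i=1$ (where $\pi(1)=j$ and $\pi'(1)=k$) and $i^* := \pi^{-1}(k)$ (where $\pi(i^*)=k$ and $\pi'(i^*)=j$).

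Combining $r(j), r(k) \geq 4/5$ with the $(1/5, \error^{1/7})$-almost Booleanness of $g(\{1\},\{j\})$ and $g(\{1\},\{k\})$, a union bound produces at least one $\pi \in T_{1j}$ for which both $\pi$ and $\pi'$ are good and satisfy $|f_1(\pi)|,|f_1(\pi')| \in \upto{1}{\error^{1/7}}$ (this event has probability at least $1/5$ over $\pi \in T_{1j}$). Since $a_{1j}$ is large and $\pi$ is good, \autoref{lem:diagonal:general} forces $a_{1j}$ itself to be the unique large entry on the generalised diagonal of $\pi$, so every other entry $a_{i\pi(i)}$ ($i \geq 2$) has magnitude at most $50\error^{1/7}$; the same conclusion applies on the diagonal of $\pi'$ with $k$ in place of $j$. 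In particular, both $|a_{i^*j}|$ and $|a_{i^*k}|$ are bounded by $50\error^{1/7}$. Expanding
\[
 f_1(\pi') - f_1(\pi) = (a_{1k} - a_{1j}) + (a_{i^*j} - a_{i^*k})
\]
and using $f_1(\pi') - f_1(\pi) \in \upto{\{0,\pm 2\}}{2\error^{1/7}}$ together with $|a_{i^*j} - a_{i^*k}| \leq 100\error^{1/7}$, I conclude that $a_{1k} - a_{1j}$ is $102\error^{1/7}$-close to $\{0, \pm 2\}$.

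To finish, let $A_j, A_k$ be the elements of $\{\pm 2c,\pm 2(1-c)\}$ closest to $a_{1j},a_{1k}$ respectively; then $A_k - A_j$ is $202\error^{1/7}$-close to $\{0, \pm 2\}$. In the contradictory scenario, $A_j$ is pinned down by $\tau_{1j} \approx \gamma$ (it equals $2c$ when $c<1/2$ and $-2(1-c)$ when $c>1/2$), while $A_k$ takes one of two possibilities compatible with $\tau_{1k}\in\{0,1\}$. A direct case check then shows that $A_k - A_j$ lies in $\{-4c,\,2-4c,\,4(1-c)\}$ depending on the sub-case. The main obstacle is constant bookkeeping: one must verify that the hypothesis ``$\gamma$ is $156\error^{1/7}$-far from $\{0,1\}$'' -- equivalently, that $c$ is $78\error^{1/7}$-far from $\{0,1/2,1\}$ in the relevant range -- forces each of $4c$, $|2-4c|$, and $4(1-c)$ to exceed $202\error^{1/7}$. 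Once this is checked, $A_k - A_j$ cannot be $202\error^{1/7}$-close to $\{0,\pm 2\}$, yielding the required contradiction.
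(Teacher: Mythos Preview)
Your proof is correct and follows essentially the same strategy as the paper: the transposition bijection $\pi \mapsto (jk)\pi$ between $T_{1j}$ and $T_{1k}$, the union bound using the four ``$4/5$'' conditions in the definition of \emph{reasonable} to locate a single $\pi$ with $\pi$ and $(jk)\pi$ simultaneously good and almost Boolean, and the identity expressing $a_{1j}-a_{1k}$ in terms of $f_1(\pi)$, $f_1((jk)\pi)$, and the two small off-entries $a_{i^*j}, a_{i^*k}$.

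The only real divergence is in the final arithmetic. The paper immediately passes from $a$-values to $\tau$-values via the affine relation in~\eqref{eq:t-formula:general}, obtaining that $\tau_{1j}-\tau_{1k}$ is $104\error^{1/7}$-close to $\{0,\pm1\}$; since one of $\tau_{1j},\tau_{1k}$ is within $26\error^{1/7}$ of $\gamma$ and the other within $26\error^{1/7}$ of $\{0,1\}$, the triangle inequality gives $\gamma$ within $156\error^{1/7}$ of $\{-1,0,1,2\}\cap(0,1)=\{0,1\}$, contradicting the hypothesis directly. Your route instead stays with $a$-values, introduces the nearest ``ideal'' points $A_j,A_k\in\{\pm 2c,\pm2(1-c)\}$, and finishes by a case split on the sign of $c-1/2$ to pin down $A_k-A_j\in\{-4c,\,2-4c,\,4(1-c)\}$ and verify each is $202\error^{1/7}$-far from $\{0,\pm2\}$. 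This is perfectly valid (your constants check out: each of $4c$, $|2-4c|$, $4(1-c)$ exceeds $312\error^{1/7}$ under the hypothesis), but it is a bit more laborious; working with $\tau$ as the paper does absorbs the case split and yields the tighter constant $156$ that matches the lemma's hypothesis on the nose.
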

\begin{proof}
If $\pi \in T_{1j}$ then $(jk)\pi \in T_{1k}$. Since $4 \cdot 1/5 < 1$, there exists a permutation $\pi \in T_{1j}$ such that both $f_1(\pi)$ and $f_1((jk)\pi)$ are $\error^{1/7}$-close to $\pm 1$, and both $\pi$ and $(jk)\pi$ are good. Let $i = \pi^{-1}(k)$. Note that
 \[
  (f_1(\pi) - a_{ik}) - (f_1((jk)\pi) - a_{ij}) = a_{1j} - a_{1k} =
  \frac{2(n-1)}{n} (\tau_{1j} - \tau_{1k}).
 \]
 Since both $a_{ik}$ and $a_{ij}$ are small, the left-hand side is $102\error^{1/7}$-close to $\{0,\pm 2\}$, and therefore $\tau_{1j} - \tau_{1k}$ is $102\error^{1/7}$-close to $\{0, \pm n/(n-1)\}$. Assumption~\eqref{eq:e-assumption}' implies that $2\error^{1/7} > 2/n > 1/(n-1)$, and so $\tau_{1j} - \tau_{1k}$ is $104\error^{1/7}$-close to $\{0, \pm 1\}$.

 Suppose for a contradiction that $\tau_{1j}$ is $26\error^{1/7}$-close to $\gamma$, and that $\tau_{1k}$ is $26\error^{1/7}$-close to $b \in \{0,1\}$. Then $\gamma$ is $156\error^{1/7}$-close to $b + \{0,\pm1\} \in \{-1,0,1,2\}$. Since $\gamma \in (0,1)$, $\gamma$ must be $156\error^{1/7}$-close to $\{0,1\}$, contradicting the assumption of the lemma.
\end{proof}

Then following lemma says that most of the entries $a_{1j}$ are reasonable.

\begin{lemma} \label{lem:reasonable}
 The probability that $a_{1j}$ is not reasonable for a uniform random $j \in [n]$ is $O(\error^{1/7})$.
\end{lemma}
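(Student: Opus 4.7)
The plan is to show that each of the three conditions in the definition of \emph{reasonable} fails for at most $O(\error^{1/7})$ fraction of indices $j \in [n]$, then conclude by a union bound.

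For the first condition (that $a_{1j}$ is large), this is immediate: row~$1$ is $O(\error^{1/7})$-strong by \autoref{cor:strong}, so by definition at most an $O(\error^{1/7})$ fraction of the entries on row~$1$ fail to be large.

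For the second condition ($r(j) \geq 4/5$), I would exploit the fact that the sets $(T_{1j} : j \in [n])$ partition $S_n$, so the distribution obtained by choosing $j \in [n]$ uniformly and then $\pi \in T_{1j}$ uniformly coincides with the uniform distribution on $S_n$. Hence
\[
\EE_{j \in [n]}\bigl[\Pr_{\pi \in T_{1j}}[\pi \text{ is bad}]\bigr] = \Pr_{\pi \in S_n}[\pi \text{ is bad}] \leq 50\error^{1/7},
\]
by \autoref{lem:decomposition:perm}. Markov's inequality applied to the inner probability then shows that the fraction of $j$ with $\Pr_{\pi \in T_{1j}}[\pi \text{ is bad}] > 1/5$ is at most $250\error^{1/7}$, i.e., $r(j) \geq 4/5$ for all but $O(\error^{1/7})n$ values of~$j$.

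For the third condition, I would start from $\EE[(f-f_1)^2] = \error$ and, since $f$ is $\pm 1$-valued, deduce $\EE[(|f_1|-1)^2] \leq \error$, exactly as at the start of the proof of \autoref{lem:typical-restriction}(c). One application of Markov gives $\Pr_\pi[||f_1(\pi)|-1| > \error^{1/7}] \leq \error^{5/7}$. Then, using once more that the $T_{1j}$ partition $S_n$, I average this probability over $j$ and apply Markov a second time to the inner probability: the fraction of $j$ with $\Pr_{\pi \in T_{1j}}[||f_1(\pi)|-1| > \error^{1/7}] > 1/5$ is at most $5\error^{5/7} = O(\error^{1/7})$, so $g(\{1\},\{j\})$ is $(1/5,\error^{1/7})$-almost Boolean for all but $O(\error^{1/7})n$ values of~$j$.

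There is no real obstacle here; the only thing to be mindful of is the two-level Markov trick (averaging a conditional probability over the partition $\{T_{1j}\}_{j}$ and then bounding the fraction of $j$ on which it is large). A union bound over the three failure sets yields the claimed $O(\error^{1/7})$ bound on the fraction of non-reasonable~$j$.
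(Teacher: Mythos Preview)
Your proposal is correct and matches the paper's proof almost exactly: bound each of the three failure modes by a Markov argument over the partition $\{T_{1j}\}_{j\in[n]}$ and take a union bound. One small point: $r(j)$ is defined as the probability that the generalized diagonal through $a_{1j}$ is good in the sense of \autoref{sec:strong} (exactly one large entry), so the paper invokes \autoref{cor:diagonal} directly; your route via \autoref{lem:decomposition:perm} needs the extra observation (from \autoref{lem:diagonal}) that every good permutation has a good diagonal, after which the bounds coincide.
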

\begin{proof}
\autoref{cor:strong} shows that the probability that $a_{1j}$ is not large is $O(\error^{1/7})$.

\autoref{cor:diagonal} shows that $\EE[1-r(j)] \leq 50\error^{1/7}$, and so, by Markov's inequality,
$$\Pr_{j \in [n]}[1-r(j) > 1/5] < \frac{50\error^{1/7}}{1/5} = 250\error^{1/7}.$$

Notice that $f_1$ is $(\error^{5/7},\error^{1/7})$-almost Boolean, since otherwise $\EE[(f_1-f)^2] \geq \EE[(|f_1|-1)^2] > \error^{5/7} \error^{2/7} = \error$. Therefore, by Markov's inequality,
$$\Pr_{j \in [n]}[g(\{1\},\{j\})\textrm{ is not }(1/5,\error^{1/7})\textrm{-almost Boolean}] \leq \frac{\error^{5/7}}{1/5} = 5\error^{5/7}.$$
The lemma follows from a union bound.
\end{proof}

We can now prove the analogue of \autoref{lem:large-entries}.
{
\renewcommand{\thelemma}{\ref{lem:large-entries}'}
\addtocounter{lemma}{-1}
\begin{lemma} \label{lem:large-entries:general}
 The number of $\tau_{1i}$ which are $51\error^{1/7}$-close to~$1$ is $O(\error^{1/7})n$-close to $cn$.
\end{lemma}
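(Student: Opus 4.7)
The proof combines the trichotomy from Lemma~\ref{lem:t-rough:general} (each reasonable $\tau_{1j}$ is close to $\{0,1,\gamma\}$) with the row-sum identity
\[
\sum_{j=1}^n \tau_{1j} = cn,
\]
which follows from (\ref{eq:t-formula:general}) together with $\sum_j a_{1j}=2c-1$ (Lemma~\ref{lem:a:general}); the third ingredient is the dichotomy supplied by Lemma~\ref{lem:middle-value}. Lemma~\ref{lem:reasonable} already tells us that all but $O(\error^{1/7})n$ indices are reasonable, so unreasonable contributions to either $N_1$ or to the row sum are absorbed into the $O(\error^{1/7})n$ slack of the conclusion.

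I would split into two regimes. In the \emph{separated} regime, where $\gamma$ is $156\error^{1/7}$-far from $\{0,1\}$, Lemma~\ref{lem:middle-value} kicks in: either every reasonable $\tau_{1j}$ is $26\error^{1/7}$-close to $\gamma$, or none is. The ``every'' alternative would force $cn = \gamma n + O(\error^{1/7})n$, i.e.\ $|c-\gamma|=O(\error^{1/7})$; but $|c-\gamma| \in \{c, 1-c\} \geq \eta$, and by~\eqref{eq:e-assumption:general} we have $\eta \geq c_0^{-1/7}\error^{1/7}$, so choosing $c_0$ small this is a contradiction. Hence we are in the ``none'' alternative, and every reasonable $\tau_{1j}$ is $26\error^{1/7}$-close to $\{0,1\}$. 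Letting $N_0', N_1'$ be the respective counts among reasonable indices, we get $N_0'+N_1' = (1-O(\error^{1/7}))n$ and from the row sum $N_1' = cn + O(\error^{1/7})n$; every such $j$ is automatically $51\error^{1/7}$-close to $1$, and any further contribution to $N_1$ comes only from unreasonable indices (at most $O(\error^{1/7})n$), giving the claim.

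In the \emph{merged} regime, where $\gamma$ lies within $156\error^{1/7}$ of some $b\in\{0,1\}$, I first observe that taking $c_0$ small in~\eqref{eq:e-assumption:general} ensures $\eta \gg \error^{1/7}$, which rules out $\gamma$ being close to the ``wrong'' endpoint: $\gamma = 2c$ (when $c<1/2$) is automatically $\gg\error^{1/7}$ away from $0$, and $\gamma = 2c-1$ (when $c>1/2$) is automatically $\gg\error^{1/7}$ away from $1$. So $b$ is forced to be $1$ when $c<1/2$ (whence $c$ is close to $1/2$) and $0$ when $c>1/2$ (same). The $\gamma$-cluster of reasonable values and the $b$-cluster are thus both concentrated near a single point, and the trichotomy collapses to a dichotomy near $\{0,1\}$. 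Writing the row-sum identity as
\[
cn = N_1' + \gamma\,N_\gamma + O(\error^{1/7})n,
\]
together with $N_0' + N_1' + N_\gamma = n - O(\error^{1/7})n$ and $\gamma = b + O(\error^{1/7})$ (at the scale relevant here, after using $c\approx 1/2$), one solves to get $N_1' + N_\gamma = cn + O(\error^{1/7})n$ when $b=1$; since every reasonable $j$ with $\tau_{1j}$ near $1$ or near $\gamma$ is within $51\error^{1/7}$ of $1$ after the merge, this count is exactly what enters~$N_1$.

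\textbf{Main obstacle.} The delicate point is the sub-regime where $\gamma$ is $156\error^{1/7}$-close to $b$ but not $25\error^{1/7}$-close, so that entries $26\error^{1/7}$-close to $\gamma$ need not be $51\error^{1/7}$-close to $b$. Here Lemma~\ref{lem:middle-value} does not apply and the merging is imperfect. I would handle this by exploiting that, in this window, $c$ must be very close to $1/2$ (specifically $|c-1/2| = O(\error^{1/7})$), so that the ``canonical example'' is essentially the balanced one; the row sum then pins down $N_1$ without needing the full dichotomy, and the extra slack in the $O(\error^{1/7})n$ conclusion of the lemma absorbs the remaining imprecision.
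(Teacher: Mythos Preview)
Your separated-regime argument (when $\gamma$ is $156\error^{1/7}$-far from $\{0,1\}$) is the same as the paper's second case: \autoref{lem:middle-value} together with \autoref{lem:reasonable} force the all-$\gamma$ / none-$\gamma$ dichotomy among reasonable entries; the all-$\gamma$ alternative contradicts the row sum $\sum_j\tau_{1j}=cn$ (your observation $|c-\gamma|\in\{c,1-c\}\ge\eta$ is exactly the paper's computation, rephrased); then count $N_0',N_1'$.

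Your merged-regime argument is needlessly elaborate, and the ``main obstacle'' you flag is an artifact of your own route. The paper does not track $N_\gamma$ separately, does not determine which endpoint $b$ is, and does not invoke $c\approx 1/2$. Instead it simply enlarges the radius: since $\gamma$ is $156\error^{1/7}$-close to $\{0,1\}$ and every large entry is $26\error^{1/7}$-close to $\{0,1,\gamma\}$ (\autoref{lem:t-rough:general}), every large entry is automatically $182\error^{1/7}$-close to $\{0,1\}$. Defining $N_0,N_1$ with radius $182\error^{1/7}$, the counting argument of \autoref{lem:large-entries} then goes through verbatim with $cn$ in place of $n/2$. No sub-regime, no obstacle. (The paper is thus tacitly proving this case with the constant $182$ rather than $51$; this is immaterial for \autoref{cor:main:general}.)

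As written, your proposed resolution of the obstacle has a genuine gap. In the sub-regime where, say, $\gamma$ lies $100\error^{1/7}$ from $1$, write $M_1,M_\gamma$ for the numbers of reasonable $\tau_{1j}$ that are $26\error^{1/7}$-close to $1$ and to $\gamma$ (but not to $1$), respectively. The row sum gives $M_1+\gamma M_\gamma=cn+O(\error^{1/7})n$, hence $M_1+M_\gamma=cn+O(\error^{1/7})n$; but the quantity $N_1$ you want (radius $51\error^{1/7}$) lies somewhere in the interval $[M_1,\,M_1+M_\gamma]$, and nothing in your sketch bounds $M_\gamma$ from above. Knowing $c\approx 1/2$ does not close this: it is entirely consistent with the row sum that, e.g., $M_1=0$ and $M_\gamma\approx cn$, in which case $N_1$ could be $0$. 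The paper's radius-enlargement trick sidesteps this completely.
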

}
\begin{proof}
 There are two cases, depending on whether $\gamma$ is $156\error^{1/7}$-close to $\{0,1\}$ or not.

Suppose first that $\gamma$ is $156\error^{1/7}$-close to $\{0,1\}$. Let $N_0$ be the number of $\tau_{1i}$ which are $182\error^{1/7}$-close to~$0$, and let $N_1$ be the number of $\tau_{1i}$ which are $182\error^{1/7}$-close to~$1$. \autoref{lem:a:general} shows that
 \[ T := \sum_{i=1}^n \tau_{1i} = cn. \]
 On the other hand,
 \[ (1-182\error^{1/7}) N_1 \leq T \leq 182\error^{1/7} N_0 + (n - N_0) = n - (1-182\error^{1/7}) N_0. \]
 Substituting in $T=cn$ gives:
 \begin{align*}
  N_0 &\leq (1+O(\error^{1/7})) (1-c)n, \\
  N_1 &\leq (1+O(\error^{1/7})) cn.
 \end{align*}
 \autoref{cor:strong} and \autoref{lem:t-rough:general} together show that $N_0 + N_1 = (1-O(\error^{1/7})) n$, and so
 $$ N_1 \geq (1-O(\error^{1/7}))n - N_0 \geq (c-O(\error^{1/7}))n.$$
 This completes the proof when $\gamma$ is $156\error^{1/7}$-close to $\{0,1\}$.

Suppose next that $\gamma$ is $156\error^{1/7}$-far from $\{0,1\}$, so that \autoref{lem:middle-value} applies. By \autoref{lem:reasonable}, all but $O(\error^{1/7})n$ of the entries in the first row are reasonable. \autoref{lem:middle-value} implies that either all of the corresponding $\tau_{1i}$ are $26\error^{1/7}$-close to $\gamma$, or none are. In the latter case, they must be $26\error^{1/7}$-close to $\{0,1\}$, and so an argument similar to the preceding case proves the lemma. It remains to rule out the case that all reasonable $\tau_{1i}$ are $26\error^{1/7}$-close to $\gamma$.

Suppose for a contradiction that all the reasonable $\tau_{1i}$ are $26\error^{1/7}$-close to $\gamma$. Assume first that $c < 1/2$, so that $\gamma = 2c$. Since all but $O(\error^{1/7})n$ of the $\tau_{1i}$ are reasonable, we have
$$T=\sum_{i}\tau_{1i} \geq (1-O(\error^{1/7}))2cn.$$
This contradicts the equation $T = cn$ since $\error$ is small enough (not, here, depending on $c$).

Assume now that $c > 1/2$, so that $\gamma = 2c-1$. Then we have
$$(1-c)n = n-T= \sum_i (1-\tau_{1i}) \geq (1-O(\error^{1/7}))2(1-c)n,$$
a contradiction.
\end{proof}

Note that the assumption $\error^{1/7} = O(\eta)$ does not imply that $\gamma$ is $156\error^{1/7}$-far from $\{0,1\}$. Indeed, if $c$ is close to $1/2$ then $\gamma$ is close to $\{0,1\}$. Hence, it is necessary for us to split the proof of Lemma \ref{lem:large-entries:general} into the two cases above.

The analogue of \autoref{cor:main} now follows, just as before. We state it without any prior assumptions.

{
\renewcommand{\thecorollary}{\ref{cor:main}'}
\addtocounter{corollary}{-1}
\begin{corollary} \label{cor:main:general}
  Suppose that $n \geq 4$ and
 \begin{equation}
 \frac{1}{n^{7/3}} \leq \error \leq c_0 \eta^7, 
 \end{equation}
 where $c_0 > 0$ is an absolute constant. Let $\cF \subset S_n$ be a family of permutations with size $|\cF| = c \cdot n!$, satisfying
 \[ \EE[(f-f_1)^2] = \error, \]
where $f = 2\chi_\cF-1$, and $f_1$ is the orthogonal projection of $f$ onto $U_1$. Then there exists a family $\cG \subset S_n$ which is a union of $\lfloor cn \rfloor$ disjoint 1-cosets, satisfying
 \[ |\cG \triangle \cF| \leq O(\error^{1/7}) n!. \]
\end{corollary}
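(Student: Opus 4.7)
The plan is to follow the proof of \autoref{cor:main} almost verbatim, substituting each lemma by its generalized counterpart from \autoref{sec:general-case}. After relabeling, we may assume by the updated version of \autoref{cor:strong} that row~$1$ is the $O(\error^{1/7})$-strong line, and \autoref{lem:large-entries:general} then tells us that the number of indices $i \in [n]$ for which $\tau_{1i}$ is $O(\error^{1/7})$-close to $1$ differs from $cn$ by at most $O(\error^{1/7})n$. This is precisely the structural information needed to extract a good approximating family of disjoint $1$-cosets.

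Concretely, I would set
\[
 S = \{\, i \in [n] : \tau_{1i} \geq 1 - 51\error^{1/7} \,\} \quad \text{and} \quad \cG = \bigcup_{i \in S} T_{1i}.
\]
Because the $T_{1i}$ are pairwise disjoint (distinct images of $1$), $\cG$ is automatically a disjoint union of $|S|$ cosets of the form $T_{1i}$, so $d := |S|/n$ is $O(\error^{1/7})$-close to $c$ by \autoref{lem:large-entries:general}, and $|\cG| = |S|(n-1)! = (c + O(\error^{1/7}))\,n!$. For each $i \in S$, the definition of $\tau_{1i}$ gives $|\cF \cap T_{1i}| \geq (1 - 51\error^{1/7})(n-1)!$, whence
\[
 |\cF \cap \cG| = \sum_{i \in S} |\cF \cap T_{1i}| \geq (1 - O(\error^{1/7}))\,|S|\,(n-1)! = (c - O(\error^{1/7}))\,n!.
\]
Combining this with $|\cF| = cn!$ and the bound on $|\cG|$ above yields
\[
 |\cF \triangle \cG| = |\cF| + |\cG| - 2|\cF \cap \cG| \leq cn! + (c + O(\error^{1/7}))n! - 2(c - O(\error^{1/7}))n! = O(\error^{1/7})\,n!,
\]
which is exactly the claimed bound.

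There is essentially no new obstacle at this stage of the argument: the entire conceptual difficulty of the general-$c$ case has already been absorbed into the earlier lemmas of \autoref{sec:general-case}, and in particular into \autoref{lem:middle-value}, which rules out the scenario in which many of the large entries $a_{1i}$ give rise to $\tau_{1i}$ clustering near the `medium' value $\gamma \in \{2c,\,2c-1\}$ rather than near $\{0,1\}$. With that dichotomy resolved and with the row-sum identity $\sum_i \tau_{1i} = cn$ from \autoref{lem:a:general} in hand, the culmination is a short double-counting argument mirroring the $c=1/2$ proof. The only care needed is to keep the threshold defining $S$ consistent with the $51\error^{1/7}$ margin supplied by \autoref{lem:large-entries:general}, and to remember that the assumption $\error \leq c_0 \eta^7$ is what guarantees that all of the $c$-dependent hypotheses (notably~\eqref{eq:errub:decomposition} and~\eqref{eq:errub:partition}) used earlier in the section are actually in force.
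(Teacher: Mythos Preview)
Your proposal is correct and follows essentially the same approach as the paper: the paper itself does not spell out a proof of \autoref{cor:main:general}, merely stating that it ``follows, just as before'' from \autoref{lem:large-entries:general}, and you have accurately reconstructed the details by mimicking the proof of \autoref{cor:main} with the threshold updated to $51\error^{1/7}$ and $c$ in place of $1/2$.
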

}

\paragraph{Getting rid of the assumptions on $\mathbf{\error}$} \autoref{cor:main:general} has a drawback: it needs to assume that $\error$ is not too small and not too large. When $\error$ is large enough (depending on $\eta$), the statement holds trivially, so we may focus our attention on the case where $\error$ is small. Intuitively, having $\error$ small should work in our favor. We shall introduce a few artificial errors to increase $\error$, and then later on take account of them, by introducing an extra error term into our conclusion statement.

We start by showing how to artificially increase $\error$.

\begin{lemma} \label{lem:error-introduction}
 Let $\cF \subset S_n$, and let $\upsilon \leq 1/16$. Then there exists a family $\cH \subset S_n$ such that
 \[
 |\cF \triangle \cH| \leq \sqrt{\upsilon} n! \quad \text{and}
 \quad \upsilon \leq \EE[(h-h_1)^2] \leq (\sqrt{\EE[(f-f_1)^2]} + 2\sqrt{\upsilon})^2,
 \]
where $f = 2\chi_\cF - 1$, $h = 2\chi_\cH - 1$ and $f_1,h_1$ are the projections of $f,h$ into $U_1$.

Moreover, if $|\cF| \geq n!/2$ then $\cH \subset \cF$, and otherwise $\cH \supset \cF$.
\end{lemma}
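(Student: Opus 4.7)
The plan is to reduce immediately to the case $|\cF|\ge n!/2$ and construct $\cH\subset\cF$ by \emph{removing} elements; the opposite case is handled symmetrically by adding elements from $S_n\setminus\cF$ (we do not even need to complement $\cF$, since the two ``adds'' vs.\ ``removes'' arguments are identical modulo a sign). Next, dispose of the trivial subcase $\error_f:=\EE[(f-f_1)^2]\ge\upsilon$ by taking $\cH=\cF$: the lower bound is immediate, the upper bound reduces to $\sqrt{\error_f}\le\sqrt{\error_f}+2\sqrt\upsilon$, and $|\cF\triangle\cH|=0$. So henceforth assume $\error_f<\upsilon$.

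Construct $\cH$ by greedy incremental removal. Set $\cH_0=\cF$, and write $h^{(k)}=2\chi_{\cH_k}-1$, $e_k=\EE[(h^{(k)}-h_1^{(k)})^2]$. At each step, while $e_k<\upsilon$, pick $\sigma_{k+1}\in\cH_k$ that maximises $e_{k+1}$ and set $\cH_{k+1}=\cH_k\setminus\{\sigma_{k+1}\}$; stop at the first $K$ with $e_K\ge\upsilon$ and take $\cH:=\cH_K$. The lower bound on $\EE[(h-h_1)^2]$ then holds by construction. For the upper bound, since $h^{(K)}_1$ is the closest point of $U_1$ to $h^{(K)}$, the triangle inequality gives
\[
\sqrt{e_K}\le\sqrt{\error_f}+\|h^{(K)}-f\|_2=\sqrt{\error_f}+2\sqrt{K/n!},
\]
so the whole argument reduces to bounding the stopping time $K$.

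The key computation is the expected step increment. Writing $h^{(k+1)}=h^{(k)}-2\delta_{\sigma}$ (where $\delta_\sigma$ is the $\{0,1\}$-indicator of $\{\sigma\}$) and expanding, and using that $h^{(k)}-h^{(k)}_1\in U_1^\perp$ kills the projection part of $\delta_\sigma$, one gets
\[
e_{k+1}-e_k=-\tfrac{4}{n!}\bigl(h^{(k)}-h^{(k)}_1\bigr)(\sigma)+4c_0,
\]
where $c_0:=\|\delta_\sigma-(\delta_\sigma)_1\|_2^2$ is independent of $\sigma$, because the projection onto $U_1$ commutes with left translation on $S_n$; an orthonormal-basis computation gives $c_0=\tfrac{1}{n!}-\tfrac{\dim U_1}{(n!)^2}=\tfrac{1}{n!}-O(n^2/(n!)^2)$. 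Averaging over $\sigma\in\cH_k$ and using the value of $\sum_{\sigma\in\cH_k}h_1^{(k)}(\sigma)$ (obtained from $\|h_1^{(k)}\|^2=1-e_k$ and $\EE[h^{(k)}]=2|\cH_k|/n!-1$) yields expected increment $4c_0-2e_k/|\cH_k|$. So long as $|\cH_k|\ge n!/4$ and $e_k<\upsilon\le 1/16$, this expectation is at least $\Omega(1/n!)$, hence some $\sigma_{k+1}$ realises $e_{k+1}-e_k=\Omega(1/n!)$. Consequently $K=O(\upsilon n!)$, which is well within the allotted budget (since $\upsilon n!\le\sqrt\upsilon\cdot n!$); plugging back into the triangle bound gives $\sqrt{e_K}\le\sqrt{\error_f}+O(\sqrt\upsilon)$.

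The main obstacle is the closed-form expected-increment computation and a careful tracking of constants so as to obtain exactly $2\sqrt\upsilon$ in the upper estimate. A minor verification is that $|\cH_k|$ remains above $n!/4$ throughout the process, which follows automatically since we remove at most $K\ll n!$ elements from a set of size $\ge n!/2$. The greedy choice of $\sigma_{k+1}$ may be replaced by any $\sigma_{k+1}$ attaining at least the average increment, which streamlines the argument.
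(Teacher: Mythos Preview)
Your greedy-removal argument is essentially sound and proves a slightly weaker version of the lemma (with $C\sqrt{\upsilon}$ in place of $2\sqrt{\upsilon}$ for some absolute constant $C$), which is all that is needed downstream. The increment computation and the averaging over $\sigma\in\cH_k$ are correct. However, the route is genuinely different from the paper's, and it is worth knowing what the paper does, because it is both shorter and yields the stated constant with no work.

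The paper exploits a single explicit unit vector in $U_1^\perp$: the sign character $\sgn$. Since $\sgn\perp U_1$ for $n\ge 3$, one has $\EE[(f-f_1)^2]\ge\langle f,\sgn\rangle^2$. Assuming $|\cF|\ge n!/2$ and (say) at least half of $\cF$ is even, one removes exactly $\sqrt{\upsilon}\,n!$ even permutations from $\cF$ (there are at least $n!/4\ge\sqrt{\upsilon}\,n!$ of them, by $\upsilon\le 1/16$). This shifts $\langle f,\sgn\rangle$ by exactly $-2\sqrt{\upsilon}$, so either $\langle f,\sgn\rangle\ge\sqrt{\upsilon}$ already (take $\cH=\cF$), or after the removal $\langle g,\sgn\rangle\le-\sqrt{\upsilon}$ (take $\cH=\cG$). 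Either way the lower bound $\EE[(h-h_1)^2]\ge\upsilon$ drops out, and the upper bound is the same contraction/triangle-inequality step you use. No iteration, no constants to track.

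Your approach has the merit of generality: it uses nothing about $S_n$ beyond $U_1$ being a translation-invariant subspace with $\dim U_1=o(n!)$, so it would transfer to other groups where no convenient ``sign'' vector is available. The price is that the stopping-time bound $K\le 1+\upsilon n!/c$ always carries the ``$+1$'', and this genuinely prevents $K\le\upsilon n!$ in general: for instance when $\EE[(f-f_1)^2]=0$, a single removal already gives $e_1=4c_0\approx 4/n!$, which overshoots the required upper bound $4\upsilon$ whenever $\upsilon\ll 1/n!$. So the ``careful tracking of constants'' you flag as the main obstacle cannot in fact recover the sharp $2\sqrt{\upsilon}$ by this method; you should state your conclusion with an unspecified absolute constant and observe that this suffices for the application in Theorem~\ref{thm:main}.
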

\begin{proof}
 By taking complements if necessary, we may assume that $|\cF| \geq n!/2$. Let $\sgn(\pi)$ denote the sign of a permutation $\pi$. Since $n \geq 3$, the sign function is orthogonal to $U_1$ (this is because the sign representation is not a constituent of the permutation representation), and so
 \begin{equation} \label{eq:error-lb}
 \EE[(f-f_1)^2] \geq \langle f,\sgn \rangle^2 = \left( \EE_\pi \sgn(\pi) f(\pi) \right)^2.
 \end{equation}
 First, assume that at least half of the permutations in $\cF$ are even. Then the number of these is at least $n!/4 \geq \sqrt{\upsilon} n!$. Define $\cG$ (and so $g$) by removing $\sqrt{\upsilon}n!$ of them. We have
 \[
  \EE_\pi \sgn(\pi) g(\pi) = \EE_\pi \sgn(\pi) f(\pi) - 2\sqrt{\upsilon}.
 \]
 Therefore, either $\EE_\pi \sgn(\pi) f(\pi) \geq \sqrt{\upsilon}$, or $\EE_\pi \sgn(\pi) g(\pi) \leq -\sqrt{\upsilon}$. In the former case, we take $\mathcal{H} = \mathcal{F}$, and we are done by the inequality~\eqref{eq:error-lb}. In the latter case, we take $\mathcal{H}=\mathcal{G}$. The inequality~\eqref{eq:error-lb} shows that $\EE[(h-h_1)^2] \geq \upsilon$. Moreover, since projections are contracting, we have
 \[
 \|h-h_1\|_2 \leq \|f-f_1\|_2 + \|(h-h_1)+(f-f_1)\|_2  \leq \|f-f_1\|_2 + \|h-f\|_2 \leq \|f-f_1\|_2 + 2\sqrt{\upsilon}.
 \]
 Similarly, if at least half of the permutations in $\cF$ are odd, then the number of these is at least $n!/4 \geq \sqrt{\upsilon} n!$. Define $\cG$ (and so $g$) by removing $\sqrt{\upsilon}n!$ of them. We have
 \[
  \EE_\pi \sgn(\pi) g(\pi) = \EE_\pi \sgn(\pi) f(\pi) + 2\sqrt{\upsilon}.
 \]
 Therefore, either $\EE_\pi \sgn(\pi) f(\pi) \leq -\sqrt{\upsilon}$, or $\EE_\pi \sgn(\pi) g(\pi) \geq \sqrt{\upsilon}$, so we may continue as before.
\end{proof}

Using this trick and \autoref{cor:main:general}, we get our main theorem in full generality.

\maintheorem*
\begin{proof}
 If $n < 4$ then the theorem is trivial (by taking the absolute constants implied by the $O$-terms to be sufficiently large), so we may assume that $n \geq 4$. If $\error$ satisfies~\eqref{eq:e-assumption:general}, then the theorem follows directly from \autoref{cor:main:general}. Otherwise, there are two cases: $\error$ is too large, and $\error$ is too small. If $\error > c_0\eta^7$ then the theorem holds, since $\error^{1/7}/\eta > c_0^{1/7}$, so suppose $\error < n^{-7/3}$.

Apply \autoref{lem:error-introduction} with $\upsilon = n^{-7/3}$ to obtain a family $\cH$. The value $\errorh = \EE[(h-h_1)]^2$ satisfies
\[
 \frac{1}{n^{7/3}} \leq \errorh \leq \left(\frac{1}{n^{7/6}} + \frac{2}{n^{7/6}}\right)^2 = \frac{9}{n^{7/3}}.
\]
Moreover, $|\cF \triangle \cH|/n! \leq n^{-7/6}$ and so $c_2 := \EE[\cH]$ satisfies $|c-c_2| \leq n^{-7/6}$. 
Also, $\eta_2 := \min(c_2,1-c_2)$ satisfies $|\eta-\eta_2| \leq n^{-7/6}$ as well.

There are two cases: either $\errorh > c_0\eta_2^7$, or not. In the first case, $9/n^{1/3} > c_0^{1/7}\eta_2$, and so $\eta_2 = O(n^{-1/3})$. Hence $\eta = O(n^{-1/3})$ and so the theorem holds, since $n^{-1/3}/\eta = \Omega(1)$.

The more interesting case is when $\errorh < c_0\eta_2^7$. \autoref{cor:main:general} applies, and we get a family $\cG \subset S_n$ which is the union of $\lfloor c_2n \rfloor = \lfloor c n \rfloor$ disjoint 1-cosets, where
 \[|\cG \triangle \cH| \leq O(\errorh^{1/7}) n!. \]
 Since $\errorh^{1/7} = O(n^{-1/3})$ and $|\cF \triangle \cH| \leq n^{-7/6} \cdot n!$, we have
$$|\cG \triangle \cF| \leq |\cG \triangle \cH|+ |\cH \triangle \cF| \leq O(\errorh^{1/7}) n! + n^{-7/6} \cdot n! \leq O(n^{-1/3})n!,$$
completing the proof of the theorem.
\end{proof}

\begin{remark} When $\error > c_0\eta^7$, the error terms $\error^{1/7}/\eta$ ensures that the theorem holds. Other error terms have the same effect, and so other versions of the theorem are possible. For example, instead of $\error^{1/7}/\eta$, we could have $\error^{1/7} + (\error^{1/7}/\eta)^2$.
\end{remark}

\section{Almost extremal isoperimetric sets in the transposition graph}\label{sec4}
As explained in the introduction, the main reason for developing Fourier-theoretic stability results, such as the main theorem of this paper, is for applications in extremal combinatorics. Oftentimes, one must struggle to translate the combinatorial information in an extremal problem to the Fourier language, but there is one setting in which the translation is almost immediate (yet may demand certain non-trivial calculations.) That is the setting of normal Cayley graphs on groups, and characterization of the maximum-sized independent sets, or the sets of minimum edge-expansion, in those graphs. See \cite{EFF1} for a more complete description of this. In a nutshell, there are good characterizations relating edge-expansion in graphs to the eigenvalues and eigenvectors of the graph, namely, the theorems of Alon-Milman \cite{AlonMilman} and Dodziuk \cite{Dodziuk}. A Cayley graph whose generating set is closed under conjugation is known as a {\em normal} Cayley graph. For any normal Cayley graph on a group $\Gamma$, its eigenspaces are precisely the {\em isotypical subspaces of $\mathbb{C}^\Gamma$} (the subspaces consisting of functions whose Fourier transform is concentrated on a fixed irreducible representation of $\Gamma$). Furthermore, the eigenvalues are given by a formula involving the average of the character of the corresponding representation on the generating set of the graph.

The example of the above phenomenon which we have in mind is the application of the Alon-Milman/Dodziuk theorems to the Cayley graph on $S_n$ generated by the transpositions. In other words, the graph $G$ with $V(G) = S_n$, and
$$E(G) = \{ \{\sigma,\tau\} : \sigma \tau^{-1}\ \mbox{is a transposition} \}$$
 --- two permutations are joined if they differ by a transposition. For any set $\mathcal{A} \subset V(G)$, we let $\partial A$ denote the edge-boundary of $\mathcal{A}$, i.e. the set of edges between $\mathcal{A}$ and its complement. As explained in \cite{EFF1},
by using Dodziuk/Alon-Milman, the work of Diaconis and Shashahani \cite{diaconis} yields the following theorem:
\begin{theorem}[Diaconis and Shashahani]
\label{thm:diaconis}
Let $\mathcal{A} \subset S_n$ with $|\mathcal{A}| = cn!$. Then
\begin{equation}\label{eq:diaconisbound}
| \partial \mathcal{A}| \geq (1-c)n |\mathcal{A}|, 
\end{equation}
with equality if and only if the characteristic vector of $\mathcal{A}$ belongs to $U_1$.
\end{theorem}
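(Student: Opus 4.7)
The plan is to apply the standard spectral edge-isoperimetric bound (the Alon--Milman/Dodziuk inequality) to the transposition Cayley graph $G$, after computing its second-largest eigenvalue by the character formula for normal Cayley graphs.

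First I would establish the spectral inequality. Let $d = \binom{n}{2}$ be the valency of $G$ and let $A$ be its adjacency operator, viewed as a self-adjoint operator on $\RR^{S_n}$ with the uniform inner product. Writing $c = |\mathcal{A}|/n!$ and decomposing $\chi_\mathcal{A} = c + \phi$ with $\phi$ orthogonal to the constants, a direct calculation (using $d|\mathcal{A}| = 2\,e(\mathcal{A}) + |\partial \mathcal{A}|$) gives
\[
|\partial \mathcal{A}| \;=\; n!\,\langle \chi_\mathcal{A},(dI-A)\chi_\mathcal{A}\rangle \;=\; n!\,\langle \phi,(dI-A)\phi\rangle.
\]
Since $\phi\perp\mathbf{1}$ and $\|\phi\|_2^2 = c(1-c)$, bounding $(dI-A)$ from below by $(d-\lambda_2)$ on the orthogonal complement of the constants yields
\[
|\partial \mathcal{A}| \;\geq\; n!\,(d-\lambda_2)\,c(1-c),
\]
with equality if and only if $\phi$ is supported on the $\lambda_2$-eigenspace of $A$.

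Next, since the set $\mathcal{T}$ of transpositions is a conjugacy class, $G$ is a normal Cayley graph; by Schur's lemma, $A$ acts as a scalar on each isotypic subspace of $\RR^{S_n}$, and on the isotypic component of an irrep $\rho$ with character $\chi_\rho$ this scalar is $|\mathcal{T}|\,\chi_\rho(\tau)/\chi_\rho(1)$ for any $\tau\in\mathcal{T}$. The trivial representation contributes the top eigenvalue $d = \binom{n}{2}$; the standard $(n{-}1)$-dimensional representation (which together with the trivial makes up exactly $U_1$) has $\chi_\rho(\tau) = n-3$, contributing the eigenvalue $\binom{n}{2}(n-3)/(n-1)$. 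The Diaconis--Shahshahani computation (alternatively, Frobenius' content formula) shows that for every other irreducible $\rho$, $|\chi_\rho(\tau)|/\chi_\rho(1) < (n-3)/(n-1)$, so
\[
\lambda_2 \;=\; \binom{n}{2}\cdot \frac{n-3}{n-1}, \qquad d - \lambda_2 \;=\; \binom{n}{2}\cdot \frac{2}{n-1} \;=\; n,
\]
and the $\lambda_2$-eigenspace is precisely the isotypic component of the standard representation.

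Substituting $d-\lambda_2 = n$ into the spectral bound yields $|\partial \mathcal{A}| \geq n!\,n\,c(1-c) = (1-c)n\,|\mathcal{A}|$, proving~\eqref{eq:diaconisbound}. In the equality case, $\phi$ must live entirely in the standard isotypic, so $\chi_\mathcal{A} = c + \phi$ lies in the direct sum of the trivial and standard isotypics, which is exactly $U_1$; conversely, any $\chi_\mathcal{A}\in U_1$ has $\phi$ supported on the $\lambda_2$-eigenspace, saturating the bound. The main obstacle is verifying that the standard representation strictly dominates every other non-trivial irrep in the ratio $\chi_\rho(\tau)/\chi_\rho(1)$; this is the content of Diaconis--Shahshahani and is the only place where non-formal input beyond the general normal-Cayley-graph framework enters.
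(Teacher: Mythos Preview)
Your approach matches the paper's: the paper does not give a self-contained proof of this theorem but simply states that it follows by combining the Alon--Milman/Dodziuk spectral isoperimetric inequality with the Diaconis--Shahshahani eigenvalue computation for the transposition Cayley graph (referring to \cite{EFF1} for details). Your proposal spells out exactly this argument, so the methods coincide.

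One small slip: the claim that $|\chi_\rho(\tau)|/\chi_\rho(1) < (n-3)/(n-1)$ for every non-trivial, non-standard irreducible $\rho$ is false as stated --- the sign representation has $\chi_\rho(\tau)/\chi_\rho(1) = -1$, so the absolute value equals $1$. What you actually need (and what Diaconis--Shahshahani establish) is the one-sided bound $\chi_\rho(\tau)/\chi_\rho(1) < (n-3)/(n-1)$ for all non-trivial $\rho$ other than the standard representation; this is precisely what gives $\lambda_2 = \binom{n}{2}(n-3)/(n-1)$ with the $\lambda_2$-eigenspace equal to the standard isotypic. Drop the absolute value and the argument is correct.
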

The characterization of Boolean functions in $U_1$ given in \cite{EFP} immediately yields the following characterization of the extremal isoperimetric sets.
\begin{corollary}
Let $\mathcal{A} \subset S_n$, with $|\mathcal{A}| = cn!$, and $| \partial \mathcal{A} | = (1-c)n |\mathcal{A}|$. Then $\mathcal{A}$ is a dictatorship.
\end{corollary}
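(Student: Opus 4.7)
The plan is to combine the two ingredients already assembled in the background section and then add a short combinatorial observation about disjoint 1-cosets.

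First, I invoke Theorem~\ref{thm:diaconis}: the hypothesis $|\partial\mathcal{A}|=(1-c)n|\mathcal{A}|$ means that equality holds in~\eqref{eq:diaconisbound}, so $\chi_{\mathcal{A}}\in U_1$. Next, I apply the Ellis--Friedgut--Pilpel theorem with $t=1$: since $\chi_\mathcal{A}\colon S_n\to\{0,1\}$ lies in $U_1$, it is $1$-controlled, i.e.\ there is a finite index set $S\subset [n]\times[n]$ such that
\[
\mathcal{A}=\bigsqcup_{(i,j)\in S}T_{ij},
\]
a \emph{disjoint} union of 1-cosets.

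The remaining task is to show that this forces $S$ to be contained in a single row $\{i\}\times[n]$ or a single column $[n]\times\{j\}$, which by definition means $\mathcal{A}$ is a dictatorship. This rests on the elementary fact that $T_{i_1j_1}\cap T_{i_2 j_2}=\varnothing$ if and only if $i_1=i_2$ and $j_1\neq j_2$, or $j_1=j_2$ and $i_1\neq i_2$ (otherwise one can easily build a permutation sending $i_1\mapsto j_1$ and $i_2\mapsto j_2$). For $|S|\le 2$ the conclusion is immediate from this dichotomy. For $|S|\ge 3$, pick three pairs $(i_1,j_1),(i_2,j_2),(i_3,j_3)\in S$; after relabelling we may assume $i_1=i_2=:i$ and $j_1\neq j_2$; then $(i_3,j_3)$ must be disjoint from both, which forces $i_3=i$ (if $i_3\neq i$, then we would need simultaneously $j_3=j_1$ and $j_3=j_2$, impossible). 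An obvious induction extends this to every element of $S$, showing that $S$ lies in a single row (or, by symmetry, a single column).

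There is no real obstacle here: all the heavy lifting is done by Theorem~\ref{thm:diaconis} and the EFP characterization of Boolean functions in $U_1$, and the combinatorial step above is a short direct argument. The corollary therefore follows immediately.
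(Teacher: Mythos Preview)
Your proof is correct and follows exactly the route the paper takes: equality in Theorem~\ref{thm:diaconis} forces $\chi_{\mathcal{A}}\in U_1$, and then the Ellis--Friedgut--Pilpel theorem for $t=1$ gives that $\mathcal{A}$ is a disjoint union of 1-cosets, i.e.\ a dictatorship. The only difference is cosmetic: the paper absorbs your combinatorial observation (that pairwise-disjoint 1-cosets must all share a row index or all share a column index) into the \emph{definition} of dictatorship in \autoref{sec:background}, remarking there that a Boolean sum of $T_{ij}$'s is automatically determined by the image or pre-image of a single element, whereas you spell out this short argument explicitly.
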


We now want a stability version of this. In \cite{EFF1}, Lemma 13, we prove a stability version of Dodziuk/Alon-Milman, which, when combined with the eigenvalue estimates in \cite{diaconis}, shows that any set which has edge-boundary close to the minimum, must have its characteristic vector very close (in $L^2$ norm) to $U_1$:

\begin{theorem}[Lemma 13 in \cite{EFF1}]
 Let $\mathcal{A} \subset S_n$ with $|\mathcal{A}| = cn!$. If
$$
 | \partial \mathcal{A}| \leq (1-c+\delta_0)n |\mathcal{A}|,
$$
then 
$$
E[(f-f_1)^2] \leq \frac{n}{n-2}c\delta_0,
$$
where $f$ is the characteristic vector of $\mathcal{A}$, and $f_1$ is its projection on $U_1$.
\end{theorem}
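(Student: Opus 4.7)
The plan is to translate the isoperimetric hypothesis into a spectral inequality on the transposition Cayley graph and then read off the $L^2$ mass of $f$ lying outside $U_1$. Since the graph is $\binom{n}{2}$-regular and $f = \chi_{\mathcal{A}}$ is $\{0,1\}$-valued, the standard identity gives
\[
 |\partial \mathcal{A}| \;=\; \sum_{\{\sigma,\tau\}\in E}(f(\sigma)-f(\tau))^2 \;=\; n!\,\langle f, Lf\rangle,
\]
where $L = \binom{n}{2}I - A$ is the combinatorial Laplacian and $\langle\cdot,\cdot\rangle$ is the uniform inner product of \S\ref{sec:definitions}. The hypothesis therefore reads $\langle f, Lf\rangle \leq (1-c+\delta_0)nc$.

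The transposition graph is a normal Cayley graph, so $L$ acts as a scalar $\mu_\lambda$ on each isotypic component $V_\lambda$ of $\mathbb{R}^{S_n}$, with $\mu_\lambda = \binom{n}{2}\bigl(1 - \chi^\lambda(\tau)/d_\lambda\bigr)$ for any transposition $\tau$. Invoking the Diaconis-Shahshahani formula
$\chi^\lambda(\tau)/d_\lambda = \tfrac{1}{\binom{n}{2}}\bigl[\sum_i \binom{\lambda_i}{2} - \sum_j \binom{\lambda'_j}{2}\bigr]$, a short case analysis over partitions gives
\[
 \mu_{(n)} = 0, \quad \mu_{(n-1,1)} = n, \quad \mu_{(n-2,2)} = 2(n-1),
\]
and $\mu_\lambda \geq 2(n-1)$ for every $\lambda \neq (n),(n-1,1)$, the minimum outside $U_1$ being attained at $(n-2,2)$.

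Decompose $f = c + f_{\mathrm{std}} + (f-f_1)$ according to $V_{(n)} \oplus V_{(n-1,1)} \oplus U_1^\perp$. Since $\|f\|_2^2 = c$ and the three summands are mutually orthogonal, $\|f_{\mathrm{std}}\|_2^2 = c(1-c) - \|f-f_1\|_2^2$. The eigenvalue bounds therefore give
\[
 \langle f, Lf\rangle \;\geq\; n\,\|f_{\mathrm{std}}\|_2^2 + 2(n-1)\|f-f_1\|_2^2 \;=\; nc(1-c) + (n-2)\|f-f_1\|_2^2.
\]
Combining with $\langle f, Lf\rangle \leq nc(1-c) + nc\delta_0$ yields $(n-2)\|f-f_1\|_2^2 \leq nc\delta_0$, i.e., $\mathbb{E}[(f-f_1)^2] \leq \tfrac{n}{n-2} c\delta_0$, as required.

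The only genuinely delicate step is the spectral comparison: the factor $n/(n-2)$ in the conclusion equals $n/\bigl(2(n-1)-n\bigr)$, the reciprocal of the gap between the $U_1$ eigenvalue and the next smallest Laplacian eigenvalue. So one must verify, via the Diaconis-Shahshahani formula, that no partition outside $U_1$ yields a Laplacian eigenvalue below $2(n-1)$ — a routine monotonicity check on $\sum_i \binom{\lambda_i}{2} - \sum_j \binom{\lambda'_j}{2}$ over the relevant shapes $(n-2,2), (n-2,1,1), (n-3,3), (n-3,2,1), \ldots$
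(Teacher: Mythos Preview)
The paper does not actually prove this statement here; it is quoted from \cite{EFF1} (Lemma 13), and the surrounding text describes the method as a stability version of the Dodziuk/Alon--Milman inequality combined with the Diaconis--Shahshahani eigenvalue computations for the transposition Cayley graph. Your argument is precisely this: you write $|\partial\mathcal{A}|$ as the Laplacian quadratic form $n!\,\langle f,Lf\rangle$, decompose $f$ along the isotypic components, and exploit the gap between the Laplacian eigenvalue $n$ on $V_{(n-1,1)}$ and the minimum eigenvalue $2(n-1)$ on $U_1^\perp$ to isolate $\|f-f_1\|_2^2$. The computation is correct, and the factor $n/(n-2)$ emerges exactly as you say, from the gap $2(n-1)-n$. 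The one point you rightly flag as needing care --- that $\mu_\lambda\geq 2(n-1)$ for every $\lambda\neq(n),(n-1,1)$, with equality at $(n-2,2)$ --- is the content of the Diaconis--Shahshahani spectral analysis and is indeed a finite check via the character-ratio formula.
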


Combining this with Theorem \ref{thm:main} immediately yields the following.
\begin{theorem}\label{thm:stableiso}
Let $\mathcal{A} \subset S_n$ with $|\mathcal{A}| = cn!$. If
$$
 | \partial \mathcal{A}| \leq (1-c+\delta_0)n |\mathcal{A}|
$$
then there exists a dictatorship $\mathcal{B} \subset S_n$ with 
$$
\frac{|\mathcal{A} \triangle \mathcal{B}|}{n!} = O\left(\frac{1}{c(1-c)} \left( (c\delta_0)^{1/7} + \frac{1}{n^{1/3}}\right)\right).
$$
\end{theorem}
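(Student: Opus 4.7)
The proof is a direct concatenation of the two preceding results --- Lemma~13 of \cite{EFF1} and Theorem~\ref{thm:main} --- together with a minor bookkeeping step relating $\eta = \min(c,1-c)$ to the factor $c(1-c)$ appearing in the statement. The plan is as follows.

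First, I would invoke Lemma~13 of \cite{EFF1} with the family $\mathcal{A}$. The hypothesis $|\partial\mathcal{A}| \leq (1-c+\delta_0)n|\mathcal{A}|$ yields $\EE[(\chi_\mathcal{A} - (\chi_\mathcal{A})_1)^2] \leq \tfrac{n}{n-2}c\delta_0$, where $(\chi_\mathcal{A})_1$ is the orthogonal projection of $\chi_\mathcal{A}$ onto $U_1$. Passing to the $\pm 1$-valued encoding $f = 2\chi_\mathcal{A} - 1$ multiplies this quantity by~$4$ (see the remark after Theorem~\ref{thm:main}), so
\[
\error := \EE[(f - f_1)^2] \;\leq\; \tfrac{4n}{n-2}\, c\delta_0 \;=\; O(c\delta_0).
\]

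Next, I would feed this $\error$ into Theorem~\ref{thm:main} to obtain a family $\mathcal{G} \subset S_n$ which is a union of $dn$ pairwise disjoint 1-cosets satisfying $|d-c| = O(\error^{1/7} + n^{-1/3})$ and $|\mathcal{G} \triangle \mathcal{A}|/n! = O\!\left(\eta^{-1}(\error^{1/7} + n^{-1/3})\right)$. Inspection of the proof (the culmination in~\autoref{sec:culmination}) shows that $\mathcal{G}$ is in fact a disjoint union of 1-cosets $T_{ij}$ all sharing a common first index~$i$ (or, by symmetry, a common second index), which is precisely the definition of a dictatorship from \autoref{sec:background}. Hence $\mathcal{G}$ serves as the desired dictatorship $\mathcal{B}$.

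Finally, I would simplify the prefactor. Since $\min(c,1-c) \geq c(1-c)$ for every $c \in [0,1]$, we have $\eta^{-1} \leq (c(1-c))^{-1}$. Substituting $\error = O(c\delta_0)$ yields
\[
\frac{|\mathcal{A} \triangle \mathcal{B}|}{n!} \;=\; O\!\left( \frac{1}{c(1-c)}\left( (c\delta_0)^{1/7} + \frac{1}{n^{1/3}}\right) \right),
\]
as required. There is no substantial obstacle; the only mild subtlety is verifying that the family output by Theorem~\ref{thm:main} is genuinely a dictatorship (a union of 1-cosets sharing one index) rather than an arbitrary union of disjoint 1-cosets, which is immediate from the construction of $\mathcal{G}$ in the proof of \autoref{cor:main:general}.
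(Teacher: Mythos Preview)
Your proposal is correct and follows the paper's own approach exactly: the paper simply states that combining Lemma~13 of \cite{EFF1} with Theorem~\ref{thm:main} ``immediately yields'' the result, and you have spelled out that combination in detail, including the factor-of-4 conversion between the two Boolean encodings and the passage from $\eta^{-1}$ to $(c(1-c))^{-1}$.

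One small remark: the verification that $\mathcal{G}$ is a dictatorship does not actually require inspecting the proof of \autoref{cor:main:general}. Any pairwise-disjoint family of 1-cosets must all share a common first index or a common second index (two 1-cosets $T_{ij}$ and $T_{kl}$ are disjoint only when $i=k,\ j\neq l$ or $i\neq k,\ j=l$, and a short pigeonhole shows the shared index is the same across the whole family), so the conclusion of Theorem~\ref{thm:main} already gives a dictatorship by definition. Your argument via the explicit construction is of course also valid.
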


We may apply a perturbation argument similar to the one in \cite{EFF1} to prove the following strengthening of Theorem \ref{thm:stableiso}:

\begin{theorem}
There exists $ \eta_0>0$ such that the following holds. Let $\mathcal{A} \subset S_n$ with $|\mathcal{A}| = cn!$, where $\min(c,1-c) \geq \eta_0.$ If
$$
 | \partial \mathcal{A}| \leq (1-c+\delta_0)n |\mathcal{A}|
$$
then there exists a dictatorship $\mathcal{B} \subset S_n$ with 
$$
\frac{|\mathcal{A} \triangle \mathcal{B}|}{n!} = O(\delta_0).
$$
\end{theorem}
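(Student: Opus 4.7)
The plan is to bootstrap Theorem~\ref{thm:stableiso} via a perturbation argument analogous to the one in~\cite{EFF1}. First apply Theorem~\ref{thm:stableiso} to obtain a dictatorship $\mathcal{B}_0$ with $|\mathcal{A}\triangle\mathcal{B}_0|/n! = O((c\delta_0)^{1/7} + n^{-1/3})$. The conclusion is trivial if $\delta_0$ or $1/n$ is bounded below (any fixed dictatorship serves as $\mathcal{B}$), so we may freely assume $\delta_0$ is as small and $n$ as large as required, depending on $\eta_0$. Replace $\mathcal{B}_0$ with a dictatorship $\mathcal{B} = \bigsqcup_{j\in S}T_{1j}$ of size equal to $|\mathcal{A}|$ (adjusted by at most one coset) minimising $|\mathcal{A}\triangle \mathcal{B}|$ among such dictatorships. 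By minimality, $S$ consists of the $|S|$ indices $j\in[n]$ that maximise $|\mathcal{A}\cap T_{1j}|$; this rigidity will prevent pathological clustering of the discrepancy.

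The heart of the argument is a direct computation of the excess boundary. Set $\mathcal{A}^+ = \mathcal{A}\setminus\mathcal{B}$, $\mathcal{A}^- = \mathcal{B}\setminus\mathcal{A}$, $D = \mathcal{A}\triangle\mathcal{B}$, and $a = |\mathcal{A}^+| = |\mathcal{A}^-|$. Using $\partial\mathcal{A} = \partial\mathcal{B}\,\triangle\,\partial D$ together with the fact that every $\sigma\in\mathcal{A}^-$ has exactly $(|S|-1)+\binom{n-1}{2} = \Theta(n^2)$ neighbours in $\mathcal{B}$ but only $(1-c)n = \Theta(n)$ in $\mathcal{B}^c$ (with a symmetric count for $\mathcal{A}^+$), a routine bookkeeping gives
\[
|\partial\mathcal{A}| - |\partial\mathcal{B}| \;=\; a\,n(n-3) \;+\; 2\bigl(e(\mathcal{A}^+,\mathcal{A}^-)\,-\,e(\mathcal{A}^+)\,-\,e(\mathcal{A}^-)\bigr),
\]
where $e(\cdot)$ and $e(\cdot,\cdot)$ count internal and cross edges in the transposition graph. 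The leading term $a\,n(n-3) = \tfrac12 n(n-3)|D|$ is $\Theta(n^2|D|)$, so combined with the hypothesis $|\partial\mathcal{A}|-|\partial\mathcal{B}| \leq \delta_0 n|\mathcal{A}|$ it would give $|D|/n! = O(\delta_0)$ (in fact even $O(\delta_0/n)$), provided the internal-edge correction is controlled.

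The main obstacle is bounding $e(\mathcal{A}^+) + e(\mathcal{A}^-)$: if $\mathcal{A}^\pm$ were tightly concentrated inside a single 1-coset they could in principle produce enough internal edges to swamp the leading term. The coset-optimality of $\mathcal{B}$ is precisely what precludes this, since the threshold characterisation of $S$ forces a half-density condition inside every coset for at least one of $\mathcal{A}^+,\mathcal{A}^-$. Theorem~\ref{thm:diaconis} applied inside each coset $T_{1j}\cong S_{n-1}$ (to whichever of $\mathcal{A}^\pm\cap T_{1j}$ and its relative complement is smaller) then bounds the in-coset internal edges of $\mathcal{A}^\pm$ against the in-coset boundary. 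The cross-coset internal edges contribute only $O(an)$, since each vertex has merely $n-1$ transposition-neighbours outside its own 1-coset. Together with the smallness of $|D|/n!$ guaranteed by the first step and the hypothesis $\eta\geq\eta_0$, these estimates combine to yield $e(\mathcal{A}^+)+e(\mathcal{A}^-) - e(\mathcal{A}^+,\mathcal{A}^-) \leq \tfrac12 a\,n(n-3) - c_1\,an$ for some constant $c_1 = c_1(\eta_0) > 0$, so $|\partial\mathcal{A}|-|\partial\mathcal{B}| \geq c_1 n|D|$ and the theorem follows.
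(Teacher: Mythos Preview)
The paper does not give a proof of this theorem at all; it simply says ``we may apply a perturbation argument similar to the one in~\cite{EFF1}''. So your proposal is not being compared to a written argument, but to a pointer. In spirit you are doing exactly what the paper intends: bootstrap Theorem~\ref{thm:stableiso} by a direct boundary computation for the best-fitting dictatorship. Your key identity
\[
|\partial\mathcal{A}|-|\partial\mathcal{B}| \;=\; a\,n(n-3)\;+\;2\bigl(e(\mathcal{A}^+,\mathcal{A}^-)-e(\mathcal{A}^+)-e(\mathcal{A}^-)\bigr)
\]
is correct (I checked it), and choosing $\mathcal{B}$ to minimise $|\mathcal{A}\triangle\mathcal{B}|$ among equal-size axis-$1$ dictatorships is the right normalisation.

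The gap is in the last paragraph. The threshold characterisation of $S$ gives only that $M^+ + M^- \le 1$, where $M^{\pm}$ are the maximal coset-densities of $\mathcal{A}^{\pm}$; it does \emph{not} give half-density simultaneously on both sides. Even granting $x_j\le\tfrac12$ in every coset, the in-coset Diaconis bound yields $2\sum_j e_j(X_j)\le a(n-1)(n-3)$, and the trivial cross-coset bound adds up to $a(n-2)$ more. But $a\,n(n-3)-a(n-1)(n-3)-a(n-2)=-a$, so the leading terms cancel with the wrong sign and no slack of order $an$ remains. Equivalently, rewriting via $2e(D)=\binom{n}{2}|D|-|\partial D|$ gives
\[
|\partial\mathcal{A}|-|\partial\mathcal{B}| \;=\; |\partial D|-n|D|+4e(\mathcal{A}^+,\mathcal{A}^-),
\]
and Diaconis applied to $D$ (or to $\mathcal{A}^{\pm}$ separately) only yields $|\partial D|\ge(1-|D|/n!)\,n|D|$, which falls short of the needed $(1+c_1)n|D|$ when $e(\mathcal{A}^+,\mathcal{A}^-)$ is small. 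The ingredients you list --- half-density, in-coset Diaconis, $O(an)$ cross-coset edges, and the crude smallness of $|D|/n!$ from the first step --- do not combine to produce the claimed $-c_1an$ saving; something further (for instance the \emph{second} spectral gap of the transposition graph, which controls how far $\chi_{\mathcal{A}^{\pm}}$ must be from $U_1$ given that neither can be a dictator, or a sharper combinatorial argument tying cross-coset edges of $\mathcal{A}^{\pm}$ to $e(\mathcal{A}^+,\mathcal{A}^-)$) is needed to close the argument. This is presumably what the perturbation argument in~\cite{EFF1} actually supplies.
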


This is best possible up to an absolute constant factor, as can be seen by taking
$$\mathcal{A} = T_{11} \cup T_{12} \cup \ldots \cup T_{1a} \cup T_{2,a+1} \cup T_{2,a+2} \cup \ldots \cup T_{2,a+b},$$
where $\min\{a/n,1-a/n\} = \Omega(1)$ and $b/a = \Theta(\delta_0)$.

Therefore, a subset of $S_n$ with measure bounded away from 0 and 1, which has edge-boundary close to the lower bound \ref{eq:diaconisbound}, must be close in structure to a dictatorship. This is a `genuine' stability result. One may contrast it with the `quasi-stability' result in \cite{EFF1}, where we prove that a subset of $S_n$ with size $\Theta((n-1)!)$ has edge-boundary close to the minimum iff it is close in structure to a {\em union} of dictatorships, as opposed to a single dictatorship.

\section{Conclusion}

\subsection*{Remarks and open questions} 

The most obvious open question in the context of this trilogy is whether it is possible to prove the common generalization of the main theorems in all three papers. Is it true that no matter what the expectation of $f$ is, if $f$ is Boolean and close to $U_t$, then it is close to a union of $t$-cosets? This surely must be true, but our techniques fall short of proving it. We also believe the correct dependence between the two distances to be linear. We make the following conjecture.

\begin{conjecture}
Let $\mathcal{A} \subset S_n$, and let $t \in \mathbb{N}$. Let $f$ denote the characteristic function of $\mathcal{A}$, and let $f_t$ denote the orthogonal projection of $f$ onto $U_t$. If
$$\mathbb{E}[(f-f_t)^2] \leq \epsilon \mathbb{E}[f],$$
then there exists a family $\mathcal{B} \subset S_n$ which is a union of $t$-cosets, such that
$$|\mathcal{A} \triangle \mathcal{B}|  \leq C_0(\epsilon+1/n) |\mathcal{A}|,$$
where $C_0$ is an absolute constant.
\end{conjecture}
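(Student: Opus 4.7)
The natural attack on this conjecture proceeds in two stages: first, a weak stability result that locates a candidate union of $t$-cosets $\mathcal{B}$ at polynomial-in-$\epsilon$ distance from $\mathcal{A}$; second, a bootstrapping argument that exploits the combinatorial optimality of $\mathcal{B}$ to promote this to the linear bound.

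\emph{Stage 1 (weak stability).} The plan is to extend the restriction/partition framework of \autoref{sec:restrictions}--\autoref{sec:partition} by induction on $t$. Given $f$ with $\mathbb{E}[(f-f_t)^2]$ small relative to $\mathbb{E}[f]$, one looks at random restrictions to $t$-cosets $T_{IJ}$. Each $T_{IJ}$ is a copy of $S_{n-t}$, and after stripping off the `top' component, the restriction of $f$ should be close to $U_{t-1}$ inside the smaller symmetric group. The inductive hypothesis then produces a $(t-1)$-coset structure on a typical restriction, and an aggregation argument (analogous to the strong-line analysis of \autoref{sec:strong}, but on the lattice of $t$-cosets) would assemble these local structures into a global $t$-coset candidate $\mathcal{B}$. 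The main obstacle here is uniformity across density regimes: the argument in this paper uses that $\mathbb{E}[f]$ is bounded away from $0$ and $1$, while \cite{EFF1} and \cite{EFF3} use the opposite extreme; bridging the intermediate regime $\mathbb{E}[f] = \Theta(n^{-s})$ for $0 < s < t$ appears to be where genuinely new ideas are needed.

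\emph{Stage 2 (bootstrapping to a linear bound).} Let $\mathcal{B}$ minimize $|\mathcal{A}\triangle\mathcal{B}|$ over all unions of disjoint $t$-cosets (starting from the candidate of Stage 1), and write $\delta = |\mathcal{A}\triangle\mathcal{B}|/n!$. By \autoref{lem:TinU}, $\chi_\mathcal{B}\in U_t$, so setting $h = \chi_\mathcal{A}-\chi_\mathcal{B}$ and letting $h_t$ denote its projection onto $U_t$ we have $f-f_t = h - h_t$, whence
\[
\epsilon\,\mathbb{E}[f] \;\geq\; \mathbb{E}[(f-f_t)^2] \;=\; \|h\|_2^2 - \|h_t\|_2^2 \;=\; \delta - \|h_t\|_2^2.
\]
The desired conclusion $\delta \leq C_0\epsilon\,\mathbb{E}[f]$ would follow at once from a \emph{spectral-gap estimate}
\[
\|h_t\|_2^2 \;\leq\; \left(1 - \tfrac{1}{C_0}\right)\|h\|_2^2,
\]
valid whenever $h = \chi_\mathcal{A}-\chi_\mathcal{B}$ with $\mathcal{B}$ optimal. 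The proof of this inequality would proceed via \autoref{lem:TspansU}: since $U_t = \Span\{T_{IJ}\}$, it suffices to control $\langle h, T_{IJ}\rangle$ for each $t$-coset $T_{IJ}$. Optimality of $\mathcal{B}$ implies that no single $t$-coset can be added to, removed from, or swapped in $\mathcal{B}$ to reduce $\delta$; translated into Fourier language this says $|\langle h, T_{IJ}\rangle|$ is small (relative to $|T_{IJ}|/n!$) for each $I,J$. A double-counting argument over the (highly overdetermined) spanning set $\{T_{IJ}\}$, exploiting their known Gram matrix, should then convert these pointwise bounds into the global contraction of $h$ under the projection onto $U_t$.

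The principal difficulty is Stage~1: a unified weak-stability result across all densities remains open, and would already represent substantial progress. Stage~2 should be cleaner once a candidate is in hand, though the disjointness constraint on $\mathcal{B}$ complicates the local swap arguments (one cannot add a $t$-coset that intersects an existing one without first removing overlaps). Even in the special case $t=1$ treated in this paper, replacing the $\error^{1/7}$ loss of \autoref{thm:main} by a linear bound appears to require exactly this kind of combinatorial optimality/swap reasoning, which lends credence to the conjectured linear dependence being the correct answer.
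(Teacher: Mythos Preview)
The statement you are addressing is a \emph{conjecture}: the paper does not prove it, and explicitly flags it as open (``This surely must be true, but our techniques fall short of proving it''). There is therefore no paper proof to compare against, and your proposal should be read as a research programme rather than a proof.

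As a programme, your identification of the bottleneck in Stage~1 is exactly right and matches the paper's own assessment: the three papers in the trilogy handle disjoint density regimes by genuinely different arguments, and no one currently knows how to interpolate. Your inductive-restriction idea is natural but runs straight into this: restricting to a random $t$-coset $T_{IJ}$ does reduce $t$ by one, but it also shifts the density of $f|_{T_{IJ}}$ in an uncontrolled way, so the inductive hypothesis may land you in precisely the regime you cannot handle.

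Your Stage~2 reduction is clean and correct up to the displayed identity $\epsilon\,\mathbb{E}[f] \geq \delta - \|h_t\|_2^2$. However, the ``spectral-gap estimate'' $\|h_t\|_2^2 \leq (1-1/C_0)\|h\|_2^2$ for optimal $\mathcal{B}$ is not a lemma but essentially a restatement of the conjecture in dual form. Optimality of $\mathcal{B}$ does give you that each $t$-coset meets $\mathcal{A}\triangle\mathcal{B}$ in at most half its points (else swap it in or out), but converting these $\ell^\infty$-type bounds on $\langle h, T_{IJ}\rangle$ into an $L^2$ contraction of $h_t$ requires control of the frame constants of the highly redundant spanning set $\{T_{IJ}\}$ for $U_t$ --- and those constants are not bounded independently of $n$ in any obvious way. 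The ``double-counting over the Gram matrix'' step is thus the entire difficulty, not a routine calculation; without it, Stage~2 does not bootstrap anything. It is worth noting that even for $t=1$ with $c$ bounded away from $0$ and $1$ (the case of this paper), the linear dependence is \emph{not} established by \autoref{thm:main}, so your Stage~2 would already be new content there.
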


Another related question involves understanding the precise extremal isoperimetric sets in the transposition graph on $S_n$, for all set-sizes. Limor Ben Efraim conjectures that the minimum edge-boundary is always achieved by an initial segment of the lexicographical order on $S_n$. (If $\sigma,\pi \in S_n$, we say that {\em \(\sigma < \pi\) in the lexicographic order} if \(\sigma(j) < \pi(j)\), where \(j = \min\{i \in [n]:\ \sigma(i) \neq \pi(i)\}\).)

It would also be interesting to discover other groups where there is an elegant characterization of Boolean functions whose Fourier support is concentrated on certain irreducible representations.

\subsubsection*{Acknowledgment} The authors wish to thank Gil Kalai for many useful conversations, and Caf\'e Karkur for its patient support of our research.

\end{document}